\newcommand{\BIGboxplus}{%
\mathop{%
\mathchoice%
{\raise-0.15em\hbox{\Large $\boxplus$}}
{\raise-0.15em\hbox{\large $\boxplus$}}
{\hbox{\large $\boxplus$}}
{\boxplus}
}
}
\newcommand{\A}{{\mathbb A}}
\newcommand{\C}{{\mathbb C}}
\newcommand{\G}{{\mathbb G}}
\newcommand{\Q}{{\mathbb Q}}
\newcommand{\R}{{\mathbb R}}
\newcommand{\V}{{\mathbb V}}
\newcommand{\Z}{{\mathbb Z}}
\newcommand{\calo}{{\mathcal O}}
\newcommand{\cald}{{\mathcal D}}
\newcommand{\cala}{{\mathcal A}}
\newcommand{\calb}{{\mathcal B}}
\newcommand{\cale}{{\mathcal E}}
\newcommand{\calh}{{\mathcal H}}
\newcommand{\cali}{{\mathcal I}}
\newcommand{\calj}{{\mathcal J}}
\newcommand{\call}{{\mathcal L}}
\newcommand{\cals}{{\mathcal S}}
\newcommand{\calr}{{\mathcal R}}
\newcommand{\dualg}{{\widehat G}}
\newcommand{\dualm}{{\widehat M}}
\newcommand{\dualn}{{\widehat N}}
\newcommand{\n}{{\mathfrak n}}
\newcommand{\pip}{\boldsymbol{\Pi}}
\newcommand{\GO}{\mathbf{G}}
\newcommand{\fp}{\mathfrak{p}}
\newcommand{\fnn}{\mathfrak{n}}
\newcommand{\son}{SO_{2n}}
\newcommand{\dson}{\widehat{SO}_{2n}}
\newcommand{\sonv}{SO_{2n,v}}
\newcommand{\dsonv}{\widehat{SO}_{2n,v}}
\newcommand{\gnn}{GL_{2n}}
\newcommand{\so}{SO_{8}}
\def\Ddots{\mathinner{\mkern1mu\raise\p@
\vbox{\kern7\p@\hbox{.}}\mkern2mu
\raise4\p@\hbox{.}\mkern2mu\raise7\p@\hbox{.}\mkern1mu}}
\newtheorem{teo}{Theorem}[section]
\newtheorem{lema}[teo]{Lemma}
\newtheorem{prop}[teo]{Proposition}
\newtheorem{cor}[teo]{Corollary}
\theoremstyle{definition}
\newtheorem{defn}[teo]{Definition}
\newtheorem{obs}[teo]{Remark}
\theoremstyle{remark}
\newtheorem*{ack}{\bf Acknowledgements}
\begin{document}
\author{ Octavio Paniagua-Taboada}
\title{Some consequences of Arthur's conjectures for special orthogonal even groups}
\thanks{This research was partially financed by a fellowship of CONACYT, Mexico (registration number 175437) and by the DFG-grant RA 1370/2-1}
\begin{abstract}
In this paper we construct explicitly a  square integrable residual automorphic representation of the special orthogonal group $SO_{2n}$, through Eisenstein series. We show that this representation comes from an elliptic Arthur parameter $\psi$ and appears in the space $L^2(\son(\Q)\backslash\son(\A_\Q))$ with multiplicity one. Next, we consider parameters whose Hecke matrices, at the unramified places, have eigenvalues bigger (in absolute value), than those of the parameter constructed before. The main result is, that these parameters cannot be cuspidal. We establish bounds for the eigenvalues of Hecke operators, as consequences of Arthur's conjectures for $\son$. Next, we calculate the character and the twisted characters for the representations that we constructed. Finally, we find the composition of the global and local Arthur's packets associated to our parameter $\psi$. All the results in this paper are true if we replace $\Q$ by any number field $F$.
\end{abstract}
\maketitle
\tableofcontents


\section{Introduction}
A central problem in the theory of automorphic forms is the spectral decomposition of the right regular representation of $L^2\left(G(F)\backslash G(\A_F)\right)$, where $G$ is a connected reductive group defined over a number field $F$ and $\A_F$ is the  adele ring of $F$. It is known that this representation decomposes as
\begin{equation}\label{introeq1}
L^2\left(G(F)\backslash G(\A_F)\right)= L_d^2\left(G(F)\backslash G(\A_F)\right) \widehat{\oplus}\,\, L_{\text{cont}}^2\left(G(F)\backslash G(\A_F)\right)
\end{equation}
The discrete part decomposes further into the residual and the cuspidal part. James Arthur formulated in \cite{Ar} and \cite{Ar1} some very precise conjectures with the aim to describe the type of representations that should occur in the discrete spectrum. He proposes a special type of global and local parameters. Globally, these parameters are continuous homomorphisms 
\begin{equation}\label{ch3a1}
\psi: \call_F\times SL_2(\C) \to {}^LG
\end{equation}
where $\call_F$ is the conjectural Langlands group and $^LG$ is the $L$-group of $G$.  More precisely, we have conjugacy classes of these morphisms in $^LG$, with some additional conditions. By restriction of \eqref{ch3a1} to each place $v$, we have local parameters
\begin{equation}\label{ch3a2}
\psi_v: \call_{F_v}\times SL_2(\C) \to {}^LG_v,
\end{equation}
Here $\call_{F_v}$ is the local Langlands group and $^LG_v$ is the $L$-group of the extension of scalars $G_v=G\otimes F_v$. The aim of Arthur's conjectures is to describe the discrete spectrum of the classical groups from what is known for $GL_N$. Arthur reformulates the results of M\oe glin and Waldspurger (see \cite{MW2}) and gives a full conjectural description for classical groups (see \cite{Ar2}). In our work, we analyze some of the consequences of Arthur's conjectures for $\son$.

In {\sc section  $2$} we consider the split group $\son$ ($n$ is even). Next,  we construct a discrete Arthur's parameter, which appears in the residual spectrum. The construction is made through parabolic induction and we consider the Eisenstein series associated to the inducing representation 
\[
\boldsymbol{\Pi}: = \pi_1\otimes |det|^{s_1} \bigotimes \cdots \bigotimes \pi_m \otimes |det|^{s_m}
\] 
which is a representation of the Levi subgroup $M=\prod^m GL_2$ (here we set $m:= \frac{n}{2}$). We show that the automorphic form associated to our parameter appears with multiplicity one. None of the results in this section depend on Arthur's conjectures.

In {\sc section  $3$} we study some parameters of $\son$ having in their Hecke matrices, at the unramified places, eigenvalues bigger (in absolute value) than those of the parameter $\psi$. Here we use the Mackey theory for calculating the restriction of an induced representation. We show that these parameters are singular in a sense defined by Howe and Li. The consequence is that these parameters must be residual.

{\sc section  $4$} deals with  Hecke operators for $\son$. Here we establish some bounds, better than the bounds already known for the eingenvalues of Hecke operators. For calculating the estimates, we use the Satake transform of some characteristic functions. These estimates rely on Arthur's conjectures.

In {\sc section  $5$} we calculate the character and the twisted character for the parameter of {\sc section  $2$}. We consider $\son$ as a twisted endoscopic group of $GL_{2n}\rtimes \theta$, where $\theta$ is an outer automorphism. We calculate in detail the map called \emph{norm} between stable conjugacy classes in $\son$ and stable $\theta$-conjugacy classes in $GL_{2n}$. After the calculation of the characters, we show an identity of traces between this two groups, where the transfer factor appears in a very simple form.

In {\sc section  $6$} we show how the conjectural results of Arthur correspond to our results, for the parameter $\psi$ of {\sc section  $2$}. We study and describe the local objects defined by Arthur and we prove that the corresponding local representation is irreducible.

Finally, in {\sc section  $7$} we describe the global objects defined by Arthur and we give the composition of the global Arthur's packet. We show that the packet of $\psi$ cannot have cuspidal representations.  We finish with a result which  ameliorates the bounds in {\sc section  $4$}, these bounds are essentially optimal, and they depend on the results announced by  Arthur (see \cite{Ar2}).
\begin{ack}
This paper comprises the results of my Ph.D. thesis. The author would like to thank this Ph.D. advisor Laurent Clozel for his guidance and support, along all this work. Likewise, the author expresses his gratitude to Colette M\oe glin and Guy Henniart for very fruitful discussions. Finally, the author thanks Pablo Ramacher and George-August-Universit\"at G\"ottingen for their hospitality when writing this paper.
\end{ack}

\section{The construction for $SO_{2n}$}
\subsection{Notations and conventions}
The main object  in this paper is the special orthogonal group defined over $\Q$ and
associated to the quadratic form $x_1 x_{2n} +x_2 x_{2n-1}+ \cdots +
x_n x_{n+1}$.  The anti-diagonal matrix is naturally associated to this quadratic form. This group is split over $\Q$.  This Lie group has a Dynkin diagram of type $\cald_{n}$. We suppose that $n$ is even  and we set $m:=\frac{n}{2}$. We shall follow the method developed by Langlands to construct our representations through parabolic induction. We fixe a standard Levi subgroup given by choice of the natural Borel $B$ subgroup, defined over $\Q$. Write $B= TU$ where $T$ is a maximal split torus of $B$ and $U$ denotes the unipotent radical of $B$. 
     Let $P$ be 
the parabolic subgroup such that its Levi component $M$ is isomorphic to $m$ copies of  $GL_2$, generated by  the odd roots $\alpha_1, \alpha_3, \dots, \alpha_{n-1}$  in the Dynkin diagram (recall that $n$ is even).

\begin{center}
\begin{tikzpicture}
\filldraw[black] (0,0) circle (2pt)  (1,0) circle (2pt) (2.4,0) circle (0.4pt) (2.5,0) circle (0.4pt) (2.6,0) circle (0.4pt) (4,0) circle (2pt) (5,0.5) circle (2pt) (5,-0.5) circle (2pt);
\draw (0,-0.3) node{\tiny $\alpha_{1}$} (1,-0.3) node{\tiny $\alpha_{2}$} (4,-0.3) node{\tiny $\alpha_{n-2}$} (5, 0.22) node{\tiny $\alpha_{n-1}$} (5,-0.7) node{\tiny $\alpha_{n}$};
\draw (0,0) -- (1,0) --  (2,0);
\draw (3,0) -- (4,0) --  (5,0.5);
\draw (4,0) --  (5,-0.5);
\end{tikzpicture}\end{center}
Let $W_{\son}$, $W_M$ be the Weyl group of $\son$ and $M$, respectively.  We denote by $\Delta_M$ the roots whose restriction to the maximal torus
of the center of $M$ is not trivial. So $\Delta_M= \{X_i-X_j, 2X_m\mid 1\le i<j \le m\}$ and this set has exactly $m$ roots. We shall build a residual representation through Eisenstein series. Thus, in order to determine the poles of the Eisenstein series, we need to calculate the intertwining operators. Then, we define a set $W(M)$, given by
\begin{equation}\label{arteqwm}
W(M) = \{ w\in W_G \mid \text{the coset of $w$ in $W_{\son}/W_M$ has minimal length} 
\end{equation}
\[
\text{and $wMw^{-1}$ is a standard Levi subgroup} \}.
\]
 We have only one association coset, so that implies $wMw^{-1}= M$ and $W(M)$ is a true set of roots. The elements of $W(M)$ are generated by the $m$ elementary symmetries (see \cite{M-W} for the general case).

\subsection{Eisenstein series}
We shall construct for the Levi subgroup $M$ the following representation: for each  factor $GL_2$ we fix $\pi_i$  a cuspidal irreducible automorphic representation of $GL_2$. We twist these representations by the unramified character $|det|^{s_i}$, with $s_i \in \C$. Consider then the representation $\pip$ of $M$ 
\[
\pip: = \pi_1\otimes |det|^{s_1} \bigotimes \cdots \bigotimes \pi_m \otimes |det|^{s_m}
\]
  Next, we define the Eisenstein series depending on the holomorphic variables $\underline{s}=(s_1, \dots, s_m)$ for any function $\phi_{\pip}$ in the space of the representation $\pip$ as
\begin{equation}\label{eqse0}
E(\phi_{\pip}, \pip,s)(g)= \sum_{\gamma\in P(\Q)\backslash G(\Q)}\phi_{\pip}(\gamma g,s)
\end{equation}

It is known that this Eisenstein series converges in some positive hyperplane. We shall study the behavior of this Eisenstein series outside of the region of convergence. The poles of the Eisenstein series are given by those of the constant terms. And the constant terms can be determined in terms of intertwining operators. We have an intrinsic formula that allows us to fully understand the intertwining operators, given in terms of Langlands quotients.

\begin{defn}
Let $G$ be a reductive connected split group, defined over $\Q$. Let $M$ be a Levi subgroup of $G$. Assume we have a subgroup $M_\alpha \subset M$, such that $M_\alpha$ has rank $1$ in $M$. Let $T_M$ be the maximal torus of the center of $M$ and $\alpha : T_M \to \G_m$ be the restriction to $T_M$ of the only root of $G$ non-trivial on $T_M$. Let $\rho$ be a cuspidal representation of $M$ and let $r_\alpha$ be representation of the dual group $\dualm$ associated to $\rho$. We define, whenever it converges, the \emph{partial Langlands quotient}, $Q^S(\pi,\alpha)$, as
\begin{equation}\label{artlq1.0}
Q^S(\pi,\alpha):=\frac{L^S(\pi,r_\alpha)}{L^S(\pi, r_\alpha)[1]}
\end{equation}
\end{defn}
Now we have this well known proposition.

\begin{prop}\label{pcm}
Let $w\in W(M)$. The partial intertwining operator (for the unramified places) associated to it is given by:
\[
M^S(w,\pi)=\prod_{\substack{\alpha > 0 \\ w\alpha <0}}Q^S(\pi,\alpha), \qquad \alpha \in R_{ind}(T_M,\son),
\] 
where $R_{ind}(T_M,\son)$ denotes the indivisible roots of $\son$ with respect to $T_M$.
\end{prop}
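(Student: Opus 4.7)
The plan is the standard three-step reduction: factor $M^S(w, \pi)$ into a product of rank-one intertwining operators, evaluate each rank-one operator on the spherical vector at an unramified place via Gindikin--Karpelevich, and assemble the resulting local L-factor ratios into global Langlands quotients $Q^S(\pi, \alpha)$.

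First, fix a reduced expression $w = s_{\beta_1} \cdots s_{\beta_k}$ with $k = \ell(w)$, where each $s_{\beta_i}$ is the reflection in an indivisible root $\beta_i$ at the corresponding intermediate step. Repeated application of the cocycle identity
\[
M^S(w_1 w_2, \pi) = M^S(w_1, w_2\pi) \cdot M^S(w_2, \pi)
\]
decomposes $M^S(w, \pi)$ as a product of $k$ rank-one factors, one per indivisible root in $\{\beta_1,\, s_{\beta_1}\beta_2,\, \ldots\}$. Standard combinatorics of reduced decompositions identifies this set with $\{\alpha > 0,\ w\alpha < 0\} \cap R_{ind}(T_M, \son)$. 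The condition $w \in W(M)$, and in particular $wMw^{-1}=M$, guarantees that at each intermediate stage the Levi remains conjugate to $M$, so that each rank-one operator is exactly of the type considered in the definition of $Q^S$.

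Second, at an unramified place $v$ the local rank-one intertwining operator acts on the spherical vector by multiplication by the Gindikin--Karpelevich scalar
\[
\frac{L_v(\pi_v, r_\alpha)}{L_v(\pi_v, r_\alpha)[1]},
\]
where $r_\alpha$ is the natural representation of $\widehat M$ on the $\alpha$-root space, and the shift $[1]$ reflects the modulus-character contribution in the classical integral. Divisible roots do not produce additional factors because their root subgroups are absorbed into those of their indivisible halves; this is the source of the restriction to $R_{ind}$ in the statement. Taking the product over all $v \notin S$ converts each local scalar into the global ratio $Q^S(\pi, \alpha)$ of \eqref{artlq1.0}.

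Assembling the two steps yields the formula in the proposition. The main obstacle is bookkeeping: one must track how the intermediate twists of $\pi$ appearing in the cocycle factorization recombine to produce the global L-factors $L^S(\pi, r_\alpha)$ attached to the \emph{original} cuspidal representation, and verify that each $r_\alpha$ arising from Gindikin--Karpelevich coincides with the representation of $\widehat M$ entering the definition of $Q^S$. Both verifications are standard and are carried out in the general treatment of \cite{M-W}.
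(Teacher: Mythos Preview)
Your sketch is the standard argument and is correct in outline: factor via a reduced decomposition using the cocycle relation, apply Gindikin--Karpelevich at each unramified place to obtain the local $L$-ratio, and take the product over $v\notin S$. This is precisely how the result is established in the literature (Langlands, Shahidi, and the general treatment in \cite{M-W}).

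Note, however, that the paper does not actually give a proof of this proposition: it is introduced with the phrase ``Now we have this well known proposition'' and stated without argument, serving only as input to Corollary~\ref{ccm}. So there is no ``paper's own proof'' to compare against; your proposal supplies exactly the standard justification the paper is implicitly invoking. One minor point worth tightening: the claim that ``divisible roots do not produce additional factors because their root subgroups are absorbed'' is slightly imprecise as stated---in a root system of type $D_n$ there are no divisible roots to begin with, so in the present setting the restriction to $R_{ind}$ is vacuous. The general formulation with $R_{ind}$ is there to cover cases (such as restricted root systems of type $BC$) that do not arise here.
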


With this proposition it is easy to find the poles of these $L$-functions. Now, we make the additional hypothesis that $\pi_1= \pi_2=\cdots = \pi_m:= \pi$, i.e. we assume that all the representations $\pi_i$ are the same and we denote this representation by $\pi$.
\begin{cor}\label{ccm}
For the special orthogonal (split) group $SO_{2n}$ (we set $m:= \frac{n}{2}$), with our data, $M$ the Levi subgroup defined above, $M_\alpha, \pi$, let $\underline{s}= (s_1,\dots, s_n)$. The formula for the partial intertwining operator is given by 
\[
 M^S(w,\underline{s})=
\prod_{\substack{i <j\\ w(\epsilon_i-\epsilon_j) <0}}\frac{L^S(\pi\otimes\pi^\vee, s_i-s_j)}{L^S(\pi\otimes\pi^\vee, s_i-s_j+1)}\times\prod_{\substack{i < j\\ w(\epsilon_i+\epsilon_j) <0}}\frac{L^S(\pi\otimes\pi, s_i+s_j)}{L^S(\pi\otimes\pi, s_i+s_j+1)}\times
\]
\[
\times\prod_{w(2\epsilon_i)<0}\frac{\zeta^S(2s_i)}{\zeta^S(1+2s_i)}
\]
\end{cor}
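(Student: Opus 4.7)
The plan is to specialize Proposition~\ref{pcm} to $G=\son$, the Levi $M\cong\prod_{i=1}^m GL_2$, and the representation $\pip=\bigotimes_{i=1}^m \pi\otimes|\det|^{s_i}$ introduced above. Proposition~\ref{pcm} writes $M^S(w,\pip)$ as a product, over indivisible positive roots $\alpha\in R_{ind}(T_M,\son)$ with $w\alpha<0$, of partial Langlands quotients $Q^S(\pi,\alpha)=L^S(\pip,r_\alpha)/L^S(\pip,r_\alpha)[1]$. The proof therefore has two parts: (i) enumerate the indivisible positive roots of $\son$ relative to the central torus $T_M$, and (ii) identify the representation $r_\alpha$ of the dual Levi $\dualm=\prod_{i=1}^m GL_2(\C)$ attached to each such $\alpha$.

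For (i), since $M$ is generated by the odd simple roots $\alpha_1,\alpha_3,\dots,\alpha_{n-1}$, the characters $\epsilon_{2i-1}$ and $\epsilon_{2i}$ of $T$ coincide upon restriction to $T_M$; write $\tilde\epsilon_i$ for the common restriction. The roots of $\son$ are $\pm\epsilon_a\pm\epsilon_b$ with $a\neq b$, and those non-vanishing on $T_M$ restrict to $\pm(\tilde\epsilon_i-\tilde\epsilon_j)$ and $\pm(\tilde\epsilon_i+\tilde\epsilon_j)$ for $1\le i<j\le m$, together with $\pm 2\tilde\epsilon_i$ (coming from $\pm(\epsilon_{2i-1}+\epsilon_{2i})$). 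All of these are indivisible (the restricted system is of type $C_m$), so the three families of positive indivisible roots match exactly the three products appearing in the statement.

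For (ii), one identifies $r_\alpha$ by examining the adjoint action of $\dualm$ on each restricted root space of $\mathrm{Lie}(\dson)$. When $\alpha=\tilde\epsilon_i-\tilde\epsilon_j$ the attached representation is $\rho_i\otimes\rho_j^\vee$, with $\rho_k$ the standard representation of the $k$-th factor $GL_2(\C)$; after absorbing the twists $|\det|^{s_i},|\det|^{s_j}$ built into $\pip$, this produces the Rankin--Selberg factor $L^S(\pi\otimes\pi^\vee,s_i-s_j)$. When $\alpha=\tilde\epsilon_i+\tilde\epsilon_j$ one finds $\rho_i\otimes\rho_j$, giving $L^S(\pi\otimes\pi,s_i+s_j)$. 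For the long root $2\tilde\epsilon_i$ the representation is the one-dimensional determinant character of the $i$-th factor, contributing $\zeta^S(2s_i)$ under the standing trivial-central-character hypothesis on $\pi$.

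Assembling the three families via $Q^S(\pi,\alpha)=L^S(\pip,r_\alpha)/L^S(\pip,r_\alpha)[1]$ yields the displayed formula, the condition $w\alpha<0$ being exactly the index constraint written beneath each $\prod$. The only step with any content is the long-root identification for $2\tilde\epsilon_i$, since the short-root cases reduce immediately to the familiar Rankin--Selberg setup for $GL_2\times GL_2$; concretely, one checks that the line $e_{2i-1}\wedge e_{2i}\subset \wedge^2 V$ (with $V$ the standard representation of the Siegel $GL_n$-Levi containing $M$) carries the $\dualm$-action by $\det$ on the $i$-th factor, a direct verification using the anti-diagonal realization of the quadratic form fixed in the conventions above.
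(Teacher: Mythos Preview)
Your argument is correct and is exactly the specialization the paper intends: the paper states this corollary without proof, leaving the identification of the restricted root system (type $C_m$) and of the three families $r_\alpha$ (Rankin--Selberg $\rho_i\otimes\rho_j^\vee$, $\rho_i\otimes\rho_j$, and $\det$ on the $i$-th factor) implicit as a routine unpacking of Proposition~\ref{pcm}. Your care in flagging the trivial-central-character hypothesis needed for the $\zeta^S(2s_i)$ factor is well placed, since the paper only invokes self-duality of $\pi$ later and somewhat informally.
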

This result allows us to localize the poles of the intertwining operators at the point $s_0$. There will be exactly $m$ terms having a simple pole: the quotient of $L$-functions $\frac{L^S(\pi\otimes\pi^\vee, s_1-s_2)}{L^S(\pi\otimes\pi^\vee,1+ s_1-s_2)},\dots, \frac{L^S(\pi\otimes\pi^\vee, s_{m-1}-s_m)}{L^S(\pi\otimes\pi^\vee, s_{m-1}-s_m+1)}$ and $\frac{\zeta^S(2s_m)}{\zeta^S(1+2s_m)}$. From this, we  can modify the Eisenstein series and remove all the poles.
\begin{prop}
Consider the point $s_0= (\frac{n-1}{2},\dots,\frac{3}{2},\frac{1}{2})$. The modified Eisenstein series $E^*(\phi_{\pip},\underline{s}):= (s_1-s_1^0)(s_2-s_2^0)\cdots(s_m-s_m^0)E(\phi_{\pip},\underline{s})$ is holomorphic and non-zero in $s_0$. The only non-zero constant term of $E^*(\phi_{\pip},\underline{s})$ corresponds to the element $w_0= 1(-1,\dots,-1)$. 
\end{prop}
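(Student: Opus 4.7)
The plan is to compute the constant term of the Eisenstein series via the standard decomposition into intertwining operators, locate all its singularities at $s_0$ using Corollary~\ref{ccm}, and then show that after multiplication by $\prod_i(s_i-s_i^0)$ only the term indexed by $w_0$ survives.

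First, by the Langlands theory of Eisenstein series built from cuspidal data, the constant term of $E(\phi_{\pip},\underline{s})$ along any parabolic not associate to $P$ vanishes identically. Since by construction $M$ forms a single association class, the only non-trivial constant term is along $P$ itself, given by
\[
E(\phi_{\pip},\underline{s})_P=\sum_{w\in W(M)} M(w,\pip)\phi_{\pip}.
\]
Invoking Corollary~\ref{ccm}, I would identify the singular factors of each $M(w,\pip)$ at $s_0=(\tfrac{n-1}{2},\dots,\tfrac{1}{2})$. These are exactly the ``adjacent'' quotients
\[
\frac{L^S(\pi\otimes\pi^\vee,s_i-s_{i+1})}{L^S(\pi\otimes\pi^\vee,s_i-s_{i+1}+1)}\quad(i<m),\qquad\frac{\zeta^S(2s_m)}{\zeta^S(1+2s_m)},
\]
each contributing a simple pole along the hyperplane $L_i=0$, where $L_i:=s_i-s_{i+1}-1$ for $i<m$ and $L_m:=2s_m-1$. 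All other factors of $M^S(w,\underline{s})$ are holomorphic and non-vanishing at $s_0$ by standard properties of the Rankin--Selberg $L$-functions $L^S(\pi\otimes\pi^\vee,s)$, $L^S(\pi\otimes\pi,s)$ and of $\zeta^S(s)$ on $\Re(s)\ge 1$. Hence the pole order of $M(w,\pip)$ at $s_0$ equals the number of ``key'' positive roots in $\{\epsilon_i-\epsilon_{i+1}\colon i<m\}\cup\{2\epsilon_m\}$ sent to negative roots by $w$.

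Next, a short combinatorial argument identifies $w_0$ as the unique element of $W(M)\simeq W(C_m)$ flipping all $m$ key roots. Writing $w=(\sigma,\pi)$ as a signed permutation, the condition $w(2\epsilon_m)<0$ forces $\sigma_m=-1$; then descending through the conditions $w(\epsilon_i-\epsilon_{i+1})<0$ forces $\sigma_i=-1$ and $\pi(i)<\pi(i+1)$. The latter gives $\pi=\mathrm{id}$, so $w=w_0=1\cdot(-1,\dots,-1)$. Every other $w\in W(M)$ therefore gives pole order strictly less than $m$.

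Finally, I would conclude. Using $s_i^0-s_{i+1}^0=1$ and $2s_m^0=1$, the forms $(L_i)$ and $(t_i):=(s_i-s_i^0)$ are related by the invertible linear change of coordinates $L_i=t_i-t_{i+1}$ for $i<m$, $L_m=2t_m$; in particular $L_1,\dots,L_m$ is a regular system of parameters at $s_0$. Multiplying $M(w_0,\pip)$ by $\prod_i(s_i-s_i^0)$ therefore regularises the $m$-fold pole and produces a non-zero holomorphic value at $s_0$, equal up to a non-zero constant to
\[
\prod_{i=1}^{m-1}\mathrm{Res}_{s=1}L^S(\pi\otimes\pi^\vee,s)\cdot\mathrm{Res}_{s=1}\zeta^S(s),
\]
times the non-vanishing regular factors of $M^S(w_0,\pip)$ at $s_0$; all these residues are non-zero by cuspidality of $\pi$ and the simple pole of $\zeta^S$ at $s=1$. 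For every $w\ne w_0$ the pole order is at most $m-1$, so the same multiplication produces a function vanishing at $s_0$, which establishes the second claim. The principal technical obstacle is the careful multivariable book-keeping of the fourth step: one must check, via an iterated-residue argument (or equivalently via the linear substitution between $(t_i)$ and $(L_i)$), that multiplication by the ``coordinate factors'' really does play the same regularising role as multiplication by $L_1\cdots L_m$, and that for $w\ne w_0$ the lower-order poles are genuinely killed rather than merely reduced. Everything else reduces to standard analytic results.
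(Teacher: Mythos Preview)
Your proposal is correct and follows essentially the same approach as the paper, which in fact does not spell out a proof of this proposition at all but only records, in the sentence preceding it, that exactly the $m$ ``adjacent'' quotients $\frac{L^S(\pi\otimes\pi^\vee,s_i-s_{i+1})}{L^S(\pi\otimes\pi^\vee,s_i-s_{i+1}+1)}$ and $\frac{\zeta^S(2s_m)}{\zeta^S(2s_m+1)}$ carry simple poles at $s_0$. Your argument makes this implicit reasoning precise: the identification $W(M)\simeq W(C_m)$, the combinatorial characterisation of $w_0$ as the unique signed permutation flipping all $m$ key simple roots, and the observation that the linear forms $L_i$ and $t_i=s_i-s_i^0$ differ by an invertible linear change (so $\prod_i t_i$ and $\prod_i L_i$ agree up to a unit in the local ring at $s_0$) are exactly what is needed and are not written out in the paper. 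One small point worth flagging in your write-up: the passage from the partial operator $M^S(w,\underline{s})$ to the full intertwining operator requires holomorphy and non-vanishing at the ramified and archimedean places, which the paper establishes only in the \emph{following} proposition (Proposition~\ref{hubunavez}); you use this implicitly, so a forward reference would be appropriate.
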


Once we have built this modified Eisenstein series, we need to determine if  it is square integrable. For this we calculate the cuspidal exponents. To the representation $\pip$ we can associate an element of the lie algebra $\mathfrak{a}_M^{*}$ called the real part, denoted by $Re\,\pip$. We use now a result of Langlands (see \cite{M-W} \S I.4.11 lemma, page 74) to determine that this Eisenstein series is square integrable.

We shall prove that the intertwining operators are holomorphic in the ramified and archimedian places, i.e. the poles of the Eisenstein series are global. Indeed, we have the following result.

\begin{prop}\label{hubunavez}
The intertwining operator for the representation $\pip$ of $\son$, is holomorphic and non-zero for the archimedian and unramified places.
\end{prop}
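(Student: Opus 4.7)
\smallskip

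\noindent\emph{Proof proposal.} The plan is to argue place by place, using the local version of the factorization in Corollary~\ref{ccm}. On the spherical (resp.\ $K_\infty$-finite) vector in the induced representation, the local intertwining operator $M_v(w_0,\pip_v,\underline{s})$ attached to the distinguished Weyl element $w_0 = 1\cdot(-1,\ldots,-1)$ of the preceding proposition factors into ratios of local $L$-factors
\[
\frac{L_v(\pi_v\otimes\pi_v^\vee,\,s_i-s_j)}{L_v(\pi_v\otimes\pi_v^\vee,\,s_i-s_j+1)},\quad
\frac{L_v(\pi_v\otimes\pi_v,\,s_i+s_j)}{L_v(\pi_v\otimes\pi_v,\,s_i+s_j+1)},\quad
\frac{\zeta_v(2s_i)}{\zeta_v(1+2s_i)},
\]
indexed by the pairs $(i,j)$ and indices $i$ selected by $w_0$. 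At $s_0=(\tfrac{n-1}{2},\ldots,\tfrac{1}{2})$ the numerator arguments $s_i-s_j=j-i$, $s_i+s_j=n-i-j+1$, and $2s_i=n-2i+1$ are positive and bounded below by $1$, while the denominator arguments are $\geq 2$ and therefore lie in the region of absolute convergence of the relevant local $L$-series, where each factor is automatically finite and non-vanishing.

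At an unramified finite place $v$, each local factor $L_v$ is the reciprocal of a polynomial in $q_v^{-s}$ whose roots $\beta$ are products of Satake parameters of $\pi_v$. Such a reciprocal polynomial never vanishes and has a pole only where $q_v^s=\beta$. The Jacquet--Shalika bound $|\alpha_v|<q_v^{1/2}$ on the $GL_2$ Satake parameters of the cuspidal $\pi_v$ (or the sharper Kim--Sarnak bound $q_v^{7/64}$) gives $|\beta|<q_v$, whereas $|q_v^{s'}|\geq q_v$ at every numerator argument $s'$ at $s_0$. No pole can therefore occur, and combined with the non-vanishing of every Euler factor one concludes that the local unramified intertwining operator is holomorphic and non-zero at $s_0$.

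At the archimedean place, the local $L$-factors are explicit products of $\Gamma_{\R}$ and $\Gamma_{\C}$ factors determined by the archimedean Langlands parameters of $\pi_\infty$. Since $\Gamma$ has no zeros and poles only at non-positive integers, and since the unitarity of $\pi_\infty$ forces the real parts of the parameters occurring in $\pi_\infty\otimes\pi_\infty^\vee$ and $\pi_\infty\otimes\pi_\infty$ to be strictly less than $1$ in absolute value, all Gamma-arguments at $s_0$ have positive real part; hence every archimedean factor is finite and non-zero at $s_0$. Meromorphy of $M_v(w_0,\pip_v,\underline{s})$ together with finiteness at $s_0$ then gives holomorphy and non-vanishing.

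The main obstacle is precisely this archimedean verification, which requires a case-by-case bookkeeping of Gamma factors depending on whether $\pi_\infty$ is a principal series or a discrete series of $GL_2(\R)$; a more conceptual alternative is to invoke the Langlands--Shahidi normalization, ensuring holomorphy of the normalized archimedean intertwining operator away from the explicit reducibility points listed by Shahidi, none of which coincide with $s_0$ under our hypotheses.
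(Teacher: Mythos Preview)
Your approach via explicit local $L$-factor ratios on the spherical line is genuinely different from the paper's. The paper first factors the long Weyl element as a product of elementary reflections and thereby reduces the intertwining operator to a composition of rank-one operators, each taking place inside $GL_2\times GL_2\subset GL_4$ or $GL_2\times GL_2\subset GL_2\times SO_4$. For these rank-one pieces the paper then splits into two cases according to whether $\pi_v$ is tempered or not. In the tempered case the Langlands quotient theorem (Silberger, Borel--Wallach in the $p$-adic setting; Langlands in the real setting) gives holomorphy and non-vanishing directly: the exponents are strictly positive, so the standard operator realizes the projection onto the unique irreducible quotient. In the non-tempered case $\pi_v$ is unramified up to a quadratic twist, and only then does the paper carry out the spherical-vector $L$-ratio computation you describe, supplemented by a domination bound $|\phi_{\pip_v}|\le C|e_v|$ to pass from the spherical line to the full induced space.

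Your argument has a real gap at the archimedean tempered place. When $\pi_\infty$ is a discrete series of $GL_2(\R)$ there is no spherical vector, and the Gindikin--Karpelevich-type identity expressing the operator as a product of $L$-ratios is simply not available as stated; for general $K_\infty$-finite vectors the intertwining integral is not given by those Gamma quotients alone. You flag this as ``the main obstacle'' and point to Langlands--Shahidi normalization, but that is a deferral rather than a proof: the normalized operator itself can have poles, and one would still need to locate the reducibility points and exclude $s_0$. The paper avoids the whole issue by invoking the Langlands quotient theorem, which needs only temperedness of $\pi_v$ and strict positivity of the exponent. A smaller but related omission occurs already at the unramified finite places: finiteness of the $L$-ratios gives holomorphy on the spherical vector, not on the whole space, and you do not supply the domination step that the paper makes explicit.
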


\begin{proof}
We have to distinguish between two cases. Suppose that the local representation $\pi_v$ is tempered. Let $\omega \in M(W)$ (\emph{cf}. equation \eqref{arteqwm}) and we set $M_v(w,\pip)$ for the local intertwining operator. The element $\omega$ admits a minimal non-unique factorization $w=w_{i_1}\cdots w_{i_k}$. Thus, the operator has also a decomposition $M_v(w,\pip)$ into $k$ factors. Each factor satisfies the conditions of the Langlands quotient theorem (\cite{L1}) either for the parabolic subgroup whose Levi subgroup is isomorphic to $\prod^m GL_2 \subset GL_2 \prod^{m-2} GL_2$, or for the parabolic subgroup $\prod^m GL_2 \subset SO_4 \prod^{m-1} GL_2$. In fact, we can consider only the intertwining operator restricted to the subgroup $GL_2\times GL_2 \subset GL_4$ or $GL_2\times GL_2 \subset GL_2\times SO_4$, since the intertwining operator is constant on the other copies of $GL_2$. So we can reduce the problem  to the case
\[
Ind_{GL_2\times GL_2}^{P}(\omega\otimes |\;|^s) 
\]
where $P$ is $SO_4\times GL_2$ or $GL_4$, $\omega$ is an unramified unitary character and $s$ real and $>0$. Silberger \cite{Sil} and Borel-Wallach \cite{B-W} in the $p$-adic case and Langlands in the real case show the convergence and the holomorphy of the intertwining operator. Moreover, that the intertwining operator is non-zero is a consequence of the above fact, since its image is isomorphic to Langlands quotient that is irreducible.

Suppose now that the local representation $\pi_v$ is non-tempered. This means that the representation is unramified modulo a twist by quadratic character. As in the previous case, we can consider the intertwining operator only on the Levi subgroup $M'$, where $M'= GL_4$ or $M'= SO_4\times GL_2$. Recall that the representation $\pi$ of $GL_2$  is self-dual. Then, this implies that its local $v$-component, the representation $\pi_v$, is actually an induced representation
\[
\pi_v=Ind_B^{GL_2}(|\,|^\sigma,|\;|^{-\sigma})
\]
where $B$ is the standard Borel subgroup of $GL_2$ and $\sigma = Re(s) < \frac{1}{2}$. Therefore, we can consider an induction in stages and the intertwining operator acting in this induced representation. It is an induced representation of $2m$ characters $\chi_1, \dots, \chi_{2m}$. Thus the intertwining operator is realized as
\[
M_v(w,\pip)\phi_{\pip}:Ind_{B_{M^\prime}}^{M^\prime}(\chi_1, \dots,\chi_{2m})\to Ind_{B_{M^\prime}}^{M^\prime}(w\chi_1, \dots,w\chi_{2m})
\]
Thus, for $\pi_v$ unramified, there exists a vector $e_v$ fixed by the maximal compact subgroup $K_{M^\prime}$. For this vector we can restrict the intertwining operator to $K_{M^\prime}/K_{M^\prime}\cap B_{M^\prime}$. In this case Langlands' calculations give a form of the intertwining operator as a quotient of $L$-functions $\frac{L(\sigma)}{L(\sigma+1)}$. Since $\sigma < \frac{1}{2}$, this quotient converges absolutely and thus the intertwining operator is holomorphic. Remark also that these $L$-functions are non-zero, so that the intertwining operator is non-zero. For an arbitrary function $\phi_{\pip}$ on the space of $\pip$ we have $|\phi_{\pip_v}|  \le C|e_v|$, so the convergence for the spherical vector $e_v$ implies the convergence in general for $\phi_{\pip}$.  For $\pi_v$ non-tempered and ramified, we have $\pi_v = \pi_0 \otimes \epsilon_v$ where $\epsilon_v$ is a quadratic ramified character and $\pi_0$ is an unramified representation. In that case, we have
\[
Ind_{(GL_2)^m}^{\son}(\pi_v\otimes s)= Ind_{(GL_2)^m}^{\son}(\pi_0\otimes \epsilon_v\otimes s)=Ind_{(GL_2)^m}^{\son}(\pi_0\otimes s)\bigotimes \epsilon_v
\]
So, by transport of structure, we have the holomorphy and non-vanishing of the operator since the action of $\epsilon_v$ does not affect the calculations.
In the infinite places the argument is the same, namely the Langlands quotient theorem in the tempered case. For the non-tempered case we will have a quotient of $\Gamma$ functions $\frac{\Gamma(s)}{\Gamma(s+\sigma)}$ which are holomorphic functions and non-zero  for $Re(s) > 0$.
\end{proof}

Next we prove the existence of a unique quotient for the global induced representation.
\begin{prop}\label{uniqquotient}
The global representation $I:=Ind_P^{\son}(\pip)$ has a unique  irreducible quotient.
\end{prop}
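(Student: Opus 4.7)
The plan is to reduce the global uniqueness statement to a local one, and then to invoke the Langlands quotient theorem at each place. Writing $I=\bigotimes'_v I_v$ with $I_v=Ind_{P(F_v)}^{\son(F_v)}(\pip_v)$, any irreducible admissible quotient $J$ of $I$ factors, by the factorization theorem for admissible representations of $\son(\A)$ (Flath), as a restricted tensor product $\bigotimes'_v J_v$ with each $J_v$ an irreducible quotient of $I_v$. So global uniqueness will follow from uniqueness at every place, provided that the tensor product of the local Langlands quotients is itself the unique irreducible global quotient, which is automatic once we have $J_v$ unique at every $v$.

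For the local statement I would verify, at each place $v$, that the induction datum $\pip_v$ evaluated at $s_0=(\frac{n-1}{2},\ldots,\frac{3}{2},\frac{1}{2})$ lies in Langlands position, following the three-case analysis in the proof of Proposition~\ref{hubunavez}. When $\pi_v$ is tempered, $\pi_{1,v}\otimes\cdots\otimes\pi_{m,v}$ is tempered on $M(F_v)$ and the exponent $(s_1,\ldots,s_m)$ is strictly decreasing with strictly positive entries; pairing against $\Delta_M=\{X_i-X_j,2X_m\}$ shows that $\pip_v$ sits in the open positive chamber of $\mathfrak{a}_M^{*}$ associated to $P$. The Langlands quotient theorem (\cite{L1}, \cite{Sil}, \cite{B-W}) then supplies a unique irreducible quotient $J_v$ of $I_v$. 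When $\pi_v$ is unramified non-tempered, I would induce in stages to realize $I_v$ as a principal series from the Borel with exponents $(s_1+\sigma, s_1-\sigma, s_2+\sigma,\ldots,s_m+\sigma, s_m-\sigma)$, where $0<\sigma<\frac{1}{2}$. The inequalities $s_i-s_{i+1}=1>2\sigma$ together with $s_m-\sigma>0$ and $(s_m+\sigma)+(s_m-\sigma)=2s_m=1>0$ show the sequence is strictly decreasing and satisfies the extra $\cald_n$-chamber condition on the last two entries, so $I_v$ is again in Langlands position and admits a unique Langlands quotient. The ramified non-tempered case $\pi_v=\pi_0\otimes\epsilon_v$ reduces to the unramified one by transport of structure through the quadratic twist $\epsilon_v$, exactly as in Proposition~\ref{hubunavez}, and the archimedean places are handled identically via the real Langlands classification.

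Assembling the local quotients, $\bigotimes'_v J_v$ is then the unique irreducible quotient of $I$. The principal obstacle is the careful bookkeeping needed in the non-tempered case: one must check that the interleaved exponents still fall strictly inside the open Weyl chamber of $\cald_n$ (both the $X_i-X_j$ conditions and the boundary condition on the pair $(s_m+\sigma,s_m-\sigma)$), so that no standard module degenerations occur and Langlands' theorem really produces a single quotient. Once that is checked, the passage from local to global uniqueness via Flath factorization is formal.
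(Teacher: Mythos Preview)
Your argument is correct, and at the tempered places it coincides with the paper's. For non-tempered unramified $\pi_v$, however, the paper takes a different route: instead of verifying that the Borel induction sits in Langlands position, it invokes Kato's criterion (Theorem~\ref{kat}) to show that the spherical vector $e_v$ is cyclic in $I_v$; since $\dim I_v^{K_v}=1$, every irreducible quotient of $I_v$ must then be spherical, and the spherical subquotient being unique forces uniqueness of the irreducible quotient. The paper moreover uses the already-constructed nonzero Eisenstein map $E^*$ to identify this quotient concretely as the image of $E^*$, appealing to a result of Labesse to reconcile the Kato and Langlands quotients at the tempered places. Your approach is more direct and self-contained: the inequalities $s_i-s_{i+1}=1>2\sigma>0$, $s_m-\sigma>0$, and $t_{n-1}+t_n=2s_m=1>0$ do place the interleaved exponents strictly inside the open $\cald_n$ chamber, so the Langlands classification applies to the full principal series and neither Kato's theorem nor the Eisenstein map is needed. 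What the paper's argument buys in exchange is that it simultaneously exhibits the unique quotient as $E^*\bigl(Ind_P^{\son}(\pip)\bigr)$, which is the form used in Theorem~\ref{tchin2}.
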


\begin{proof}
It is well known that the global representation $I:= Ind_P^{\son}(\pip)$ decomposes into a tensor product of local representations $I= \otimes_v I_v$. So, it suffices to show that each local factor has an irreducible quotient. We already know that if the local component $\pi_v$ of the inducing representation is tempered we have Langlands quotient theorem, so $I_v$ has an unique irreducible quotient. Thus, we have to deal only with the case where $\pi_v$ is non tempered. We can suppose then, that $\pi_v$ is  unramified by transport of structure. We shall verify that in that case, the representation $I_v$ is cyclic generated by its spherical vector  $e_v$. Then consider $e_v \otimes \phi^v$ where $\phi^v \in \otimes_{w\neq v} I_w$. Consider the image of $I_v\otimes \phi^v$ by the modified Eisenstein series $E^*$. This image is totally reducible, i.e. of the form $\oplus \rho_i$ with $\rho_i$ irreducible. Moreover, we have $I_v^{K_v}= \C e_v$, where
$K_v=\son(O_v)$. Thus, exactly one of the representations $\rho_i$ is spherical, say $\rho_0$ and by cyclicity, $E^*(I_v)= \rho_0$. So globally $E^*$ is zero over the Langlands submodules (kernel of the arrow to Langlands quotient) for $v$ tempered and over the submodules of Kato (kernel of the arrow to the spherical quotient). So, the image of $E^*$ is an irreducible module, which is a tensor product of the Langlands and Kato modules which are equal in the tempered case after a result of Labesse \cite{Lab}.
\end{proof}

We only need to verify that our induced representation $I$ satisfies the conditions of Kato's theorem. Kato defines for an unramified character $\chi$ the following coefficients
\[
c_\alpha(\chi)=\frac{1-\chi(a_\alpha) q^{-1}}{1-\chi(a_\alpha)}=\frac{e_\alpha(\chi)}{d_\alpha(\chi)}
\]
where $a_\alpha$ is the coroot associated to $\alpha$ and $q$ is the cardinality of the residual field. Likewise Kato defines the vector
\[
e(\chi)=\prod_{\alpha> 0}e_\alpha(\chi)
\]
The subgroup $W_\chi$ denotes the stabilizer of $\chi$ in the Weyl group $W$, and $W(\chi)$ is the subgroup 
\[
W(\chi)= \left< w_\alpha \mid d_{\alpha}(\chi)=0\quad and \quad\alpha> 0\right>
\]
Now, we have this theorem due to Kato, see \cite{Kat}.
\begin{teo}[Kato]\label{kat}
Let $\chi$ be an unramified character of a reductive group. Then,  
\begin{enumerate}
\item The principal series $I_{\chi}$ is irreducible if and only if
\begin{enumerate}
\item $e(\chi)e(\chi^{-1})\neq 0$.
\item $W_{\chi}= W(\chi)$.
\end{enumerate}
\item The spherical vector is cyclic in $I_\chi$ in and only if
\begin{enumerate}
\item $e(\chi)\neq 0$.
\item $W_\chi=W(\chi)$
\end{enumerate}
\end{enumerate}
\end{teo}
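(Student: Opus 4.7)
The plan is to reduce the irreducibility and cyclicity questions to a sequence of rank-one problems via parabolic induction in stages, and then to apply the Gindikin--Karpelevich formula to track the spherical vector through the standard intertwining operators. First, I would write any $w\in W$ as a reduced product of simple reflections $w=s_{\alpha_{i_{1}}}\cdots s_{\alpha_{i_{k}}}$, and decompose the global intertwining operator $A(w,\chi):I_{\chi}\to I_{w\chi}$ accordingly into rank-one factors $A(s_{\alpha},\chi')$ for the successive intermediate characters $\chi'$. Each such factor is the intertwiner of a principal series for a Levi subgroup of type $SL_{2}$ sitting inside a maximal parabolic, whose zero/pole behaviour on the spherical line is controlled exactly by $e_{\alpha}(\chi')$ and $d_{\alpha}(\chi')$.

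The key computational tool is the Gindikin--Karpelevich identity
\[
A(w,\chi)\,e_{\chi}\;=\;\Bigl(\prod_{\alpha>0,\ w\alpha<0}c_{\alpha}(\chi)\Bigr)\,e_{w\chi},
\]
which I would derive by the $SL_{2}$ calculation on each simple-reflection factor followed by telescoping. This formula tells us that the spherical vector of $I_{\chi}$ is sent to a non-zero multiple of the spherical vector of $I_{w\chi}$ precisely when the corresponding product of $e_{\alpha}(\chi)$'s is non-zero (and the operator itself is defined, i.e.\ the product of $d_{\alpha}(\chi)$'s is non-zero, otherwise one renormalizes).

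For part (2), my plan is to compute the Jacquet module of the submodule $V\subset I_{\chi}$ generated by $e_{\chi}$. The full Jacquet module $r_{B}(I_{\chi})$ admits a composition series indexed by $W$, with characters $w\chi$ (suitably twisted) as successive quotients, grouped into $W_{\chi}$-orbits. The hypothesis $W_{\chi}=W(\chi)$ ensures that every element of the stabilizer of $\chi$ arises from a product of simple reflections $s_{\alpha}$ with $d_{\alpha}(\chi)=0$, i.e.\ from the genuine non-split rank-one extensions; this is what allows the spherical vector to reach every component of the Jacquet module corresponding to the $W_{\chi}$-orbit. The hypothesis $e(\chi)\neq 0$ then guarantees, via the formula above applied step by step along a reduced expression, that $e_{\chi}$ has non-zero image in every such component, forcing $V=I_{\chi}$. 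In the converse direction, if either hypothesis fails one exhibits a proper submodule or a proper quotient in which $e_{\chi}$ cannot generate.

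For part (1), I would exploit the duality $I_{\chi}^{\vee}\simeq I_{\chi^{-1}}$ together with the fact (due to Casselman) that every irreducible subquotient of an unramified principal series is unramified. Under this duality, irreducibility of $I_{\chi}$ is equivalent to cyclicity of the spherical vector in both $I_{\chi}$ and $I_{\chi^{-1}}$. Applying (2) to $\chi$ and to $\chi^{-1}$, and using the symmetries $W_{\chi^{-1}}=W_{\chi}$ and $W(\chi^{-1})=W(\chi)$, yields the stated conditions $e(\chi)e(\chi^{-1})\neq 0$ and $W_{\chi}=W(\chi)$. The hard part will be the bookkeeping inside part (2): showing rigorously that the submodule generated by $e_{\chi}$ exhausts exactly the subspace of the Jacquet module corresponding to the $W_{\chi}$-orbit of $\chi$. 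This requires a length-induction on $W/W_{\chi}$ together with a delicate rank-one analysis at each step, where one must distinguish carefully between the cases $d_{\alpha}(\chi)=0$ and $e_{\alpha}(\chi)=0$ and track how the spherical coefficient $c_{\alpha}(\chi)$ degenerates in each.
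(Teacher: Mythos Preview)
The paper does not prove this theorem; it is quoted verbatim as a result of Kato \cite{Kat} and then applied. There is therefore no ``paper's own proof'' to compare against. Your outline is a plausible sketch of how Kato's argument (and the related treatment by Casselman) actually proceeds: rank-one reduction of the intertwining operators, the Gindikin--Karpelevich formula on the spherical line, and Jacquet-module bookkeeping to control the submodule generated by the spherical vector.

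One genuine slip: the assertion that ``every irreducible subquotient of an unramified principal series is unramified'' is false (the Steinberg representation is already a counterexample). What you need, and what Casselman actually proves, is that $I_\chi$ has a \emph{unique} spherical irreducible subquotient. The passage from (2) to (1) then goes as follows: cyclicity of the spherical vector in $I_\chi$ says $I_\chi$ has a unique irreducible quotient (the spherical one); cyclicity in $I_{\chi^{-1}}\simeq I_\chi^\vee$ dualizes to say $I_\chi$ has a unique irreducible submodule, again spherical. Since the spherical subquotient is unique, the irreducible socle and the irreducible head coincide, and a finite-length module with equal irreducible socle and head is irreducible. Your identities $W_{\chi^{-1}}=W_\chi$ and $W(\chi^{-1})=W(\chi)$ are correct, so the reduction of (1) to two instances of (2) goes through once the Casselman input is stated correctly.
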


In our case, we can write  the character $\chi$ as $\chi= (\chi_1, \chi_2, \chi_3, \chi_4)$, $\chi_i(p^{-1})\in \C^\times$, so $\chi$ has the form 
\[
\chi= (p^{3/2}t, p^{3/2}t^{-1}, p^{1/2}t, p^{1/2}t^{-1})
\]
In this case the character is regular which means that $W_\chi=W(\chi)= 1$. So we need to check that $e(\chi)\neq 0$. This implies that for every $\alpha> 0$ we must have $\chi(a_\alpha)\neq p$. The positive roots in our case are $\{X_i-X_j,X_i+X_j\mid i< j\}$. If $\alpha= (X_i-X_j)$ we have $a_\alpha= (0,\dots,\underbrace{p}_{\text{$i$-th place}},\dots,
\underbrace{p^{-1}}_{\text{$j$-th place}},\dots, 0)$, and if $\alpha = (X_i+X_j)$ then  $a_\alpha= (0,\dots,\underbrace{p}_{\text{$i$-th place}},\dots,
\underbrace{p}_{\text{$j$-th place}},\dots, 0)$. Thus, the character $\chi$ can be written as 
\[
\chi(a_\alpha)= (|\;|^{3/2 + \sigma}, |\;|^{3/2 - \sigma}, |\;|^{1/2 + \sigma}, |\;|^{1/2 - \sigma})
\]
From this, it is immediate that $\chi(a_\alpha)\neq p$, and so, we can apply  Kato's theorem.

Thus, we have the main theorem of this section.
\begin{teo}\label{tchin2}
Let $\son$ be the split orthogonal group. Let $\pi$ be an automorphic cuspidal irreducible representation of $GL_2$. Given $\pi$, we build the representation $\pip$ of the Levi subgroup $M \simeq \displaystyle\prod^m GL_2 \subset SO_{2n}$ defined by $\pip:= \pi\otimes
 |\text{det}|^{\frac{n-1}{2}}\bigotimes \cdots \bigotimes \pi\otimes |\text{det}|^{\frac{1}{2}}$. Then,  the representation $Ind_{MN}^{SO_{2n}} (\pip\otimes 1)$ belongs to the residual spectrum and is given by an elliptic Arthur  parameter. Moreover,  any representation $\pi$ of $GL_2$ defines a unique irreducible submodule $E^*(Ind_P^G(\pip))$ in the space $\cala^2(SO_{2n})$, i.e., it appears with multiplicity one.
\end{teo}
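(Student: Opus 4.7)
The goal is to package the preceding three propositions — the holomorphy and non-vanishing of the modified Eisenstein series $E^*$ at $s_0 = ((n-1)/2,\dots,1/2)$, the local holomorphy statement Proposition~\ref{hubunavez}, and the unique quotient statement Proposition~\ref{uniqquotient} — into the existence of a single square-integrable automorphic form corresponding to an elliptic Arthur parameter, appearing with multiplicity one. The first step is to observe that $E^*(\phi_\pip, s_0)$ is a non-zero automorphic form on $\son(\A)$ whose only surviving constant term is along $w_0 = 1(-1,\dots,-1)$, namely the residue at $s_0$ of the original Eisenstein family; Proposition~\ref{hubunavez} guarantees that the pole detected by Corollary~\ref{ccm} at the unramified places is not cancelled by the local intertwiners at ramified or archimedean places.

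For square integrability, I would compute the cuspidal exponent of this residue. Only the $w_0$ constant term survives, so the exponent is the real part of $w_0 \cdot (\pip|_{s_0})$ viewed in $\mathfrak{a}_M^*$. Checking against the simple roots in $\Delta_M$, one verifies that this element lies strictly in the negative obtuse cone; Lemma~I.4.11 of \cite{M-W} then places the residue in $L^2_d(\son(\Q)\backslash\son(\A_\Q))$.

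To identify the Arthur parameter, I would compute the Satake parameters of $\pip$ twisted by $s_0$ at a good place $v$: they form the $2n$-tuple obtained by evaluating the homomorphism $\varphi_{\pi_v}\boxtimes \nu_n$ on the Frobenius, where $\nu_n$ is the irreducible $n$-dimensional representation of the Arthur $SL_2(\C)$. With $n$ even, $\nu_n$ is symplectic, and since $\pi$ is self-dual with trivial central character, $\varphi_\pi$ is symplectic as well; the tensor product is then orthogonal of dimension $2n$, lands in $SO_{2n}(\C) = {}^L\son$, and has finite centralizer, so $\psi = \varphi_\pi \boxtimes \nu_n$ is elliptic.

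Multiplicity one then follows formally: Proposition~\ref{uniqquotient} provides a unique irreducible quotient $\rho$ of $I := \mathrm{Ind}_P^{\son}(\pip)$, and the residue $\phi \mapsto E^*(\phi, s_0)$ is a non-zero $\son(\A)$-equivariant map from $I$ into $L^2_d$, so its image is an irreducible copy of $\rho$. Any further occurrence of $\rho$ in the discrete spectrum with the same cuspidal support would produce an independent residue of the Eisenstein family at $s_0$, contradicting the simplicity of the pole identified in Corollary~\ref{ccm}. The main obstacle is the explicit verification of Langlands' square integrability inequalities in the $\cald_n$-root system, which is where the hypothesis that $n$ is even enters decisively; once this is established, the identification of the Arthur parameter and the multiplicity statement follow by assembling the already-proved propositions.
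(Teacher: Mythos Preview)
Your proposal is correct and follows essentially the same route as the paper, which in fact states Theorem~\ref{tchin2} as a summary of the preceding propositions without a separate proof. You have correctly identified the three ingredients (non-vanishing of $E^*$ at $s_0$, the Langlands square-integrability criterion via \cite{M-W} I.4.11, and the unique-quotient statement) and how they assemble; your explicit description of the Arthur parameter as $\varphi_\pi \boxtimes \nu_n$ and the orthogonality/ellipticity check actually supplies more detail than the paper gives in Section~2. One minor remark: the hypothesis that $n$ is even is not specifically tied to the square-integrability inequalities but is already needed for the Levi $M \cong \prod^m GL_2$ to exist inside $\son$ at all, and your self-duality/trivial-central-character assumption on $\pi$ is implicit in the paper (it is what makes the last factor in Corollary~\ref{ccm} a zeta function) though not stated in the theorem.
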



\section{Singular discrete parameters}
The aim of this section is to show that Arthur's parameters having blocks of $sp_k$ for $k > n+1$, must be residual. The method was developed by Duke, Howe and Li (see \cite{DHL}). We shall study the restriction of the local induced representation
\[
Ind_{SO_{n+2}\times \G^{n-1}_m N}^{SO_{2n}}(\epsilon\otimes \bf{1}\otimes \chi)
\] 
to the subgroup $N_Q$, the unipotent radical  of the Siegel maximal parabolic subgroup $Q$ whose Levi component is isomorphic to $GL_n$. Here $\bf{1}$ means the trivial representation of $SO_{n+2}$. The representation $\epsilon \otimes \bf{1}$ is by hypothesis self-dual, so the character $\epsilon$ has order two and $\chi$ is a character of the torus $\G_m^{n-1}$. We suppose this character to be unitary for now. For the study of the restriction, we appeal to Mackey theory, namely, the following result.
\begin{teo}
Let $G_1$ and $G_2$ be two subgroups of a group $G$, regularly associated in $G$.  Let $\pi$ be a representation of $G_1$. For all $x\in G$ we set $G_x= G_2 \cap (x^{-1}G_1 x)$ and 
\[
V_x= Ind_{G_x}^{G_2}\left(\eta \to \pi (x\eta x^{-1})\right)
\]
So $V_x$ is determined modulo isomorphism by the double coset $\overline{x}$ of $x$. If $\mu$ is an admissible measure over the double quotient $G_1\backslash G / G_2$, then
\[
Ind_{G_1}^{G}(\pi)|_{G_2} \simeq \int^\oplus_{G_1\backslash G / G_2} V_{\overline {x}}d\mu(\overline{x})
\]
\end{teo}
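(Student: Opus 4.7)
The plan is to realize the induced representation $\text{Ind}_{G_1}^G(\pi)$ concretely as a space of functions on $G$ and then decompose this space according to the double cosets $G_1 \backslash G / G_2$; each double coset will contribute precisely one summand $V_{\bar x}$ to the restriction. First I would model $\text{Ind}_{G_1}^G(\pi)$ as functions $f: G \to V_\pi$ satisfying $f(g_1 g) = \pi(g_1) f(g)$ (with the appropriate integrability condition with respect to a quasi-invariant measure on $G_1\backslash G$), and observe that the restriction to $G_2$ acts by right translation.

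For a chosen representative $x$ of a double coset, I would define the map $f \mapsto \phi_x$ by $\phi_x(\eta) := f(x \eta)$ for $\eta \in G_2$, and check the transformation law: for $\eta_0 \in G_x = G_2 \cap x^{-1} G_1 x$, write $\eta_0 = x^{-1} g_1 x$ for some $g_1 \in G_1$; then
\[
\phi_x(\eta_0 \eta) = f(x \eta_0 \eta) = f(g_1 x \eta) = \pi(g_1)\, f(x\eta) = \pi(x \eta_0 x^{-1})\, \phi_x(\eta).
\]
Hence $\phi_x$ lies in the space of $\text{Ind}_{G_x}^{G_2}(\pi^x)$ where $\pi^x(\eta) := \pi(x\eta x^{-1})$, i.e. in $V_x$. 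A change of representative $x' = g_1 x g_2$ gives a canonical intertwiner $V_{x'} \simeq V_x$ (essentially via translation by $g_2$ and the action of $\pi(g_1)$), justifying the assertion that the isomorphism class of $V_x$ depends only on the double coset $\bar x$.

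To assemble these local maps into a global decomposition, I would invoke the hypothesis that $G_1$ and $G_2$ are regularly associated: this guarantees that $G_1\backslash G/G_2$ is a standard Borel space and admits a measurable section $s:G_1\backslash G/G_2 \to G$, so the admissible measure $\mu$ makes sense and the fibration $G_1\backslash G \to G_1\backslash G/G_2$ can be Fubini-ed. Writing the integration over $G_1\backslash G$ as an iterated integral, once over $G_x \backslash G_2$ along each fiber and once over $G_1\backslash G/G_2$ against $\mu$, the isometry
\[
f \;\longmapsto\; \bigl(\phi_{s(\bar x)}\bigr)_{\bar x}
\]
identifies $\text{Ind}_{G_1}^G(\pi)|_{G_2}$ with $\int^\oplus_{G_1\backslash G/G_2} V_{\bar x}\, d\mu(\bar x)$.

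The main obstacle is not the algebraic bookkeeping, which is straightforward, but the measure-theoretic setup: one must choose compatible quasi-invariant measures on $G_1\backslash G$, $G_x\backslash G_2$, and $G_1\backslash G/G_2$ and keep track of the modular functions that appear when these homogeneous spaces do not carry invariant measures, so that the map $f \mapsto (\phi_{\bar x})$ is genuinely unitary. The regularity hypothesis on the pair $(G_1, G_2)$ is precisely what is needed to ensure that such a disintegration is well-defined and that the direct integral is non-degenerate; without it the fibered measure on the double coset space can fail to exist.
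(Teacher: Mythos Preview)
Your sketch is the standard Mackey subgroup-theorem argument and is correct in outline. Note, however, that the paper does not give its own proof of this statement: it is quoted as a known result from Mackey theory (the paper writes ``we appeal to Mackey theory, namely, the following result'' and then states the theorem without proof, referring implicitly to \cite{Mac}). So there is nothing to compare against; your proposal simply supplies the classical argument that the paper omits.
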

We determine the double cosets $N_Q\backslash\son/ P$ having positive measure. We have a result of Berstein and Zelevinsky (see \cite{BZ2}), which shows that, we have only one open orbit and one closed orbit. This open orbit is the only one with positive measure. We shall show that we can take $Qw_0 P$ as a representative of this open orbit. Taking into account all the roots included in the Levi subgroups of $Q$ and $P$, we easily verify that the orbit $Qw_0 P$ and the group $SO_{2n}$ have the same dimension. Therefore, $Qw_0 P$ is, indeed, the open orbit. After writing $Q w_0 P$ as
\[
Q w_0 P=N_Q M_Q w_0 M_P N_P = N_Q  M_Q M_P
\overline{N}_P w_0
\]

The Mackey calculation leads us to the study of the restricted representation 
\[
x\to Ind_{N_Q\cap xPx^{-1}}^{N_Q}(^x\epsilon\otimes \chi)
\]
where $\epsilon\otimes\chi$ is a character of $P$. The exponential notation means conjugacy.

We recall that we shall consider only the open coset $Qw_0P$ for the calculation. If $x\in Qw_0P$ we can write $x=n_Q m_Q w_0 p$, with evident notations. Thus, we have
\[
x\equiv m_Q w_0 p\quad mod \,N_Q
\]
with some ambiguities, since this decomposition is not unique. We consider then the induced representation
\begin{equation}\label{eq4.0}
Ind_{N_Q\cap m_Qw_0P w_0 m_Q^{-1}}^{N_Q}(^y(\epsilon\otimes \chi)), \quad y= m_Q w_0
\end{equation}
We know that $m_Q$ normalizes $N_Q$ and this implies
\[
^{m_Q^{-1}}Ind\simeq Ind_{N_Q\cap w_0P
  w_0}^{N_Q}(^{w_0}(\epsilon\otimes \chi))
\]
The group $M_Q$ acts naturally on $N_Q = \Lambda^2(F^n)$ as the group $GL_n$ and $\widehat{N_Q}= \Lambda^2(F^{n*})$. Then a form $f \in \Lambda^2(F^{n*})$ is degenerate if and only if  $^mf$ is
degenerate. The same is true for the characters of groups. The problem is then reduced to the study of the  induced representation
\begin{equation}\label{emk4}
Ind_{N_Q\cap w_0 P w_0}^{N_Q}(^{w_0}(\epsilon\otimes\chi))
\end{equation}
Thus, we need to calculate 
\[
N_Q \cap w_0P w_0= N_Q \cap M_P \overline{N}_P= N_Q\cap M_P
\]
We characterize this intersection in terms of roots. It is easy to verify that the intersection of these   groups is generated by the roots in the set $B$, where $B$ is given by
\begin{equation}\label{pincheb}
B= \{ X_i+X_j \quad m\le i<j\le n \}
\end{equation}
We denote by  $N_B$ the unipotent group $\prod_{\alpha \in B}N_\alpha$. We know that $\epsilon$  is a character of $M_P= SO_{n+1}\times \G^{n-1}_m$, trivial on the derived subgroup $SO_{n+1}^+$ and then trivial on the unipotent group $N_Q$. Therefore the restriction of the induced representation becomes
\begin{equation}\label{eson15}
Ind_{N_B}^{N_Q}( \boldsymbol{1} )= m_1L^2(N_Q/N_B),
\end{equation}
where $m_1$ is the multiplicity of the trivial representation $\bf{1}$. Consequently, the continuous components of \eqref{eson15} are the characters of  $N_Q$ trivial on $N_B$, or equivalently, the linear forms of the Lie algebra $\mathfrak{n}_Q$ trivial on $\mathfrak{n}_B$.

\subsection{Singularity of forms on $\n_Q/\n_B$}
Howe defined a type of Fourier coefficients  (\cite{How}) for the symplectic groups using Siegel modular forms. J.S. Li in \cite{Li} defines, in general, Fourier coefficients for the classical groups. We follow next his definition. Let $V \simeq \Q^{2n}$ be a vector space and $X,Y \subset V$ maximal totally isotropic vector subspaces, being in duality for $\left<\,,\,\right>$, the bilinear form which defines $\son$. Since the quadratic form associated to $\son$ is $x_1x_{2n}+\cdots+
x_{n}x_{n+1}$, these totally isotropic subspaces are easy to describe: $X=(x_1, \dots x_n,
0,\dots,0)$ and $Y= (0,\dots,0,x_{n+1},\dots,x_{2n})$, with $x_i\in \Q$. Let $Q$ be the maximal parabolic subgroup of $\son$ which preserves $X$. The unipotent radical $N_Q$ is an abelian group, isomorphic to the vector space $B(Y)$, the space of the anti-symmetric bilinear forms on $Y$. The dual space of $B(Y)$ identifies naturally with $B(X)$. Suppose we have fixed an additive character $\psi$ of $\A_\Q/\Q$. Then, Pontriagin's dual of $N(\A)/N(\Q)$ is identified with $B(X)$.  Let  $\phi$ be a smooth function on $\call^2(\son(\Q)\backslash \son(\A))$. Let $\phi_T$ be the Fourier coefficient along  $N$, defined by 
\begin{equation}\label{coeffFou}
\phi_T(g)=\int_{N(\A)/N(\Q)}\phi(zg)\overline{\psi_T(z)}dz\qquad g\in \son(\A)
\end{equation}
We shall show that every (global) Fourier coefficient given by \eqref{coeffFou} is a degenerate element in $\widehat{(N(\Q)\backslash N(\A))}$.

According to previous remarks, we have
\[
Q=GL(Y)N_Q\simeq GL_n\, N_Q
\]
The subgroup $N_Q$ is unipotent and abelian, as we have already said. In fact, $N_Q$ consists of the matrices that have the form
\[
n(a) = \begin{pmatrix} 1_n & a\\ & 1_a \end{pmatrix},
\]
where $a$ is an alternating matrix of $n\times n$. Thus, when we pass to the Lie algebra, we have
\[
N_Q\simeq  \Lambda^2(V)\quad \text{and} \quad \widehat{N}_Q\simeq \Lambda^2(V^*)
\]

 Now, we can prove the local result analogous to the result of Duke, Howe and Li for $\son$.
\begin{prop}\label{sonmiodhl1}
Consider $\rho= Ind_{SO_{n+1}\times \G^{n-1}_m N}^{SO_{2n}}(\epsilon \otimes
{\bf 1}\otimes\chi)$. Let $\psi$ be a fixed non-trivial additive character of 
$\Q_p$. Let $\xi$ be a character of $N_Q(\Q_p)$ appearing in the spectral decomposition
of $\rho|_{N_Q}$. Write $\xi(x)= \psi((X,\lambda))$, where $x\in N_Q$, $x=exp(X)$, $X\in \mathfrak{n}_Q$
(the Lie algebra of $N_Q$) and $\lambda \in \mathfrak{n}_Q^*\simeq
\Lambda^2(V^*)$ (the space of alternating forms on $V$). Then
$\lambda$ is degenerate (we shall say in that case that $\xi$ is also degenerate).  
\end{prop}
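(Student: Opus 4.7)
The strategy is to feed the Mackey reduction carried out in the paragraphs preceding the statement into an elementary rank bound for alternating forms. That reduction has already produced the identification
\[
\rho|_{N_Q}\simeq m_1\,L^2(N_Q/N_B),\qquad N_B=\prod_{\alpha\in B}N_\alpha,
\]
with $B=\{X_i+X_j:\ m\le i<j\le n\}$. In particular, any character $\xi$ of $N_Q(\Q_p)$ appearing in $\rho|_{N_Q}$ must be trivial on $N_B(\Q_p)$. Writing $\xi(\exp X)=\psi((X,\lambda))$ with $\lambda\in\mathfrak{n}_Q^*$, this is precisely the annihilation condition $\lambda|_{\mathfrak{n}_B}=0$.

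I would then transcribe this in terms of the alternating form $\lambda\in\Lambda^2(V^*)$. Under the identification $\mathfrak{n}_Q\simeq\Lambda^2(V)$ sending the root vector attached to $X_i+X_j$ to $e_i\wedge e_j$, the annihilation condition reads
\[
\lambda_{ij}:=\lambda(e_i,e_j)=0\qquad\text{for all }m\le i<j\le n,
\]
i.e.\ the alternating matrix $(\lambda_{ij})_{1\le i,j\le n}$ has zero bottom-right $(m+1)\times(m+1)$ block. A column count then finishes the argument: for each $j\in\{m,\ldots,n\}$ the column $\lambda(e_j)=\sum_i\lambda_{ij}e_i$ is supported in rows $1,\ldots,m-1$, so the last $m+1$ columns lie in the $(m-1)$-dimensional subspace $\mathrm{span}(e_1,\ldots,e_{m-1})$, while the first $m-1$ columns contribute at most $m-1$ further dimensions. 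Hence
\[
\mathrm{rank}(\lambda)\le 2(m-1)=n-2<n,
\]
so $\lambda$ has non-trivial radical and is therefore degenerate as an alternating form on $V$.

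The argument is essentially conceptual rather than computational: the substantive work is the Mackey calculation identifying the open double coset, and the determination that $N_Q\cap w_0Pw_0^{-1}$ is generated by $B$ with $m\le i<j\le n$. Both points have been settled in the preceding paragraphs. The only subtlety worth rechecking is the pairing used to identify $\mathfrak{n}_Q^*$ with $\Lambda^2(V^*)$, so that the roots in $B$ really do pick out the bottom-right block of the alternating matrix; this is immediate from the way $GL_n$, the Levi of the Siegel parabolic, acts on the maximal isotropic subspace $V$.
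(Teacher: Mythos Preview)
Your proof is correct and follows essentially the same path as the paper: use the Mackey reduction to conclude $\lambda|_{\mathfrak{n}_B}=0$, translate this into the vanishing of the bottom-right $(m+1)\times(m+1)$ block of the alternating matrix, and deduce degeneracy. The only difference is cosmetic---the paper observes that the zero block forces the determinant to vanish, whereas you give a direct column-rank bound $\mathrm{rank}(\lambda)\le n-2$, which is arguably more transparent and yields a slightly sharper conclusion.
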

\begin{proof}
The  totally isotropic space $V$ is constructed from the classical canonical basis $\left<e_1,\dots, e_n \right>$. If $\xi$ occurs in the spectral decomposition,  $\lambda$ is an element of $\widehat{\mathfrak{n}}_Q$ trivial on
$\mathfrak{n}_B$. According to the characterization of the set  $B$ defined by 
\eqref{pincheb}, the subspace $\mathfrak{n}_B$ is generated by
\[
\mathfrak{n}_B =\left<e_m\wedge e_{m+1} \dots, e_m\wedge e_n,
e_{m+1}\wedge e_{m+2},\dots, e_{n-1}\wedge e_{n}  \right>
\]
thus $\lambda$ is trivial on $e_i\wedge e_j$, with $m\le i<j\le n$. This implies that the linear form $\lambda$ can be written in the dual basis of $\Lambda^2(V)$ as an alternating matrix having a subblock consisting of zeros of size $(m+1)\times (m+1)$. Therefore the determinant of this matrix is zero and $\lambda$ is degenerate.
\end{proof}
This result has this important global consequence. 
\begin{cor}\label{sonclmio8}
A global representation of $\son$ having at the place $p$ the local representation $Ind_{SO_{n+2}\times \G^{m-1}_m N}^{SO_{2n}}(\epsilon \otimes{\bf 1}\otimes\chi)$, with $\chi$ unitary, is residual.
\end{cor}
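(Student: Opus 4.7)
The plan is to deduce the global corollary from the local statement of Proposition \ref{sonmiodhl1} via a Fourier-theoretic argument along the Siegel unipotent $N_Q$, and then to apply the singularity criterion of Howe and Li to rule out cuspidality.

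Let $\Pi$ be an irreducible automorphic representation of $\son(\A)$ whose local component at $p$ is the induced representation of the proposition, and assume towards a contradiction that $\Pi$ is cuspidal. Fix a pure tensor $\phi=\otimes_v \phi_v$ in the space of $\Pi$ (using Flath's factorization $\Pi\simeq \otimes_v \Pi_v$), and for a rational $T\in \Lambda^2(V^*)(\Q)$ form the Fourier coefficient $\phi_T$ defined in \eqref{coeffFou}. Since $\psi_T=\prod_v \psi_{T,v}$, the $p$-th factor of $\phi_T$ is obtained by projecting $\phi_p$ onto the $\psi_{T,p}$-isotypic part of $\Pi_p$ regarded as an $N_Q(\Q_p)$-representation. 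Proposition \ref{sonmiodhl1} guarantees that this projection vanishes whenever the alternating form $T$ is non-degenerate locally at $p$; since the condition $\det T=0$ is rational, local non-degeneracy is the same as global non-degeneracy, and we conclude $\phi_T=0$ for every non-degenerate rational $T$.

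Thus every automorphic form in the space of $\Pi$ has a Fourier expansion along $N_Q$ supported only on degenerate characters, which is exactly the \emph{singularity} condition isolated by Howe \cite{How} for the symplectic case and extended by J.-S.\ Li \cite{Li} to the other classical groups, in particular to $\son$. Li's non-cuspidality theorem then asserts that a singular automorphic representation of a split orthogonal group cannot contribute to the cuspidal spectrum; its discrete realizations all lie in the residual part of $L^2(\son(\Q)\backslash \son(\A))$. This contradicts our cuspidality assumption, and therefore $\Pi$ must be residual.

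The main obstacle is the invocation of the Howe--Li criterion: one has to check that Li's notion of singularity (all Fourier coefficients along the Siegel unipotent $N_Q$ are supported on degenerate elements of $\Lambda^2(V^*)$) is precisely what Proposition \ref{sonmiodhl1} delivers, and that his non-cuspidality theorem applies in the generality of the split even orthogonal group with $n$ even. The local-to-global step is a routine consequence of the tensor product decomposition of $\Pi$ and the product structure of $\psi_T$, requiring no further computation beyond citing \cite{Li}.
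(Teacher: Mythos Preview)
Your overall strategy coincides with the paper's: combine the local Proposition~\ref{sonmiodhl1} with Li's theorem to rule out cuspidality. The one place where your argument is not quite right is the local-to-global passage. You assert that ``the $p$-th factor of $\phi_T$ is obtained by projecting $\phi_p$ onto the $\psi_{T,p}$-isotypic part of $\Pi_p$'', but the Fourier coefficient $\phi_T$ is an integral over $N_Q(\A)/N_Q(\Q)$ and does \emph{not} factor as a tensor product of local projections; there is no Whittaker-type uniqueness here that would force such a factorization. What is actually true is that a nonzero $\phi_T$ yields a nonzero $(N_Q(\A),\psi_T)$-equivariant functional on $\Pi$, and one must then argue that $\psi_{T,p}$ lies in the \emph{spectral support} of $\Pi_p|_{N_Q(\Q_p)}$, which is what Proposition~\ref{sonmiodhl1} controls. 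That implication --- from vanishing of local spectral support at a single place to vanishing of global Fourier coefficients --- is exactly Howe's local-global rank lemma, stated in the paper as Proposition~\ref{howelemma}; it is not a routine consequence of Flath's decomposition. The paper's proof invokes Proposition~\ref{howelemma} explicitly at this point, and you should do the same rather than calling the step routine. With that fix, your proof is the paper's proof.
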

For the proof of this corollary we need a result that Howe stated in the case of symplectic groups.
\begin{prop}\label{howelemma}
Let $\phi \in L^2(\son(\Q)\backslash \son(\A))$. The following statements are equivalent
\begin{enumerate}
\item The rank of $\phi$ is at most $k$.
\item In the representation by right translations of $\son(\A)$ on the subspace of $L^2(\son(\Q)\backslash \son(\A))$ generated by $\phi$, all  the local subgroups $\son(\Q_p)$ act by representations of rank $\le k$.
\item In the representations by right translations of $\son(\A)$ on the subspace of $L^2(\son(\Q)\backslash \son(\A))$ generated by $\phi$, there exists a prime number $p$ such that $\son(\Q_p)$ acts by representations of rank $\le k$.
\end{enumerate}
\end{prop}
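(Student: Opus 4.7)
The plan is to reduce all three statements to properties of the Fourier expansion of $\phi$ along the abelian unipotent radical $N_Q$. Using the fixed non-trivial additive character $\psi$ of $\A_\Q/\Q$, identify the Pontryagin dual of $N_Q(\Q)\backslash N_Q(\A)$ with the set of rational alternating forms $T\in B(X)(\Q)$. The Fourier coefficients $\phi_T$ from \eqref{coeffFou} give an expansion
\[
\phi(ng)=\sum_{T\in B(X)(\Q)}\phi_T(g)\,\psi_T(n),\qquad n\in N_Q(\A),\ g\in \son(\A).
\]
The rank of $\phi$ is, by definition, the supremum of $\mathrm{rank}(T)$ over those $T$ for which $\phi_T\not\equiv 0$. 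Since right translation by $\son(\A)$ moves the Fourier coefficients among themselves in an $M_Q(\Q)$-equivariant fashion, this condition propagates to the whole closed subrepresentation $V_\phi=\overline{\langle \son(\A)\phi\rangle}$ inside $L^2(\son(\Q)\backslash \son(\A))$.

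For the direction (1)$\Rightarrow$(2), assume $\phi_T\equiv 0$ for every rational $T$ with $\mathrm{rank}(T)>k$, and restrict $V_\phi$ to $\son(\Q_p)$. Each character $\xi_p$ of $N_Q(\Q_p)$ appearing in the $N_Q(\Q_p)$-spectral decomposition of $V_\phi$ is the localization at $p$ of a character $\psi_T$ attached to some rational $T$ contributing non-trivially to $\phi$; equivalently, $\xi_p$ is the character attached to $T\otimes_{\Q}\Q_p$. Since the rank of an alternating form is invariant under scalar extension, this form has rank $\le k$ as well, so $\son(\Q_p)$ acts by representations of rank $\le k$ in the sense of Howe--Li. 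The implication (2)$\Rightarrow$(3) is tautological.

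For (3)$\Rightarrow$(1), fix a prime $p$ at which the local action has rank $\le k$, and suppose that some $T\in B(X)(\Q)$ with $\mathrm{rank}(T)>k$ satisfies $\phi_T\not\equiv 0$. Then $\psi_T$ contributes to the Fourier expansion of $\phi$ along $N_Q(\A)$, and its localization at $p$, which is the $N_Q(\Q_p)$-character associated with $T\otimes_{\Q}\Q_p$, appears in the $N_Q(\Q_p)$-spectrum of $V_\phi$. By the rationality of $T$ we have $\mathrm{rank}(T\otimes_{\Q}\Q_p)=\mathrm{rank}(T)>k$, contradicting the hypothesis at $p$; hence $\phi$ has rank at most $k$.

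The main obstacle is to justify carefully the correspondence between the countable Fourier decomposition of $\phi$ over $N_Q(\Q)\backslash N_Q(\A)$ (indexed by rational forms) and the direct-integral decomposition of $V_\phi|_{N_Q(\Q_p)}$ (indexed by $p$-adic forms), namely the fact that the support of the latter is exactly the localization at $p$ of the rational characters contributing non-trivially to $\phi$. This is the automorphic content of the argument, and it is handled as in Howe's original work for $Sp_{2n}$ \cite{How} using the factorization of the automorphic spectrum at almost every place together with a density argument for $N_Q(\Q)$ inside $N_Q(\A)$ modulo a suitable compact subgroup. Once this translation is established, all three assertions reduce to the single statement that the non-vanishing rational Fourier coefficients of $\phi$ are supported on forms of rank at most $k$.
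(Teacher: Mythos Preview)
Your outline is correct and follows essentially the same approach as the paper, which simply cites Howe \cite{How} and says the proof carries over with slight modifications. You have in fact written out more of the argument than the paper does: the reduction to the Fourier expansion along $N_Q$, the tautological implication (2)$\Rightarrow$(3), and the contrapositive for (3)$\Rightarrow$(1) are exactly the skeleton of Howe's proof, and you correctly identify that the only substantive point is matching the rational Fourier support with the local $N_Q(\Q_p)$-spectrum, which is precisely what Howe's argument supplies.
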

\begin{proof}
The proof is essentially the same as given by Howe (see \cite{How}) with slight modifications.
\end{proof}

\begin{proof}[Proof of corollary \ref{sonclmio8}] 
If the representation $Ind_{SO_6\times \G_m N}^{SO_8}(\epsilon \otimes {\bf
1}\otimes \chi)$ was cuspidal, the main result in Li's paper (\cite{Li} main theorem
page $44$) assures the existence of at least a matrix $a_0$ of maximal rank $n$ 
whose corresponding Fourier coefficient  $\xi_{a_0}(g)$ is non zero. However, this contradicts proposition \ref{sonmiodhl1}, since all the forms appearing in the decomposition must be degenerate. Now proposition \ref{howelemma} affirms that if a representation is degenerate for one place, then it must be degenerate for all places. Consequently, any global discrete parameter whose component at the  place $p$ is the local representation $Ind_{SO_6\times \G_m N}^{SO_8}(\epsilon \otimes {\bf
1}\otimes \chi)$,  has to be residual.
\end{proof}

In the next subsection we will extend these results to the non-unitary case.

\subsection{Extension to the non-unitary case}
In order to formulate the extension to the non-unitary case, we quote a result from Dixmier:
\begin{teo}
Let $G$ be a group of type $I$ and $\rho$, a representation of $G$  into a Hilbert space $H$.
\begin{enumerate}
\item  There exists a Borel  function $\pi \to m(\pi)$ from $\widehat{G}$ to $\{1,2,\dots, \infty\}$ and a positive measure $\mu$ on $\widehat{G}$ such that
\begin{equation}\label{dixi1}
(\rho,H)= \int_{\widehat{G}}\widehat{\oplus}m(\pi)H_\pi d\mu(\pi)
\end{equation}
\item This expression is essentially unique. This means that if $\rho$ possesses two expressions \eqref{dixi1} for measures $\mu$ and $\mu'$ and functions $m$ and $m'$, then $\mu$ and $\mu'$ are equivalent and the functions $m$ and $m'$ are equal almost everywhere.
\end{enumerate}
\end{teo}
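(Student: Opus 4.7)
The plan is to build the decomposition from the von Neumann algebra generated by $\rho(G)$ and then invoke the Mackey Borel structure on $\widehat{G}$. First I would set $\calm := \rho(G)''$, the double commutant in $B(H)$, and let $\mathcal{Z} := \calm \cap \calm'$ be its centre. Since $H$ is separable, $\mathcal{Z}$ is an abelian von Neumann algebra on a separable Hilbert space, hence isometrically isomorphic to $L^\infty(X,\mu_0)$ for some standard Borel probability space $(X,\mu_0)$ by the spectral theorem for abelian von Neumann algebras.

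Next I would apply the standard disintegration theorem for Hilbert spaces to obtain a measurable field $z \mapsto H_z$ with a unitary isomorphism $H \simeq \int_X^{\oplus} H_z\, d\mu_0(z)$ diagonalising $\mathcal{Z}$, together with a corresponding disintegration $\rho = \int_X^{\oplus} \rho_z\, d\mu_0(z)$. By construction $\rho_z(G)''$ is a factor for $\mu_0$-almost every $z$. The hypothesis that $G$ is of type~I means that every factor representation of $G$ is quasi-equivalent to an irreducible one, so for almost every $z$ there exist $\pi_z \in \widehat{G}$ and a multiplicity $m(z) \in \{1,2,\dots,\infty\}$ with $\rho_z \simeq m(z)\,\pi_z$.

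To pass from the integral over $X$ to one over $\widehat{G}$, I would invoke Mackey's theorem that $\widehat{G}$ carries a standard Borel structure precisely when $G$ is type~I. A measurable-selection argument (inside the Borel field of irreducible components) shows that the map $z \mapsto [\pi_z]$ is Borel, so pushing $\mu_0$ forward yields the desired measure $\mu$ on $\widehat{G}$, while the multiplicity function $m$ descends consistently since equivalent irreducibles give isomorphic isotypic components. Reassembling the pieces produces the direct integral \eqref{dixi1}.

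For the uniqueness assertion, I would argue that any decomposition of the required form recovers the central decomposition of $\calm$: the measure class of $\mu$ on $\widehat{G}$ coincides with the spectral measure class of the centre $\mathcal{Z}$, and the multiplicity function coincides almost everywhere with the local coupling constant of $\calm$ over $\mathcal{Z}$. Both invariants are intrinsic to $\calm$, so two such decompositions must agree up to equivalence of measures and almost-everywhere equality of multiplicities. The main obstacle throughout is the descriptive-set-theoretic infrastructure, namely the construction of a genuinely Borel field of irreducibles with the correct multiplicities and the standardness of $\widehat{G}$; these are exactly the points where the type~I hypothesis is unavoidable, and I would cite the full treatment in Dixmier's \emph{Les $C^*$-alg\`ebres et leurs repr\'esentations} rather than re-derive it.
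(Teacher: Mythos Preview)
Your sketch is correct and follows the standard route (central decomposition of the von Neumann algebra generated by $\rho(G)$, followed by Mackey's theorem that $\widehat{G}$ is standard Borel for type~I groups). However, there is nothing to compare with: the paper does not prove this statement at all. It is introduced with the words ``we quote a result from Dixmier'' and is simply cited from \cite{Dix} as background for the subsequent discussion of supports and weak containment. Your own closing remark---that you would ultimately cite Dixmier's \emph{Les $C^*$-alg\`ebres et leurs repr\'esentations} for the full treatment---is therefore exactly what the paper does, and is the appropriate course of action here.
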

The utility of \eqref{dixi1} is clear, the Borel function $m$ indicates the multiplicity of the representation $\pi$ in the integral decomposition. Dixmier's formulation says
\begin{equation}\label{dixi2}
(\rho,H)=\bigoplus_{n=1}^\infty  \int_{\widehat{G}}\widehat{\oplus}H_\pi d\mu_n(\pi)
\end{equation}
where the measures $\mu_n$ are mutually disjoint.

\begin{defn}
Let $\rho$ be a representation of a group $G$ of type $I$, having a decomposition of type \eqref{dixi1} or \eqref{dixi2}. The support of the measure $\mu$, that is a closed subset of $\dualg$, is called the \emph{support} of $\rho$ and is denoted by $Supp(\rho)$. If $\pi \in \dualg$ we say that $\pi$ is \emph{weakly contained} in $\rho$ if $\pi \in Supp(\rho)$.
\end{defn} 

Let $k$ be a $p$-adic field. We shall consider the group $G$ defined over the $p$-adic field $k$. This property can then be characterized in terms of the matrix coefficients.

\begin{prop}\label{equisup}
Let $G$ be a group of type $I$, $\rho$ an unitary representation  of $G$ and $\rho_1 \in \dualg$. The following statements are equivalent:
\begin{enumerate}
\item The representation $\rho_1$ is weakly contained in $\rho$.
\item Every matrix coefficient $c_v$ of $\rho_1$ is a limit, uniformly on every compact subset of $G$, of a sequence of positive linear combinations of coefficients of $\rho$.
\end{enumerate}
\end{prop}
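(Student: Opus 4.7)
The plan is to deduce the proposition from the Fell topology characterization of $\dualg$. Recall that a fundamental system of neighborhoods of $\rho_1\in\dualg$ consists of sets of the form
\[
U(v,K,\varepsilon)=\{\pi\in\dualg\colon \exists\,\xi\in H_\pi,\ |\langle\pi(g)\xi,\xi\rangle-\langle\rho_1(g)v,v\rangle|<\varepsilon\ \forall g\in K\},
\]
for unit vectors $v\in H_{\rho_1}$, compact $K\subset G$, and $\varepsilon>0$. Under this description the condition $\rho_1\in\mathrm{Supp}(\mu)$ is precisely a statement about uniform-on-compacta approximation of the positive-definite function $c_v(g)=\langle\rho_1(g)v,v\rangle$, which is the content of (2). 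Both implications then rest on manipulations of the Dixmier decomposition \eqref{dixi1}.

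For (1)$\Rightarrow$(2): Fix a diagonal coefficient $c_v$, a compact $K$ and $\varepsilon>0$. Since $\rho_1\in\mathrm{Supp}(\mu)$ the open set $U=U(v,K,\varepsilon)$ has $\mu(U)>0$. Select measurably, for each $\pi\in U$, a unit vector $\xi_\pi\in H_\pi$ with $\langle\pi(g)\xi_\pi,\xi_\pi\rangle$ within $\varepsilon$ of $c_v(g)$ uniformly on $K$. Integrating against $\mu|_U$ produces a diagonal coefficient of the subrepresentation $\int^\oplus_U\pi\,d\mu$ of $\rho$ which stays within $\varepsilon$ of $\mu(U)\cdot c_v$ on $K$. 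Partitioning $U$ into measurable pieces and replacing the integral by finite Riemann sums converts this into a finite positive linear combination $\sum_j c_j\langle\rho(g)w_j,w_j\rangle$ approximating $c_v$, yielding (2).

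For (2)$\Rightarrow$(1): Suppose $c_v$ is a uniform-on-compacta limit of $f_n(g)=\sum_{i}c_i^{(n)}\langle\rho(g)w_i^{(n)},w_i^{(n)}\rangle$. Setting $W_n=\bigl(\sqrt{c_i^{(n)}}\,w_i^{(n)}\bigr)_i$, we have $f_n(g)=\langle\rho^{\oplus d_n}(g)W_n,W_n\rangle$, exhibiting $f_n$ as a diagonal coefficient of $\rho^{\oplus d_n}$; in particular the support of the cyclic subrepresentation generated by $W_n$ is contained in $\mathrm{Supp}(\rho^{\oplus d_n})=\mathrm{Supp}(\rho)$. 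Uniform convergence $f_n\to c_v$ on each compact $K$ now forces, for every $\varepsilon>0$, the representation $\rho^{\oplus d_n}$ eventually to lie in the neighborhood $U(v,K,\varepsilon)$ of $\rho_1$, so that $U(v,K,\varepsilon)\cap\mathrm{Supp}(\rho)\neq\emptyset$. Since this holds for every basic neighborhood and $\mathrm{Supp}(\rho)$ is closed, $\rho_1\in\mathrm{Supp}(\rho)$.

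The main technical obstacle is the honest verification that the sets $U(v,K,\varepsilon)$ furnish a neighborhood base of the Fell topology on $\dualg$, and that uniform-on-compacta closeness of positive-definite functions corresponds faithfully to closeness of the associated points of $\dualg$. This is the classical theorem of Fell identifying $\dualg$ with a quotient of the cone of positive-definite functions equipped with the uniform-on-compacta topology; once it is invoked both directions reduce to essentially tautological reformulations of (1) in the language of positive-definite functions, so in practice one may simply cite this as a standard result in the theory of $C^*$-algebra duals (Dixmier, Fell) to shorten the argument considerably.
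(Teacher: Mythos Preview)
The paper does not actually prove this proposition: its entire ``proof'' is the sentence ``For the proof see \cite{Clo} or \cite{Dix}.'' Your proposal is therefore already more detailed than what the paper provides, and your concluding remark---that one may simply cite the standard Fell--Dixmier theory of the $C^*$-algebra dual---is exactly the route the paper takes.
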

For the proof see \cite{Clo} or \cite{Dix}.

We state the main result of this subsection.
\begin{teo}\label{unitext}
Let $N$ be a unipotent abelian group and suppose we have an  increasing exhaustive sequence $\{N_l\}$ of compact subgroups of $N$. If $\chi$ is a character of $N$ weakly contained in $\rho$ for every $l$, then the restriction $\chi |_{N_l}$ is weakly contained in $\rho$. Conversely, if for every $l$ the restriction $\chi |_{N_l}$ is weakly contained in $\rho$, then $\chi$ is weakly contained in $\rho$.
\end{teo}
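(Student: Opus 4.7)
The plan is to reduce everything to the matrix coefficient characterization of weak containment provided by Proposition~\ref{equisup}. The crucial preliminary observation is that since each $N_l$ is compact, the image $\chi(N_l)$ is a compact subgroup of $\C^\times$ and therefore lies in the unit circle; consequently $\chi|_{N_l}$ is automatically a genuine unitary character of $N_l$, even if $\chi$ itself fails to be unitary on $N$. This places the restrictions in the setting where Proposition~\ref{equisup} applies without any modification, and it is what allows the non-unitary situation to be handled by an approximation argument.

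For the forward direction, assume $\chi$ is weakly contained in $\rho|_N$. Proposition~\ref{equisup} provides a sequence $c_j$ of positive linear combinations of matrix coefficients of $\rho|_N$ such that $c_j \to \chi$ uniformly on every compact subset of $N$. Restricting each $c_j$ to $N_l$ produces a positive linear combination of matrix coefficients of $\rho|_{N_l}$ converging to $\chi|_{N_l}$ uniformly on the compact group $N_l$, and a second application of Proposition~\ref{equisup} yields that $\chi|_{N_l}$ is weakly contained in $\rho|_{N_l}$.

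For the converse, I would use a diagonal argument against the exhaustion $N=\bigcup_l N_l$. For each $l$, the hypothesis together with Proposition~\ref{equisup} yields a positive linear combination $c_l$ of matrix coefficients of $\rho|_{N_l}$ satisfying $|c_l - \chi| < 1/l$ uniformly on $N_l$. Now each matrix coefficient of $\rho|_{N_l}$ is the restriction of the corresponding matrix coefficient of $\rho|_N$, so $c_l$ lifts canonically to a positive linear combination $\widetilde{c}_l$ of matrix coefficients of $\rho|_N$. For an arbitrary compact $K\subset N$, exhaustiveness produces $l_0$ with $K\subset N_l$ whenever $l\geq l_0$, so $|\widetilde{c}_l - \chi| < 1/l$ on $K$ for all large $l$; hence $\widetilde{c}_l \to \chi$ uniformly on compact subsets of $N$, and Proposition~\ref{equisup} gives the weak containment of $\chi$ in $\rho|_N$.

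The main obstacle I anticipate is conceptual rather than technical: namely, a priori "$\chi$ weakly contained in $\rho$" is not defined by the classical Dixmier framework when $\chi$ is non-unitary, because $\chi$ then does not belong to $\widehat{N}$. The role of this theorem is essentially to \emph{define} weak containment in the non-unitary case via the uniform approximation property on compact subgroups, and one must check that this extension is coherent with the unitary notion and well-posed. The argument above, phrased entirely in terms of uniform convergence of positive linear combinations of matrix coefficients, makes this coherence transparent, since that property makes sense regardless of unitarity while reducing, on each compact $N_l$, to the classical weak containment of unitary characters where Proposition~\ref{equisup} is unambiguously applicable.
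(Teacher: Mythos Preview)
Your forward direction is identical to the paper's. For the converse you take a genuinely different route: you lift matrix coefficients from $N_l$ back to $N$ and run a diagonal argument against the exhaustion, invoking Proposition~\ref{equisup} on both ends. The paper instead works directly in $\widehat{N}$: it identifies $N_l=\exp(\fp^{-l}\fnn(\calo))$, observes that $\chi|_{N_l}\in Supp(\rho|_{N_l})$ forces $\chi\in Supp(\rho)+\fp^{l}\fnn(\calo)^*$, and concludes because the $\fp^{l}\fnn(\calo)^*$ form a neighborhood basis of $0$ and $Supp(\rho)$ is closed. Your argument is more portable (it does not use the $p$-adic lattice structure), while the paper's argument makes the topology of the support completely explicit. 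One small point: your step ``exhaustiveness produces $l_0$ with $K\subset N_l$'' tacitly uses that the $N_l$ are open, which follows from Baire category since $N=\bigcup N_l$; it would be worth saying so.

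Your final paragraph misreads the role of the theorem. In the paper $\chi$ is a unitary character of $N$ (an element of $\widehat{N}$), so weak containment is already defined and your observation that $\chi(N_l)\subset S^1$ is automatic. The passage to non-unitary data is not carried out by this theorem but by the subsequent Corollary~\ref{cdxrunt}, whose mechanism is different: one realizes the induced representation on a compact model so that its restriction to each $N_l$ is literally independent of the non-unitary parameter $s$, and then applies the present theorem at $s=0$. This does not affect the validity of your proof, but you should drop the claim that the theorem itself is what extends weak containment to non-unitary characters.
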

\begin{proof}
We can suppose that $\{N_l\}$ is the sequence $\exp(\fp^{-l}\fnn(\calo))$, with $\calo$ the ring of integers of $k$, $N \simeq (\G_a)^r$ defined over $\Z$, $\fnn \simeq k^r$ is the Lie algebra of $N$ and we set $\fnn(\calo)= \calo^r$. We know that $\rho$ admits a decomposition of type \eqref{dixi2}, so 
\[
H= \bigoplus_1^\infty L^2(\dualn, d\mu_n)
\]
If $\chi \in Supp(\rho)$, this part is an immediate consequence of proposition \ref{equisup} since if every coefficient $c_v$ of $\rho$ is a uniform limit on every compact subset of $G$, in particular, it is, over each $N_l$.
We know that $N$ can be canonically identified with its Lie algebra, i.e. $N \simeq \fnn$. In particular, if we chose a basis of $\fnn$, such that $\fnn(\calo)$ is well defined, we have $\fnn(\calo) \simeq N(\calo)$. Thus, their dual groups are also isomorphic $\dualn \simeq \fnn^*$. This remark is meaningful, since we can consider $\chi$ as an element of $\fnn^*$. The compact subgroups form an increasing sequence and hence for $i>j$ we have $N_i \supset N_j$.  These compact subgroups can be written as
\[
N_l = \exp(\fp^{-l}\fnn(\calo)) \subset N
\]
So if $X\in \fnn$, $\exp(X)=x\in N$. Suppose we have fixed a non-trivial additive character $\theta$ of $k$,  whose conductor is $\calo$. Thus, the character $\chi(x)= \theta(\lambda(X))$, for some form $\lambda$.
We have seen that $H$ is decomposed as
\[
H= \int_{\widehat{N}}\widehat{\oplus}m(\chi)H_\pi d\mu(\chi)
\]
where the spaces $H(\chi)$ are invariant, irreducible and isomorphic to $\C$ since $N$ is abelian. If we consider $\chi|_{N_l}$ we have
\[
H= \int_{\widehat{N}_l}\widehat{\oplus}m(\chi_{N_l})H_\pi d\mu(\chi_{N_l})
\]
If $\chi|_{N_l} \in Supp(\mu)$ there exists $\phi \in Supp(\mu)$ such that $\phi = \chi + \lambda$ and $\lambda |_{\fp^{-l}\fnn(\calo)} \subset \calo$. Thus $\lambda |_{\fnn(\calo)} \subset \fp^l$. Hence $\lambda \in \fp^l \fnn(\calo)^*$, where 
\[
\fnn(\calo)^* = \{\lambda \in \fnn^* \mid \left<\lambda, \fnn(\calo)\right> \subset \calo\}
\] 
Thus if$\chi|_{N_l} \in Supp(\rho)$ for all $l$, then $\chi \in Supp(\rho)+\fp^l \fnn(\calo)^*$.   Since $\fp^l \fnn(\calo)^*$ is a base of neighbourhoods of zero and $Supp(\rho)$ is closed, then $\chi \in Supp(\rho)$
\end{proof}

\begin{cor}\label{cdxrunt}
The local induced representation $Ind_{SO_{n+2}\times \G^{n-1}_m N}^{SO_{2n}}(\epsilon\otimes \bf{1}\otimes \chi)$ considered in this section, when $\chi$ is no longer unitary, is singular.
\end{cor}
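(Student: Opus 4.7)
The plan is to reduce the non-unitary case to the unitary case already treated in Proposition \ref{sonmiodhl1}, using the weak containment formalism established in Theorem \ref{unitext}. Fix an exhaustive increasing sequence $\{N_l\}$ of compact open subgroups of $N_Q$. The goal is to show that every unitary character $\xi$ of $N_Q$ weakly contained in $\rho|_{N_Q}$, where $\rho = Ind_{SO_{n+2}\times \G^{n-1}_m N}^{\son}(\epsilon \otimes \mathbf{1}\otimes \chi)$, arises from a degenerate alternating form on $V$. Once this is shown, the Howe--Li notion of singularity follows from the characterization of degenerate forms given in Proposition \ref{sonmiodhl1}.

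First I would observe that the geometric Mackey analysis preceding Proposition \ref{sonmiodhl1} --- the identification of the open double coset $Qw_0P$, the computation $N_Q\cap w_0 P w_0 = N_B$, and the vanishing of $\epsilon \otimes \chi$ on $N_B$ because $N_B$ lies in the derived subgroup $SO_{n+1}^+$ while $\chi$ is a character of the toric factor --- depends only on the orbit structure and not on the unitarity of $\chi$. What fails in the non-unitary case is the disintegration of $\rho|_{N_Q}$ as a unitary representation; this is precisely what forces the passage to weak containment rather than working with constituents of a direct integral.

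Next, I would invoke Theorem \ref{unitext}: a character $\xi$ of $N_Q$ is weakly contained in $\rho|_{N_Q}$ if and only if, for every $l$, the restriction $\xi|_{N_l}$ is weakly contained in $\rho|_{N_l}$. On each compact $N_l$, weak containment is detected by uniform limits of matrix coefficients (Proposition \ref{equisup}). The matrix coefficients of $\rho|_{N_l}$ factor, via the Mackey decomposition applied to the open orbit, through characters of $N_Q$ trivial on $N_B$ restricted to $N_l$; the non-unitary twist by $\chi$ enters only through the $M_P$-factor and does not perturb the trivial behaviour on $N_B\cap N_l$. Consequently every $\xi|_{N_l}$ weakly contained in $\rho|_{N_l}$ is trivial on $N_B\cap N_l$. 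Since $N_B = \bigcup_l (N_B\cap N_l)$, the character $\xi$ itself is trivial on $N_B$, and by the reasoning of Proposition \ref{sonmiodhl1} the associated linear form $\lambda \in \Lambda^2(V^*)$ has a zero block of size $(m+1)\times(m+1)$, hence is degenerate.

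The main obstacle I expect is the careful justification of the Mackey-type reduction on each compact $N_l$ in the absence of global unitarity. The cleanest remedy is to exploit that the locus of degenerate forms in $\Lambda^2(V^*)$ is a Zariski-closed (hence topologically closed) subvariety, so that the weak-containment support of $\rho_\chi|_{N_Q}$, which lies in this closed locus for $\chi$ on the unitary axis, continues to lie there under the analytic continuation $\chi = \chi_0 |\cdot|^s$ to arbitrary $s$. Combined with Theorem \ref{unitext}, this closedness argument delivers the singularity of $\rho$ for all non-unitary $\chi$.
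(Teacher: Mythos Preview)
Your proposal is on the right track in that it invokes Theorem \ref{unitext} to reduce to the compact subgroups $N_l$, but it misses the key simplifying observation that the paper's proof uses, and the workarounds you offer are not fully justified.

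The paper's argument is this: realize $\rho_s = Ind_P^{\son}(\epsilon \otimes |\cdot|^s)$ in the \emph{compact picture}, i.e.\ as $Ind_{K\cap P}^K(\epsilon \otimes 1|_{K\cap P})$ via the Iwasawa decomposition $\son = PK$. On this model the underlying $K$-module, and hence the action of any compact subgroup of $K$, is literally independent of $s$. Since each $N_l$ is compact it embeds (up to conjugacy) into $K$, so $\rho_s|_{N_l} \simeq \rho_0|_{N_l}$ as unitary representations of $N_l$ for every real $s$. The unitary case $s=0$ is exactly Proposition \ref{sonmiodhl1}, and Theorem \ref{unitext} then gives the conclusion. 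This completely sidesteps the difficulty you flag (that $\rho_s$ is not globally unitary, so Mackey disintegration and weak containment for $\rho_s|_{N_Q}$ are not directly available).

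Your proposed route --- redoing Mackey on the open orbit for non-unitary $\chi$, then appealing to Zariski-closedness of the degenerate locus under analytic continuation in $s$ --- has a genuine gap. You never explain in what sense ``the weak-containment support of $\rho_\chi|_{N_Q}$'' is defined when $\rho_\chi$ is not unitary, nor why this support should depend semi-continuously on $s$. The sentence ``the non-unitary twist by $\chi$ enters only through the $M_P$-factor and does not perturb the trivial behaviour on $N_B\cap N_l$'' is pointing at the right phenomenon, but it is not a proof; the compact-picture realization is precisely the device that makes this statement rigorous and immediate.
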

\begin{proof}
We consider only the non-tempered case.  We consider the induced representation 
$Ind_P^{\son}(\epsilon\otimes |\,|^s)$ where $s \in(-\frac{1}{2},\frac{1}{2})$, $s$ real number. The representation
$Ind_P^{\son}(\epsilon\otimes |\,|^s)$ is realized as a compact induced representation
 $Ind_{K\cap P}^K(\epsilon\otimes 1|_{K\cap P})$, where $K$ is a maximal compact subgroup. Since the subgroups $N_l$ are compact (and absorbed modulo conjugacy for every neighborhood of the identity), they embed, up to conjugacy, into $K$. The restriction of $Ind_P^G(\epsilon\otimes |\,|^s)$ to $N_l$ does not depend on $s$. The corollary follows from \ref{sonmiodhl1}.
\end{proof}

\subsection{More eccentric parameters}
We consider first the parameter given by $\psi= sp_{n+1}+\sum_{i=1}^{m-1}\chi_i$, where $\chi_i$ is a character of  $\G_m= \Q^\times$, for all $i$. The conditions $det=1$ and the self-duality imply that one of these characters must be 
trivial. In this parameter $\psi$,  $sp_{n+1}+1$ occurs as the trivial representation of  $SO_{n+2}$ and thus we are in the case of
\[
Ind_{SO_{n+2}\G_m^{m-1} N}^{\son} (\epsilon\otimes {\bf 1})\otimes  \chi
\]
which is singular.

Consider now parameters $\psi$ having  eigenvalues (in their Hecke matrix) bigger than those of $sp_n$. These parameters are of type
\begin{equation}\label{parmtexcnt}
\psi= \epsilon\otimes sp_k \oplus \sum r_i \otimes sp_i,
\end{equation}
having blocks $sp_k$ with $k> n+1$. We consider first, the case $k=n+3$, corresponding to the trivial representation of $SO_{n+4}\subset \son$. The idea now is to realize $sp_{n+3}$ as a singular subrepresentation, where $sp_{n+1}$ occurs. More precisely, we have
\[
\xymatrix{
sp_{n+3}  \ar@{^{(}->}[r] & Ind_{\G_m SO_{n+2} N}^{\son}(\chi,sp_{n+1})
} 
\]
Therefore, when we consider the whole parameter denoted by $\psi'$ we have
\[
\xymatrix{
\psi'= sp_{n+3} \oplus \sum sp_{i}\otimes r_i  \ar@{^{(}->}[r] & Ind_{GL_r \G_m
  SO_{n+2}N}^{\son}(\chi,sp_{n+1})\otimes\xi
}. 
\]
Here $\xi$ is a character of the torus of $GL_r$ and $\chi$ a character of $\G_m$. Since we have the initial  hypothesis that $sp_{n+1}$ is singular, the parameter  $\psi'= sp_{n+3} \oplus \sum
sp_{i}\otimes r_i $ is also singular. We generalize this argument for bigger blocks with an easy induction. This lead us to the following result. 
\begin{teo}\label{teosingexct}
Let $\psi$ be a discrete parameter of the split group $\son$ having the form
\[
\psi= sp_{k} \oplus \sum r_i\otimes sp_{i}
\]
with $k$ odd and $k>n+1$. Then $\psi$ is globally singular in Howe's sense. More precisely, the representations of  $SO_{2n}(\Q_v)$ associated  to the parameter $\psi$, are singular and cannot appear in the space of cuspidal forms.
\end{teo}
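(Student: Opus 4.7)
The plan is to proceed by induction on the odd integer $k > n-1$, taking $k = n+1$ as the base case. For $k = n+1$ the block $sp_{n+1}$ corresponds to the trivial representation of $SO_{n+2}$ sitting inside $\son$, so any representation associated to $\psi$ is a local constituent of the induced representation $Ind_{SO_{n+2}\times\G_m^{m-1}N}^{\son}(\epsilon\otimes\mathbf{1}\otimes\chi)$, and the claim is exactly Corollary \ref{sonclmio8} combined with Corollary \ref{cdxrunt} (the latter removing the unitarity assumption on $\chi$).

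For the inductive step, the key embedding of Speh modules
\[
sp_{k+2}\hookrightarrow Ind_{\G_m\times SO_{k+1}\times N}^{SO_{k+3}}(\chi\otimes sp_k)
\]
(generalizing the $k=n+1$ case sketched just before the theorem) realizes the larger block $sp_{k+2}$ as a subrepresentation of an induction in which $sp_k$ appears on an inner $SO_{k+1}$-factor. This embedding is verified by Frobenius reciprocity from the description of the Jacquet module of $sp_{k+2}$ along the maximal parabolic of $SO_{k+3}$ with Levi $\G_m\times SO_{k+1}$, the relevant constituent being the Langlands quotient of $\chi\otimes sp_k$. Inducing further along the remaining blocks and using transitivity of parabolic induction, the full parameter
\[
\psi' = sp_{k+2}\oplus\sum r_i\otimes sp_i
\]
embeds into $Ind^{\son}_{GL_r\times\G_m\times SO_{k+1}\times N}(\xi\otimes\chi\otimes sp_k)$, where $\xi$ is an appropriate character of the torus of $GL_r$.

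I would then conclude by observing that singularity of the inner block $sp_k$ (the induction hypothesis) propagates through the final parabolic induction. Indeed, the Mackey-theoretic calculation of Section $3$ applies verbatim: restricting to the Siegel unipotent radical $N_Q\subset\son$ and keeping only the open double coset, each character of $N_Q$ in the spectral decomposition is trivial on a root subgroup $N_B$ where $B$ contains the set \eqref{pincheb}; hence the dual form in $\Lambda^2(V^*)$ has a vanishing block of size at least $(m+1)\times(m+1)$ and is degenerate. Theorem \ref{unitext} removes the unitarity restriction on $\chi$ and $\xi$, and Proposition \ref{howelemma} then transfers the local singularity at a single place $v$ into the global non-cuspidality conclusion.

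The principal obstacle will be the uniform verification of the embedding $sp_{k+2}\hookrightarrow Ind(\chi\otimes sp_k)$ across all local places, particularly the bookkeeping of Langlands--Vogan data at the archimedean places where Speh modules are described via cohomological induction. A secondary obstacle is checking that, when additional Levi factors are introduced through the blocks $r_i\otimes sp_i$, the root set $B$ appearing in the proof of Proposition \ref{sonmiodhl1} is only enlarged, so that degeneracy of the associated Fourier coefficient in $\Lambda^2(V^*)$ is preserved independently of the size of $r$ or of the characters $r_i$.
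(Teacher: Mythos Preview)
Your proposal is correct and follows essentially the same inductive scheme as the paper: reduce the block $sp_{k+2}$ to an induction containing $sp_k$ on a smaller orthogonal factor, then invoke the Mackey computation of Section~3 to see that the restriction to $N_Q$ is supported on degenerate characters. The paper's own argument (the paragraph immediately preceding the theorem) is terser but identical in substance --- it writes down the embedding for $k=n+3$ and then says ``we generalize this argument for bigger blocks with an easy induction.''

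Two small remarks on your write-up. First, your phrasing mixes two logically distinct justifications: you invoke ``the induction hypothesis'' and then say ``indeed, the Mackey-theoretic calculation of Section~3 applies verbatim.'' Either suffices on its own. If you re-run the Mackey argument for the Levi $GL_r\times\G_m\times SO_{k+1}$ directly (as you in fact do), there is no need for an inductive hypothesis at all: since $k\ge n+1$ the orthogonal factor $SO_{k+1}$ has semisimple rank at least $m+1$, so $N_Q\cap M_P$ already contains $N_B$ for the set $B$ of \eqref{pincheb}, and degeneracy follows. The paper's induction is really a device to reduce every case to the fixed Levi containing $SO_{n+2}$, for which Proposition~\ref{sonmiodhl1} was stated; your direct observation that $B$ only grows is an equally valid (and slightly cleaner) route.

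Second, the ``principal obstacle'' you flag at the archimedean places is less serious than you suggest. The embedding $sp_{k+2}\hookrightarrow Ind_{\G_m\times SO_{k+1}}^{SO_{k+3}}(\chi\otimes sp_k)$ is just the assertion that the trivial representation of $SO_{k+3}$ occurs in the degenerate principal series induced from the trivial representation of $SO_{k+1}$ twisted by $|\,\cdot\,|^{(k+1)/2}$; no cohomological induction or Langlands--Vogan bookkeeping is needed, and the paper accordingly takes this for granted at all places.
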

\begin{obs}
Note that the  conclusion established  in this theorem, does not depend on Arthur's conjectures. What does depend on Arthur's description, is the fact that every representation appearing in the  discrete spectrum is associated to a discrete parameter $\psi$.
\end{obs}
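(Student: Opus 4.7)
The plan is to verify the remark by auditing the chain of arguments that leads to Theorem \ref{teosingexct}, checking at each step that the cited tools are classical (unconditional) results, and then isolating the precise point at which Arthur's conjectures would actually be invoked. Since the remark is a meta-statement about the proof's logical dependencies, a proof proposal amounts to a systematic bookkeeping exercise rather than a new argument.

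First, I would walk through the proof of Theorem \ref{teosingexct} in the base case of blocks of size $k = n+1$. The singularity there comes from Corollary \ref{sonclmio8}, whose ingredients are Proposition \ref{sonmiodhl1} (a local Mackey calculation that identifies the characters of $N_Q$ occurring in the restriction with alternating forms vanishing on the subspace $\fnn_B$, hence degenerate) and Proposition \ref{howelemma} (Howe's local-to-global principle for rank). The Mackey input rests only on the Bernstein-Zelevinsky description of the double cosets $N_Q \backslash \son / P$, the explicit root combinatorics of the set $B$ from \eqref{pincheb}, and a linear-algebra observation about alternating matrices with a large zero block; the cuspidality contradiction invokes Li's theorem on Fourier coefficients of maximal rank. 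Neither ingredient mentions parameters or $L$-functions at all, so both are unconditional.

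Next, I would check the extension to non-unitary characters. Theorem \ref{unitext} uses only Dixmier's direct-integral decomposition for type I groups and the weak containment criterion via matrix coefficients (Proposition \ref{equisup}); Corollary \ref{cdxrunt} then reduces the non-unitary case to the unitary one by realising $Ind_P^{\son}(\epsilon \otimes |\,|^s)$ as a compactly induced representation on a maximal compact subgroup $K$, on which the restriction to $N_\ell$ is independent of $s$. These are all standard tools. The inductive leap to $k > n+1$ uses the structural embedding $sp_{n+3} \hookrightarrow Ind_{\G_m SO_{n+2} N}^{\son}(\chi, sp_{n+1})$ iterated as in the paragraph preceding the theorem, which is elementary parabolic induction. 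I would tabulate these dependencies explicitly to make the audit transparent.

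Finally, I would pinpoint where Arthur's conjectures do enter the picture. The statement of the theorem refers to \emph{the} representations of $SO_{2n}(\Q_v)$ \emph{associated to the parameter} $\psi$: both the production of a packet of representations from a parameter of the form \eqref{parmtexcnt} and, crucially, the assertion that every discrete automorphic representation is attached to some such parameter are parts of Arthur's conjectural parametrisation. Once a representation is in hand and is assumed to belong to the discrete spectrum with the prescribed Hecke data at unramified places, the singularity/non-cuspidality argument above proceeds purely on the representation-theoretic side. The only obstacle worth flagging is to ensure that no step tacitly imports Arthur: in particular one should confirm that Li's cuspidality criterion and Howe's rank dichotomy are proved intrinsically, and that the self-duality condition forcing $\sum s_i = 0$ in the parameter is used only to constrain the shape of the inducing data, not to appeal to a conjectural classification. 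With this bookkeeping in place, the remark follows.
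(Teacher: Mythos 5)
Your proposal is correct and follows the reasoning the paper implicitly relies on: the singularity argument (Mackey restriction, degeneracy of the Fourier coefficients via Proposition \ref{sonmiodhl1}, Howe's rank principle in Proposition \ref{howelemma}, Li's nonvanishing theorem, the Dixmier/weak-containment extension to non-unitary $\chi$, and the inductive embedding for $k>n+1$) is unconditional, while Arthur's conjectures are invoked only to assert that every discrete automorphic representation of $SO_{2n}$ is attached to some discrete parameter of the form \eqref{parmtexcnt}. Since the remark has no written proof in the paper, your audit is the intended fleshing-out; the only minor overstatement is attributing the \emph{production} of the representations to Arthur's packet machinery, whereas the theorem works with concretely induced representations and needs Arthur only for completeness of the parametrization.
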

\section{Hecke operators}

\subsection{Bounds for symplectic groups}
Duke, Howe and Li in their paper \cite{DHL} estimate the eigenvalues of Hecke operators for Siegel modular forms. In their paper, they use all the power of the theory of low rank representations developed by Howe \cite{How} and the non-existence of singular cuspidal forms, a result due to Li (\cite{Li}). They consider the Hecke operator associated to the double coset in $GSp_{2n}(\Q_p)$ given by
\[
T_p= K \begin{pmatrix} \underbrace{\begin{matrix} p & & \\ & \ddots & \\ & & p \end{matrix}}_n & \begin{matrix} & & \\ & & \\ & &  \end{matrix}\\ \begin{matrix}  &  &   \\  &  &  \\  &  & \, \end{matrix} & \underbrace{\begin{matrix} 1 & & \\ & \ddots & \\ & & 1 \end{matrix}}_n \end{pmatrix} K,
\]
where $K= GSp_{2n}(\Z_p)$. Their bounds are listed in the corollary $5.3$ and $5.5$ of \cite{DHL}.

\subsection{Bounds for $\son$}
We recall that for the case of cuspidal representations of $GL_n$, we have several approximations to Ramanujan's conjecture:
\begin{enumerate}
\item $|t^{\pm 1}| < p^{\frac{1}{2}}$  due to Jacquet and Shalika, see \cite{J-S}.
\item $|t^{\pm 1}|< p^{\frac{1}{2} - \frac{1}{n^2+1}}$  due to Luo, Rudnick and Sarnak.
\end{enumerate}
We recall also that we have natural maximal compact subgroups, e.g. $\son(\Z_p):=K_p$ for the finite places. We set $K_f=\prod_p K_p$ where the product runs over all the finite places. Fix a maximal compact subgroup $K_\infty$ of $\son(\R)$. We note $K=K_\infty\times K_f\subset \son(\A)$.
\begin{defn}
We define $H_p:=H(\son(\Q_p), K_p)$ to be the algebra of functions with compact support on
$\son(\Q_p)$, bi-invariant under $K_p$ . Any function $f$ in $H_p$ is constant in the double cosets $K_p x K_p$. Since $f$ has compact support, $f$ is a finite linear combination of characteristic functions of double cosets $ch_{K_p x K_p}$. Thus, these characteristic functions give a $\Z$-base for $H_p$. We shall call this algebra, the \emph{local Hecke algebra}. We set
\[
H_f(\son, K_f)=\bigotimes_p H(\son(\Q_p), K_p)
\]
\end{defn}

We have an action of convolution of $H_f(\son, K_f)$ on the space of functions on $\son(\Q)\backslash\son(\A) /K_f$. We shall describe the action of the Hecke operators on the $K_f$-invariant vectors of a representation appearing in the spectral decomposition of $\son$. We know that any  such automorphic representation $\pi$ decomposes into a tensor product $\pi=\otimes_p \pi_p$. For almost all $p$, $\pi_p$ is an unramified representation of $\son(\Q_p)$. In that case, we know that $\pi_p$ is induced from an unramified character $\chi$ of the maximal torus $T_0(\Q_p)$. Recall that the Weyl group can be described as the quotient $W= N(T_0)/T_0$, where $ N(T_0)$ is the normalizer of $T_0$ in $\son$. Thus, this character $\chi$ is defined up to the action of $W$. We denote by $X^*(T_0)$, the group of rational characters of $T_0$ and by $X_*(T_0)$, its group of cocharacters. Recall that in our case the Langlands group $^L\son$ is $\son(\C)\times \Gamma_\Q$,  with $\Gamma_\Q$ the Galois group of $\Q$. Fix a maximal torus $\widehat{T}_0$ of $\son(\C)$ and an isomorphism
\[
X^*(\widehat{T}_0)\simeq X_*(T_0)
\]
We denote by $t_1,\dots, t_n$, a basis of $X^*(\widehat{T}_0)$. The Satake transform induces an isomorphism between $H_p=H(\son(\Q_p), K_p)\otimes \C$, the local Hecke algebra and $\C[X^*(\widehat{T}_0)]^W= \C[t_i^{\epsilon_i}]$, the algebra of polynomials that are invariant under the Weyl group (here $\epsilon_i=\pm 1$). For any function $f\in H_p=H(\son(\Q_p), K_p)$ we denote its Satake transform by $Sf$.

For the calculation of the Satake transform, we shall appeal to minimal weights. Let $\Sigma^+$ be the set of positive roots  of $\son(\Q_p)$ and $\rho$ the half-sum of the positives roots. Let $P^+$ be the positive Weyl chamber:
\[
P^+=\{ \lambda \in X_*(T_0) \mid \left<\lambda, \alpha\right> \ge 0\quad
\text{for $\alpha \in \Sigma^+$}\}.
\]

This gives a decomposition of $\son(\Q_p)$
\[
\son(\Q_p)= \bigcup_{\lambda \in P^+} K_p \lambda(p) K_p 
\]
The characteristic functions $ch_\lambda$ of the double cosets $K_p
\lambda(p) K_p$ form a basis of $H(\son(\Q_p), K_p)$. On the other hand, the elements $\lambda \in P^+$ (which can be viewed as elements of $X^*(\widehat{T}_0)$) parametrize the irreducible finite-dimensional representations $V_\lambda$ of $^L\son$, with $\lambda$ the highest weight of $V_\lambda$. For $\lambda$ a minimal weight of ${}^L\son$, the Satake transform has  the form
\begin{equation}\label{ophck02}
Sch_\lambda= p^{\left<\lambda,\rho\right>}Tr(V_\lambda)
\end{equation}
This a standard result in Shimura varieties due to Kotwittz (\cite{Kot}, theorem $2.1.3$). These Satake transforms have been calculated before in several cases (see \cite{DHL} and \cite{C-U}).

We calculate this transform in our case. Recall that the maximal torus for $\son$ has the form
$\widehat{T}_0$
\[
\widehat{T}_0= \left\{ \begin{pmatrix} t_1 & & & & &\\ & \ddots & & &
  &\\ & & t_n & & &\\ & & & t_n^{-1} & &\\ & & & & \ddots & \\ & & & &
  & t_1^{-1}\end{pmatrix} \mid t_i\in \C^\times \right\}
\]

Thus a character $\chi\in \widehat{T}_0$ is associated to $(t_1,\dots, t_n)$ by
\begin{equation}\label{ophck2.5}
t_i=\chi\begin{pmatrix} 1 & & & & & & & &\\ & \ddots & & & & & & &\\  & & p
& & & & & &\\ & & & \ddots & & & & &\\ & & & & 1 & & & &\\ & & & & & \ddots  &
& &\\ & & & & & & p^{-1} & & \\ & & & & & & & \ddots & \\ & & & & & & & &
1\end{pmatrix} 
\end{equation}
where $p$ appears in the $i$-th place.

On the other hand, the Hecke matrix of the trivial representation is the "largest" representation of $SL_2(\C)$ embedded in the orthogonal group $\son$. Since $n$ is even, this representation has degree $2n-1$. The Hecke matrix of the trivial representation is given by
\begin{equation}\label{ophck03}
t_{p,Id}=  \begin{pmatrix} p^{n-1}& & & & & & & &\\ & p^{n-2} & & & &
  & & &\\ & & \ddots & & & & & &\\ & & & p & & & & &\\ & & & & 1 & & &
  &\\ & & & &
  & 1 & & &\\  & & & & & & p^{-1}&  &\\ & & & & & & & \ddots &\\  & & & & &
  & & & p^{1-n}\end{pmatrix} 
\end{equation}
This Hecke matrix allows us to calculate the Satake transform for the characteristic functions. Consider the simplest Hecke operator,  $T_p$, defined by the minimal weight $(1, 0, \dots, 0)$:
\begin{equation}\label{ophck04}
T_p = K_p \begin{pmatrix} p & & & & \\ & 1 & & &\\  & &\ddots  & &\\ &
  & & 1 &\\ & & & & p^{-1}\end{pmatrix} K_p 
\end{equation}
The Satake transform of the characteristic function $f_p$ associated to the double coset \eqref{ophck04} can be calculated by  \eqref{ophck02}. This calculation gives
\[
Sf_p= p^a(t_1+ \cdots + t_n+ t^{-1}_n + \cdots + t^{-1}_1)
\]
where the exponent $a$ is calculated as
\[
a=\frac{1}{2}\left( \sum_{i<j}X_i-X_j + \sum_{i<j}X_i+X_j\right)
\]
evaluated in the vector $(1,0,\dots, 0)$. One obtains $a= n-1$. In particular, we have this result.
\begin{lema}\label{sataketransf}
The degree  of the characteristic function  $f_p$ of the double coset \eqref{ophck04} is given by
\begin{equation}\label{eqsattransf}
Sf_p=\widehat{T}(triv)= p^{n-1}\left(\sum_{i=0}^{n-1}(p^i +
p^{-i})  \right)
\end{equation}
\end{lema}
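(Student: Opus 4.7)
The plan is to apply Kottwitz's formula \eqref{ophck02} directly, using that $\lambda = (1,0,\dots,0)$ is a minuscule cocharacter of the maximal torus $T_0 \subset \son$, so that the double coset $K_p\lambda(p)K_p$ of \eqref{ophck04} is associated with the standard $2n$-dimensional representation of ${}^L\son = SO_{2n}(\C)$. First, I would identify $V_\lambda$ explicitly: since $\lambda$ is the first fundamental weight of $SO_{2n}(\C)$, $V_\lambda$ is the standard (vector) representation of dimension $2n$, whose character on any semisimple element with eigenvalues $t_1,\dots,t_n,t_n^{-1},\dots,t_1^{-1}$ is simply $\sum_{i=1}^{n}(t_i + t_i^{-1})$.

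Next, I would compute $\langle \lambda,\rho\rangle$ for the root system of type $\cald_n$ already written in the text. The positive roots being $\{X_i \pm X_j : 1 \le i<j\le n\}$, one has $\rho = (n-1, n-2,\dots, 1, 0)$ in the $(X_1,\dots,X_n)$-basis, hence $\langle \lambda,\rho\rangle = n-1$, recovering the exponent $a=n-1$ computed just above the lemma statement. Combining these two ingredients with \eqref{ophck02} yields
\[
Sf_p \;=\; p^{\,n-1}\left(\sum_{i=1}^{n}(t_i + t_i^{-1})\right),
\]
which is the polynomial expression already sketched in the paragraph preceding the lemma.

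Finally, to evaluate $Sf_p$ at the Hecke matrix $t_{p,\mathrm{Id}}$ of \eqref{ophck03}, I read off the eigenvalues $t_i = p^{n-i}$ for $i=1,\dots,n$ (so that $t_n = 1$ on the inner pair of entries). Reindexing via $j = n-i$, I obtain
\[
\sum_{i=1}^{n}(t_i + t_i^{-1}) \;=\; \sum_{i=1}^{n}\bigl(p^{\,n-i} + p^{\,i-n}\bigr) \;=\; \sum_{j=0}^{n-1}\bigl(p^{j} + p^{-j}\bigr),
\]
and multiplying by $p^{n-1}$ gives the formula \eqref{eqsattransf} asserted in the lemma.

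There is essentially no real obstacle: the one point deserving care is that Kottwitz's formula \eqref{ophck02} is applied to a truly minuscule cocharacter — this is true for $\lambda=(1,0,\dots,0)$ in type $\cald_n$, since the standard representation is one of the three minuscule representations of $SO_{2n}(\C)$. Once that is checked, the proof is a direct character computation for the vector representation and a reindexing of the eigenvalues of $t_{p,\mathrm{Id}}$.
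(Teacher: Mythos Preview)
Your proof is correct and follows essentially the same approach as the paper: the text preceding the lemma already computes $Sf_p = p^{n-1}\sum_{i=1}^n(t_i+t_i^{-1})$ via Kottwitz's formula \eqref{ophck02} for the minuscule weight $(1,0,\dots,0)$, and the lemma simply records the evaluation at the Hecke matrix \eqref{ophck03} of the trivial representation. Your reindexing $j=n-i$ makes explicit the final step that the paper leaves implicit.
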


Remark that according to the singularity argument given in the previous section, the cuspidal representations cannot have blocks $sp_k$ for $k \ge n$ in the case of $\son$ and $k \ge 4$ in the case of $SO_8$. This means that, if Arthur's conjectures are true, the parameters present in the cuspidal spectrum can only have blocks $sp_k$ with $k <n$. Once we consider the Hecke matrices of these cuspidal parameters we have:
\begin{teo}\label{tophkso8}
Let $\pi$ be a unitary representation of  $\so(\A_\Q)$, occurring  in the cuspidal spectrum of $\call^2(\so(\Q)\backslash \so(\A_\Q))$, unramified at $p$. We suppose that  Arthur's conjectures hold. If
\[
\pi(f_p) e_p= \lambda_p e_p
\]
where $e_p$ is the unramified vector of $\pi_p$, then we have the estimate
\begin{equation}\label{estso8}
|\lambda_p| \le p^{3}(p^{3/2}+\cdots + p^{-3/2})(\tau + \tau^{-1})
\end{equation}
where $(t, t^{-1})$ is the Hecke matrix of a  cuspidal representation of  $GL_2$ unramified at $p$,  $\tau= |t|$ and according to Kim and Sarnak, we have
\[
|\tau| \le p^{\frac{7}{64}}
\]
\end{teo}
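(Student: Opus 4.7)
The plan is to use Arthur's conjectures to attach $\pi$ to a global discrete Arthur parameter $\psi : \call_\Q \times SL_2(\C) \to \so(\C)$, apply the singularity results of the preceding section to constrain the allowed blocks of $\psi$, and then evaluate the Satake transform of $f_p$ on the resulting Hecke matrix. By the conjectures, $\psi$ decomposes under the standard embedding $\so(\C) \hookrightarrow GL_8(\C)$ as $\psi = \bigoplus_i r_i \otimes sp_{k_i}$, with $r_i$ self-dual cuspidal representations of $GL_{n_i}(\A)$, $\sum_i n_i k_i = 8$, and with orthogonality of $\psi$ imposing the usual parity condition linking the orthogonal/symplectic type of $r_i$ to the parity of $k_i$.

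Since $\pi$ lies in the cuspidal spectrum, Theorem \ref{teosingexct} together with Corollaries \ref{sonclmio8} and \ref{cdxrunt} forbid blocks of the form $\triv \otimes sp_k$ with $k$ exceeding the threshold identified in the previous section. A finite case analysis of the dimension constraint $\sum n_i k_i = 8$ together with the self-duality/parity constraints then shows that the configuration producing the largest eigenvalue of the Hecke matrix is the single block $\psi = r \otimes sp_4$, with $r$ a symplectic cuspidal representation of $GL_2$. Competing configurations such as a tempered decomposition $\bigoplus_i r_i$ (bounded via Kim--Sarnak on each factor), or mixtures like $r_1 \otimes sp_3 \oplus r_2$ and $r_1 \otimes sp_2 \oplus r_2 \otimes sp_2$, are each checked to yield a smaller contribution for large $p$, using the explicit formula $\mathrm{Tr}\bigl(sp_k(\mathrm{diag}(p^{1/2}, p^{-1/2}))\bigr) = p^{(k-1)/2} + p^{(k-3)/2} + \cdots + p^{-(k-1)/2}$.

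For the extremal parameter $\psi = r \otimes sp_4$, the unramified Hecke matrix $t_{\psi,p} = r(\mathrm{Frob}_p) \otimes sp_4(\mathrm{diag}(p^{1/2}, p^{-1/2})) \in \so(\C)$ has trace $(t + t^{-1})(p^{3/2} + p^{1/2} + p^{-1/2} + p^{-3/2})$ in the standard representation, where $(t, t^{-1})$ are the Hecke parameters of $r$ at $p$. Lemma \ref{sataketransf}, extended to an arbitrary semisimple Hecke class via the Kotwitz formula \eqref{ophck02}, yields $\lambda_p = Sf_p(t_{\psi,p}) = p^{n-1} \mathrm{Tr}(t_{\psi,p}) = p^{3}(t + t^{-1})(p^{3/2} + p^{1/2} + p^{-1/2} + p^{-3/2})$. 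The Kim--Sarnak estimate $|t| \le p^{7/64}$ applied to the cuspidal $GL_2$ representation $r$ then gives $|t + t^{-1}| \le \tau + \tau^{-1}$, and the stated bound follows. The main obstacle is the case analysis in the second paragraph: the singularity results of the previous section were formulated only for blocks $\triv \otimes sp_k$, so one must verify that the remaining possibilities with non-trivial character twists such as $\chi \otimes sp_k$ either fall under a refinement of the same singularity argument or yield strictly smaller Hecke eigenvalue contributions than $r \otimes sp_4$.
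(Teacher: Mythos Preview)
Your proposal is correct and follows the paper's approach. The paper does not write out a formal proof of this theorem; it is presented as an immediate consequence of the remark preceding it (that the singularity results of Section~3 exclude large $sp_k$ blocks from cuspidal parameters) together with the Satake formula $Sf_p = p^{n-1}(t_1 + \cdots + t_n + t_n^{-1} + \cdots + t_1^{-1})$ derived just above. Your case analysis identifying $r \otimes sp_4$ (with $r$ a self-dual cuspidal representation of $GL_2$) as the extremal admissible parameter is exactly the computation the paper leaves implicit, and your evaluation of the Satake transform on that Hecke class reproduces the stated bound.

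One small correction to your closing caveat: the singularity results of Section~3 are in fact stated for $\epsilon \otimes sp_k$ with $\epsilon$ an arbitrary quadratic character, not only $\epsilon = \triv$ (see equation~\eqref{parmtexcnt} and Theorem~\ref{teosingexct}). So the ``remaining possibilities with non-trivial character twists'' that you flag as an obstacle are already covered, and no refinement is needed there. What is genuinely \emph{not} excluded at this stage is the block $r \otimes sp_4$ itself; the paper only shows later (Theorem~\ref{8teo1ja}) that this parameter contributes purely residually, which is why the bound here is subsequently improved in Theorem~\ref{8teo2ver}.
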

We have the analogous result for the case of $\son$.
\begin{teo}\label{tophkso2n}
Let $\pi$ be  a unitary representation of $\son(\A_\Q)$, occurring in the cuspidal spectrum of 
$\call^2(\so(\Q)\backslash \son(\A_\Q))$ and unramified at $p$. We suppose that  Arthur's conjectures hold. If we have
\[
\pi(f_p) e_p= \lambda_p e_p
\]
with  $e_p$ the unramified vector of $\pi_p$, then we have the estimate
\begin{equation}\label{estso2nsnada}
|\lambda_p| = O\left(p^{n-1+ \frac{n-1}{2}-\epsilon(n)}\right),
\end{equation}
where $\epsilon(n)= \frac{1}{n^2+1}$ with the known estimates dues to Luo, Rudnick and Sarnak. If
 Ramanujan's conjecture holds, we have the estimate
 \begin{equation}\label{estso2nram}
|\lambda_p| = O\left(p^{n-1+ \frac{n-2}{2}}\right)
\end{equation}
\end{teo}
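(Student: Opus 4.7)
The plan is to combine three ingredients: the Satake transform formula of Lemma~\ref{sataketransf}, the constraint on Arthur parameters of cuspidal representations derived in Section~3, and the Luo--Rudnick--Sarnak (respectively Ramanujan) bound for the Satake parameters of cuspidal automorphic representations of $GL_d$.

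First I would write $\pi_p$ as an unramified principal series with Hecke matrix of the form $\mathrm{diag}(x_1, \ldots, x_n, x_n^{-1}, \ldots, x_1^{-1})$. Since $\pi(f_p)$ acts on the spherical line by the Satake transform evaluated at this Hecke matrix, Lemma~\ref{sataketransf} yields
\[
\lambda_p \;=\; p^{\,n-1}\bigl(x_1 + \cdots + x_n + x_n^{-1} + \cdots + x_1^{-1}\bigr),
\]
so it suffices to control $\max_i |x_i|$.

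Next I invoke Arthur's conjectures to write the parameter of $\pi$ as $\psi = \bigoplus_i r_i \otimes sp_{a_i}$, with each $r_i$ a unitary cuspidal automorphic representation of some $GL_{d_i}$ and $\sum_i d_i a_i = 2n$. The Satake parameters of $\pi_p$ are then the collection $\{\, s_j^{(i)}\, p^{(a_i-1)/2 - \ell} : 1 \le j \le d_i,\ 0 \le \ell \le a_i-1\,\}$, where $(s_1^{(i)}, \ldots, s_{d_i}^{(i)})$ is the Hecke matrix of $(r_i)_p$. Now Theorem~\ref{teosingexct}, together with the residuality of the parameter built in Section~2, rules out any block $sp_{a_i}$ with $a_i \ge n$ in a cuspidal $\psi$, forcing $a_i \le n-1$. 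Since $d_i a_i \le 2n$, every block with $a_i \ge 2$ satisfies $d_i \le n$, so the Luo--Rudnick--Sarnak bound applied to $r_i$ gives
\[
|s_j^{(i)}| \;\le\; p^{\,1/2 - 1/(d_i^2+1)} \;\le\; p^{\,1/2 - 1/(n^2+1)},
\]
while blocks with $a_i = 1$ contribute only eigenvalues $s_j^{(i)}$ with no $p$-power factor, yielding a strictly smaller bound.

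Assembling the pieces, the maximal eigenvalue satisfies
\[
\max_i |x_i| \;\le\; p^{\,1/2 - 1/(n^2+1)} \cdot p^{(n-2)/2} \;=\; p^{(n-1)/2 - 1/(n^2+1)},
\]
and absorbing the factor $2n$ from summing the eigenvalues into the implicit constant yields \eqref{estso2nsnada}. Under Ramanujan one replaces the LRS bound by $|s_j^{(i)}| \le 1$, obtaining $\max|x_i| \le p^{(n-2)/2}$ and hence \eqref{estso2nram}. The main obstacle is justifying the constraint $a_i \le n-1$ in full generality: Theorem~\ref{teosingexct} explicitly handles only blocks $\mathbf{1} \otimes sp_k$ with $k$ odd and $k > n+1$, so one must extend the singularity argument to blocks $r_i \otimes sp_{a_i}$ with $r_i$ of arbitrary dimension and even values of $a_i \ge n$. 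I would do this by embedding any such hypothetical cuspidal parameter into an induced representation from a maximal parabolic whose Levi has a trivial $SO_{2k}$-factor with $2k \ge n+2$, then applying Proposition~\ref{sonmiodhl1} to show its restriction to the Siegel unipotent is supported on degenerate characters, contradicting the non-vanishing of a maximal-rank Fourier coefficient for a genuine cusp form provided by Li's theorem.
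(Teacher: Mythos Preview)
Your approach matches the paper's exactly: the paper gives no formal proof of Theorem~\ref{tophkso2n}, only the remark preceding Theorem~\ref{tophkso8} that cuspidal parameters cannot contain blocks $sp_k$ with $k\ge n$, after which the Satake computation (Lemma~\ref{sataketransf}) together with the Luo--Rudnick--Sarnak bound yields the estimate. You have reconstructed this computation correctly.

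Your identification of a gap in justifying $a_i\le n-1$ is fair, but the fix you propose is both more elaborate than necessary and would not cover all cases. The missing observation is the dimension constraint: in any block $r_i\otimes sp_{a_i}$ of a discrete parameter for $\son$ one has $d_ia_i\le 2n$, so $a_i\ge n$ forces $d_i\le 2$. If $d_i=1$ the block $\epsilon\otimes sp_{a_i}$ must be of orthogonal type, hence $a_i$ is odd; the cases $a_i\ge n+1$ are then exactly those handled in Section~3 and Theorem~\ref{teosingexct}, so your concern about even values of $a_i$ with $d_i=1$ is vacuous. If $d_i=2$ one is forced to $a_i=n$ and $d_ia_i=2n$, so $r_i\otimes sp_n$ is the \emph{entire} parameter: this is precisely the parameter of Section~2, shown to contribute only to the residual spectrum (Theorem~\ref{tchin2} and, under Arthur's results, Theorem~\ref{8teo1ja}). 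Your proposed embedding into an induced representation with a trivial $SO_{2k}$-factor cannot treat this last case, since the local components of $r\otimes sp_n$ with $r$ a genuine two-dimensional cuspidal representation are not subquotients of such an induction; the paper's route through the explicit Eisenstein residue is the correct one here.
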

In the next sections we shall show that, with  Arthur's results, these estimates can be ameliorated for the parabolic forms.


\section{Local results}
In this section we compare the trace formula for the group $GL_{2n}$ and for the split form of the orthogonal group $SO_{2n}$. The group $\son$ is an endoscopic maximal subgroup of $\gnn \rtimes \theta$, the non-neutral component of the non-connected subgroup $\gnn\rtimes (\Z/2\Z)$. In this section $k$ denotes a local field.

\subsection{Calculation of characters}
In the subsection \ref{artasctele} we shall recall the notion of associated elements, that we will use freely in this subsection.

 It will be more useful to work with the split form of $\son$ given by the matrix $J^*_0$,
\[
J^*_0=\begin{pmatrix} & 1_n\\ 1_n & \end{pmatrix} 
\]

In the case of the group $\gnn$, we consider the Levi subgroup $M := GL_n$, that imbeds naturally into $\son$ as
\[
g\in GL_n\quad \mapsto \begin{pmatrix} g & \\ & ^tg^{-1}\end{pmatrix} \in \son
\]
The representation we are going to consider is the induced representation
\begin{equation}\label{art6eq1}
Ind_{GL_n N}^{\son}(|det|^s):= \pi
\end{equation}
where $N$ is the unipotent subgroup of the parabolic subgroup containing $M$, and $s\in \C$. From the side of $\gnn$, we consider the Levi subgroup $L:= GL_n \times GL_n$ and the representation
\begin{equation}\label{6eq1.1}
Ind_{GL_n\times GL_n N^\prime}^{\gnn}(|det(m_1)|^s |det(m_2)|^{-s}):= \Pi
\end{equation}
where $N'$ is the corresponding unipotent subgroup.\footnote{Please note that in this section, we are using the notations $\pi$ and $\Pi$ for representations that are different from those of past chapters.}

\begin{obs}
We know (see \cite{Cl1}) that the character $\Theta_\pi(g)=0$, except if $g$ is conjugate to an element of $M$. Also, since it is enough to know $\Theta_\pi(g)$ almost everywhere, we shall calculate this character only on the regular elements.
\end{obs}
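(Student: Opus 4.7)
The Remark makes two assertions: (i) $\Theta_\pi(g)=0$ unless $g$ is $\son$-conjugate to an element of $M=GL_n$; (ii) it suffices to evaluate $\Theta_\pi$ on the regular set. Both are standard consequences of general character theory, but let me sketch how I would verify them in the situation at hand.

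For assertion (i) the plan is to invoke the Van Dijk / Harish-Chandra character formula for a parabolically induced representation. For $\pi=\mathrm{Ind}_{MN}^{\son}(|\det|^s)$ and a regular semisimple $g\in\son$, the formula takes the form
\[
\Theta_{\pi}(g)\;=\;\sum_{\substack{x\in P\backslash \son \\ xgx^{-1}\in M}}\frac{|D_{\son}(g)|^{1/2}}{|D_M(xgx^{-1})|^{1/2}}\,\Theta_{|\det|^s}(xgx^{-1}),
\]
where $D_{\son}$ and $D_M$ are the usual Weyl denominators. The sum is indexed by those cosets $xP$ for which the conjugate $xgx^{-1}$ actually lies in the Levi $M$. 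If the $\son$-conjugacy class of $g$ does not meet $M$, this indexing set is empty and the character vanishes at $g$. Thus the only work is to quote the formula from \cite{Cl1} (or the general treatment of Harish-Chandra) and check that our $\pi$ satisfies its hypotheses, which is immediate since $\pi$ is parabolically induced from an admissible representation of the Levi.

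For assertion (ii) the argument is Harish-Chandra's regularity theorem: the distribution character of any admissible representation of a reductive group over a local field is represented by a locally integrable function on $\son(k)$ which is analytic on the regular semisimple locus $\son(k)^{\mathrm{reg}}$. Since the complement of $\son(k)^{\mathrm{reg}}$ has Haar measure zero, two locally integrable functions that agree on $\son(k)^{\mathrm{reg}}$ define the same distribution character. Hence, knowing $\Theta_\pi$ on regular elements determines it as a distribution, and we lose no information by restricting the computation to the regular locus. Combining (i) and (ii) reduces the whole character computation to evaluating $\Theta_\pi$ on the intersection $M^{\mathrm{reg}}\cap\son^{\mathrm{reg}}$.

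The only subtle point I foresee is bookkeeping with the normalization of the Weyl denominator factor $|D_{\son}/D_M|^{1/2}$, which depends on conventions and on whether the induction is unitary or not; but since we only need qualitative vanishing of $\Theta_\pi$ off the image of $M$, this normalization plays no role, and for the later quantitative matching against the twisted character of $\Pi$ on $GL_{2n}\rtimes\theta$ it is handled by the transfer factor appearing in the endoscopic identity (as will be made explicit in the next subsections). Thus the Remark follows directly from Clozel's reference together with Harish-Chandra regularity, and no further argument is required here.
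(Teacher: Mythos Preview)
Your proposal is correct. The paper does not prove this remark at all---it simply states the two facts and cites \cite{Cl1}; immediately afterward it invokes exactly the van Dijk character formula you describe (equation \eqref{art6eq2}), so your justification is precisely the content underlying the citation. One small bookkeeping slip: in your displayed formula the ratio of Weyl discriminants is inverted relative to the paper's convention (the paper has $|D_M|^{1/2}/|D_{\son}|^{1/2}$), but as you yourself note this normalization is irrelevant for the vanishing claim, which rests only on the emptiness of the index set when $g$ is not conjugate into $M$.
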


Let $T_0$ be the maximal torus of the center of $M$, so $T_0$ is the split component of $M$ denoted by $A_M$. So $A_M=T_0= Diag(t, \dots, t, t^{-1}, \dots, t^{-1})$. On the other hand, if $S$ is a maximal torus of $M= GL_n$, we can associate to it a partition of $n=  n_1+\cdots + n_r$, such that each $n_i$ corresponds to an extension of degree $n_i$ of a field $K_{n_i}/k$. Thus, $K_{n_i}^\times \subset GL_{n_i}(k)$ and
\[
S=  K_{n_1}^\times \times \cdots \times K_{n_r}^\times
\]
For the calculation of the character we follow an article of van Dijk \cite{vDj}. In van Dijk's formulation, the character of $\pi$ on a regular element $\gamma$ is
\begin{equation}\label{art6eq2}
\Theta_\pi(\gamma)=\sum_{j\in W(A, A_S)}\theta_{j\rho}(\gamma)\frac{|D_M^{j}(\gamma)|^{\frac{1}{2}}}{|D_{\son}(\gamma)|^{\frac{1}{2}}},
\end{equation}
where $W(A,A_S)$ is the set of embeddings $j:A \to A_S$ that are $\son$-realized, i.e., they come from an inner automorphism. The set $W(A,A_S)$ is in general difficult to describe. The term $D_{\son}(\gamma)$ is defined in the following way. Let $t$ be an indeterminate and $l$ the rank of $\son$. So $D_{\son}(\gamma):= D(x)$
\[
det(t+1-Ad(x))= D(x)t^l + \text{terms of higher degree}
\]
Also, for the regular elements $x\in \son$, we have $D(x)\neq 0$.  We can calculate this in terms of roots.
\[
det(t-(Ad(x)-1))= t^l\prod_\alpha (t -(x^\alpha -1))
\]
where the product is over all the roots $\alpha$. Therefore, we have
\[
D_{\son}(x)= \prod_\alpha (x^\alpha-1)= det(Ad(x)-1)|_{\mathfrak{so}/\mathfrak{t}}
\]
We have denoted by $\mathfrak{so}$ and $\mathfrak{t}$, the Lie algebras of $\son$ and $T_0$, respectively. When we make this same calculation for the Levi subgroup $M$, we obtain 
\[
D_M(x)= det(Ad(x)-1)|_{\mathfrak{m}/\mathfrak{t}}
\]
Therefore,
\[
D_{M}(x)= det (Ad(x)-1)|_{\mathfrak{m}/\mathfrak{t}}= \prod_{\substack{\alpha >0,\\ \alpha \notin M}} (x^\alpha -1)\prod_{\substack{\alpha <0, \\ \alpha \notin M}} (x^\alpha -1)= \prod_{\substack{\alpha >0,\\ \alpha \notin M}} (x^\alpha -1)(x^{-\alpha}-1)
\]
This term is stably invariant since it is algebraic. Thus, it is enough to calculate it for an element $x \in T(\overline{F})= \{ x_1, \dots, x_n, x_1^{-1}, \dots, x_n^{-1} \}$.

A straightforward calculation shows, that this term in the twisted case is
\begin{equation}\label{art6eqtord00}
D^\theta_{L}(x) = \prod_{i\neq j}(x_i x_j -1)(x_i^{-1}x_j^{-1}-1)\prod_{i=j}(x_i+1)(x_i^{-1}+1) 
\end{equation}
This implies that in the case of an elliptic torus, i.e., when there is only one field $K_n$, the set $W(A, A_S)$ acts on the torus $A_S$ in only two ways:
\[
W(A, A_S) = \begin{cases} (x, x^{-1})   \to(x, x^{-1})\\(x, x^{-1}) \to (x^{-1}, x)  \end{cases}
\]
However, it is not necessary to give an explicit description of this set for every possible torus $S \subset M$. Rather, we need to understand how this set is related to the one given in the twisted case. So, the problem is how to associate to a torus $S \subset M$, a torus $T$ contained in $L= GL_n\times GL_n$. This problem is very closely related to the construction of the \emph{norm} for the elements of $\son$. The abstract construction of the norm is due to Kottwitz and Shelstad \cite{K-S} and in certain cases it has been described explicitly by Waldspurger (see \cite{Wal}).

We are going to recall briefly the construction of the norm associating the (stable) conjugacy classes in $\son$ to the (stable) twisted conjugacy classes in $\gnn$. In the next subsection we shall construct the norm in our case, which is simpler since conjugacy and stable conjugacy are the same for the regular elements of $M$. Thus, if we consider a torus $S$, the norm $\cala$ of $S$  can be described by the $\theta$-torus contained in $L= GL_n\times GL_n$:
\[
T= \{ (x,1) \mid x  \in S\} = S\times 1
\]
Note that we need to calculate also the twisted centralizer $T'$ (which is also a torus) of $T$ in $L$. So, let $m= (m_1, m_2)\in L$ be an  element in the $\theta$-centralizer of $T$. We will use the notation $m^\theta$ for $\theta(m)$. Let $t\in T$, then
\[
\theta(m)t m^{-1}= m^\theta t m^{-1}= (^tm^{-1}_2, ^tm^{-1}_1)(x,1)(m_1^{-1},m_2^{-1})= t= (x,1)
\]
This implies that $^tm^{-1}_1=  m_2$ on one hand, and on the other, $m_1 x m^{-1}_1= x$. From this we deduce that $m_1 \in S$ and that it commutes with $x\in S$. Hence, the $\theta$-centralizer $T'$ is
\[
T'= \{ (t, ^t t^{-1})\mid t \in S \}.
\]
Since $T'$ has dimension $n$, it follows that it is the $\theta$-centralizer in $\gnn$. Next, we describe the set of embeddings $\widetilde{j}:A_{L}^\theta \to A_{T^\prime}$, where the embedding $\widetilde{j}$ is realized in $\gnn\rtimes \theta$. Thus, $\widetilde{j}$ has the form $\widetilde{j}(a) = (Ad(g)\circ \theta) (a)$, $g\in \gnn$.

In general, let $T'$ be the $\theta$-centralizer of a torus $T$ that is the image under the norm of a torus $S \subset M$. Next, we compare the embeddings $j: A_M \to A_T$ and $\widetilde{j}:A_{L}^\theta \to A_{T^\prime}$. We have, thus, a diagram like this
\[
\xymatrix{ A_M \ar[r]^{j} \ar[d]^{\simeq} & A_T \ar[d]^{\simeq}\\ A_{L}^\theta \ar[r]^{\widetilde{j}} & A_{T^\prime}
}
\]
\begin{lema}\label{art6lema}
If $j$ is $\son$-realized, then  $\widetilde{j}$ (determined by the diagram) is $\gnn,\theta$-realized.
\end{lema}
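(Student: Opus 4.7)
My plan is to take $\widetilde{g} := g$, the same element of $\son$ that realizes $j$, now viewed as an element of $\gnn$ via the inclusion $\son \hookrightarrow \gnn$. The key observation is that $\son$ is the $\theta$-fixed subgroup of $\gnn$, so for any $x$ with $\theta(x)=x$ one has the tautology $g\cdot\theta(x)\cdot g^{-1} = g x g^{-1}$ inside $\gnn$. Both the source elements in $A_L^\theta$ and the target elements $j(a) \in A_S \subset M \subset \son$ satisfy $\theta(x)=x$, so the twisted calculation should collapse to the ordinary calculation already given by the realization of $j$.

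The first step would be to make the two vertical identifications of the diagram explicit as restrictions of maps into $\gnn$. The isomorphism $A_M \simeq A_L^\theta$ is induced by the block-diagonal embedding $M \hookrightarrow L$, $m \mapsto (m,{}^t m^{-1})$, which on centers sends $tI_n$ to $(tI_n,t^{-1}I_n)$; viewed inside $\gnn$ both sides are literally the diagonal matrix $\mathrm{diag}(tI_n,t^{-1}I_n)$, so this iso is an equality of subgroups of $\gnn$. For the right column, although $A_T \subset L$ and $A_{T'} \subset L$ are distinct subgroups, the further identifications with $A_S$ are compatible with the composite embedding $A_S \subset M \subset \son \subset \gnn$: the map $A_S \to A_{T'}$, $s\mapsto (s,{}^t s^{-1})$, is the restriction of $M\hookrightarrow L$, and $A_S \subset \son$ sits inside $\gnn$ as $\mathrm{diag}(s,{}^t s^{-1})$, which is exactly $(s,{}^t s^{-1}) \in A_{T'}$. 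Hence $A_S \subset \son$ and $A_{T'} \subset L$ coincide as subsets of $\gnn$.

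With these identifications in hand, I would argue as follows. The hypothesis says $j(a) = g a g^{-1}$ in $\son$ for some $g \in \son$ and every $a \in A_M$. Given $x \in A_L^\theta$, let $a \in A_M$ be its preimage under the left vertical iso; by the preceding paragraph, $x = a$ as elements of $\gnn$. Since $\theta(x)=x$, one computes $g\,\theta(x)\,g^{-1} = g x g^{-1} = g a g^{-1} = j(a)$ in $\gnn$. The element $j(a) \in A_S \subset \son$ is, again by the previous paragraph, the same element of $\gnn$ as its image $\widetilde{j}(x) \in A_{T'}$ under the right vertical iso. Therefore $\widetilde{j}(x) = g\,\theta(x)\,g^{-1}$, i.e.\ $\widetilde{j} = \mathrm{Ad}(g)\circ\theta|_{A_L^\theta}$ is realized in $\gnn\rtimes\theta$ by $\widetilde{g}=g$.

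The main --- and essentially the only --- obstacle is verifying that the two vertical isomorphisms of the diagram are induced by the single inclusion $\son\hookrightarrow\gnn$ (through the block diagonal $M \hookrightarrow L$), and in particular that the right-hand identification $A_S\simeq A_{T'}$ is literally an equality of subgroups of $\gnn$ rather than just a formal isomorphism of abstract tori. Once this matching is pinned down, the lemma reduces to the tautology that twisted conjugation by a $\theta$-fixed element on $\theta$-fixed elements coincides with ordinary conjugation, so no new calculation beyond the one realizing $j$ is required.
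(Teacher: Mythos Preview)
Your argument is correct and considerably more transparent than the paper's. The key point you isolate --- that the vertical identifications in the diagram are induced by the single inclusion $\son\hookrightarrow\gnn$, so that $A_M=A_L^\theta$ and the torus of $\son$ coming from $S$ coincides with $A_{T'}$ as subgroups of $\gnn$ --- reduces the lemma to the tautology $g\,\theta(a)\,g^{-1}=g a g^{-1}$ for $g\in\son\subset\gnn^{\theta}$ and $a\in A_L^\theta$. (One small point of phrasing: the $A_T$ in the upper right of the diagram should be read as the split part of the maximal torus of $\son$ containing the regular element, i.e.\ the image of $S$ under $M\hookrightarrow\son$; your sentence ``$A_T\subset L$'' momentarily conflates this with $A_{S\times 1}$, but since you immediately pass through $A_S$ and land in $A_{T'}=\{(s,{}^ts^{-1})\}$, which \emph{is} the torus in $\son$, the argument is unaffected.)

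The paper proceeds differently: it asserts $\theta(a)=a^{-1}$ and argues that inversion on $A_M$ is realised by conjugation in $\gnn$, concluding that \emph{every} $\widetilde{j}$ (not only those coming from $\son$-realised $j$) is $\gnn$-realised --- hence $\gnn,\theta$-realised, since $\theta$ is trivial on $A_L^\theta$. The bulk of the paper's proof is then really devoted to analysing which $\widetilde{j}$ arise from $\son$-realised $j$'s versus those requiring the extra Weyl element $w$, which is the content of the subsequent Lemma~\ref{setsj}. Your approach proves exactly the stated lemma with no detour, at the cost of not setting up that dichotomy; the paper's route is longer here but feeds directly into the classification $\{\widetilde{j}\}=\{j\}\amalg\{w\circ j\}$ used in the character comparison.
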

\begin{proof}
It is clear that $A_M \simeq A_L^\theta$ and $A_T \simeq A_{T^\prime}$. Let $j(a)=gag^{-1}$, $g\in \son$ and $a\in A_M$. We have $\theta(a)= a^{-1}$. Thus, if $x \in \gnn$
\[
x\theta(a)x^{-1}= xa^{-1}x^{-1}
\]
but the morphism $a \to a^{-1}$ is realized by conjugacy in $\gnn$. Therefore, if we consider any morphism $j$ as the associated morphism $\widetilde{j}$, then $\widetilde{j}$ is $\gnn$-realized. Let $\widetilde{j} = gag^{-1}$ be now an embedding $A_L^\theta \to A_{T^\prime}\simeq A_T$. We know that $A_T$ is a product of tori
\[
A_T= (\overbrace{\Q_p^\times \times \cdots \times \Q_p^\times}^{\text{$r$ times}})\times (\overbrace{\Q_p^\times \times \cdots \times \Q_p^\times}^{\text{$r$ times}})
\]
\[
(x, x^{-1}) \mapsto (x_1, \dots, x_r, x^{-1}_1, \dots, x^{-1}_r)
\] 
Therefore, for each coordinate, we have either $x\mapsto x$ or $x  \mapsto x^{-1}$. We can represent this by
\[
(x, x^{-1}) \mapsto (x^{\epsilon_1}, \dots, x^{\epsilon_r},  x^{-\epsilon_1}, \dots,  x^{-\epsilon_r})
\]
We have, thus, two possibilities.
\begin{enumerate}
\item  The morphism $(x, x^{-1}) \mapsto (x^{\epsilon_1}, \dots, x^{\epsilon_r},  x^{-\epsilon_1}, \dots,  x^{-\epsilon_r})$ is such that the product of morphisms 
\[
\epsilon_1^{n_1}\cdots \epsilon_r^{n_r}= 1
\]
\item This morphism is not realized in $\son$. Therefore we have
\[
\epsilon_1^{n_1}\cdots \epsilon_r^{n_r}= -1,
\]
hence, the morphism has the form $x \to x^{-1}$. We denote it by $\underline{\epsilon}$. Without loss of  generality, we can assume $\epsilon_1=\underline{\epsilon}$. Therefore, the first block $n_1$ has to be odd. Then the morphism $j$ has the form
\[
j(x,x^{-1})= (\overbrace{x^{-1}, \dots , x^{-1}}^{n_1}, x^{\epsilon_2}, \dots, x^{\epsilon_r}, \overbrace{x^{}, \dots, x^{}}^{n_1}, x^{-\epsilon_2}, \dots,  x^{-\epsilon_r} ).
\] 
\end{enumerate}
Let $w$ be the matrix exchanging the two $n_1$ blocks (in the first and second $n$ coordinates). Thus, after composing with $w$, we obtain 
\[
w\circ j(x, x^{-1})= (\overbrace{x, \dots , x}^{n_1},  x^{\epsilon_2}, \dots, x^{\epsilon_r}, \overbrace{x^{-1}, \dots, x^{-1}}^{n_1} x^{-\epsilon_2}, \dots,  x^{-\epsilon_r} )
\]
Therefore, $w\circ \widetilde{j}$ is realized in $\son$.
\end{proof}

In conclusion, we have characterized completely the set $\{\widetilde{j}\}$.

\begin{lema}\label{setsj}
 The set of the morphisms $\widetilde{j}$ has the form
 \[
\{ \widetilde{j}\}= \{j\} \coprod \{w\circ j\}
\]
\end{lema}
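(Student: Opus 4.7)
The plan is to extract this statement as essentially a corollary packaging the classification already carried out inside the proof of Lemma \ref{art6lema}. I would first recall that any embedding $\widetilde{j}:A_{L}^{\theta}\to A_{T'}$, once we identify $A_{T'}\simeq A_T$ and decompose according to the partition $n=n_1+\cdots+n_r$ coming from $S$, is encoded by a sign vector $\underline{\epsilon}=(\epsilon_1,\dots,\epsilon_r)\in\{\pm 1\}^r$ sending $(x,x^{-1})$ to $(x^{\epsilon_1},\dots,x^{\epsilon_r},x^{-\epsilon_1},\dots,x^{-\epsilon_r})$. Thus the set $\{\widetilde{j}\}$ is naturally indexed by such sign vectors (up to the Weyl action on $A_T$), and I need to organize them into the two announced classes.

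Next, I would invoke the dichotomy established inside the proof of Lemma \ref{art6lema}: for each such $\widetilde{j}$, either
\begin{enumerate}
\item $\epsilon_1^{n_1}\cdots\epsilon_r^{n_r}=1$, in which case $\widetilde{j}$ is already $\son$-realized and hence, by the forward part of Lemma \ref{art6lema}, coincides with some $j\in W(A,A_S)$; or
\item $\epsilon_1^{n_1}\cdots\epsilon_r^{n_r}=-1$, in which case $\widetilde{j}$ is not $\son$-realized, but the explicit computation in that proof produces a Weyl element $w$ (exchanging the two $n_1$-blocks) such that $w\circ\widetilde{j}$ \emph{is} $\son$-realized. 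Writing $j':=w\circ\widetilde{j}$, we get $\widetilde{j}=w\circ j'$ with $j'\in\{j\}$.
\end{enumerate}
This exhausts all possibilities for $\widetilde{j}$, so $\{\widetilde{j}\}\subset\{j\}\cup\{w\circ j\}$. The opposite inclusion is immediate: every $j$ is a legitimate $\widetilde{j}$ by Lemma \ref{art6lema}, and applying $w$ (which lies in the Weyl group of $\gnn$ and hence produces another $\gnn,\theta$-realized embedding) preserves the property of being a $\widetilde{j}$.

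The final step is disjointness of $\{j\}$ and $\{w\circ j\}$, which justifies the coproduct symbol. Here I would use the character $\underline{\epsilon}\mapsto \prod_i\epsilon_i^{n_i}$: by construction it equals $+1$ on the embeddings of type (1) and $-1$ on those of type (2), so the two families cannot overlap. The only point that needs a minor sanity check is that this invariant is well-defined modulo the residual Weyl action on $A_T$ (permutations within each $n_i$-block and the simultaneous flip coming from $\son$'s Weyl group)---and both of those operations preserve $\prod_i\epsilon_i^{n_i}$, so the invariant descends. I do not expect any serious obstacle here; the real content was already in Lemma \ref{art6lema}, and Lemma \ref{setsj} is simply the clean bookkeeping of that case analysis.
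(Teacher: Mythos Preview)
Your proposal is correct and follows the same approach as the paper: the paper gives no separate proof of Lemma~\ref{setsj} but simply states it after the sentence ``In conclusion, we have characterized completely the set $\{\widetilde{j}\}$,'' treating it as the bookkeeping summary of the dichotomy already worked out inside the proof of Lemma~\ref{art6lema}. You have made explicit what the paper leaves implicit, including the disjointness via the sign invariant $\prod_i\epsilon_i^{n_i}$, which is a reasonable addition.
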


Next, we calculate the characters for both groups with the given data. For $\son$, we have
\begin{equation}\label{art6eq5}
\Theta_{\son, \pi}(t)= \sum_{j:A_M \to A_T} \theta_{j\delta}(t)\frac{|D_{M^j}(t)|^{1/2}}{|D_{\son}(t)|^{1/2}}.
\end{equation}
We know that  $j(A_M)\subset A_T$ and thus $T$ commutes with  $j(A_M)$, which implies that  $T\subset j(M)$ and $j(M)= gMg^{-1}$ if $g$ is a representative of the embedding $j$.

The term $\delta$ corresponds to the inducing representation $|det(m)|^s$. Then $\theta_{j \delta}(t)= \delta(g^{-1}tg)$. Furthermore, 
\[
D_{j(M)}(t)= D_M(g^{-1}t g),
\]
and the terms  $|D_{M^j}(t)|^{1/2}$  are invariant by conjugacy.

Next, we consider the twisted analogue of the formula for characters. We consider now the representation
\[
\Pi= Ind^{GL_{2n}}_{GL_n\times GL_n \times N_{2n}} |det(m_1)|^s
\]
We need to define an intertwining operator $A_\theta$ between $\Pi$ and $\Pi\circ \theta$. Note that the parabolic subgroup $P= GL_n \times GL_n \times N'$ is $\theta$-stable. On the space $\C$ of the inducing representation $|det(m_1)|^s$, which will be denoted by $\delta_{2n}$, we define simply $A_\theta=1$. This  intertwines $\delta_{2n}$ and $\delta_{2n}\circ \theta$. Thus,
\[
A_\theta f(g) = f (\theta g)
\]
We shall note $\Theta_{\gnn, \theta}^{\Pi} (g)$ the twisted character of $\Pi(g)A_\theta$ for a regular element $g\in \gnn$, (that means that the conjugacy class associated to it through the norm is regular in $\son$). It is a locally constant function on the set of regular elements (see for instance \cite{Cl1}).

The character $\Theta_{\gnn, \theta}^{\Pi} (g)$ has, as support, the set of regular elements of $\gnn$ that are  $\theta$-conjugate to $M$ (\cite{Cl1}). Following line by line van Dijk (\cite{vDj}), we obtain now, in the twisted case:
\begin{equation}\label{art6eqvdj0}
\Theta_{\gnn, \theta}^{\Pi}(t)= \sum_{\widetilde{j}}\Theta_{\widetilde{j}\delta_{2n},\theta}(t)\frac{|D_{L^{\widetilde{j}},\theta}(t)|^{1/2}}{|D_{\gnn,\theta}(t)|^{1/2}}
\end{equation}
In this formula, $\Theta_{\widetilde{j}\delta_{2n},\theta}(t)$ is the twisted character (equal to the character since $A_\theta= 1$) of $\delta_{an}$, transported by $\widetilde{j}$ (that maps the $\theta$-torus containing $t$ into $M$) and evaluated at $t$. The term $D_{M^{\widetilde{j}},\theta}(t)$ corresponds to the term $D_{M,\theta}(t)$  transported by $\widetilde{j}$ and evaluated at $t$.  We recall that for $\son$, the term $D_{\son}(t)$ is 
\[
det(Ad(t)-1)|_{\mathfrak{so}/\mathfrak{t}}
\]
Here, let $\mathfrak{t}_1$ be centralizer of $t$ in $\mathfrak{gl}_{2n}$, i.e.,
\[
\mathfrak{t}_1= \{ X \in \mathfrak{gl}_{2n}\mid (Ad(t)\circ \theta)X=X\}
\]
Thus, 
\[
D_{\gnn,\theta}(t)= det(Ad(t)\circ \theta -1)|_{\mathfrak{gl}_{2n}/\mathfrak{t}_1}
\]
Note that the discriminants $D_{\gnn,\theta}(t)$ and $D_{M,\theta}(t)$ are algebraically invariant under $\theta$-conjugation. On the other hand, the centralizer of $\mathfrak{t}'= Lie(\gnn^{\theta t})$ in $\mathfrak{gl}$ is a torus $\mathfrak{t}$ of dimension $2n$. In the description given before lemma \ref{art6lema}, this is the set $\{ (t,\, {}^ts^{-1}) \mid t,s \in S\}$.

The automorphism $Ad\circ \theta$ acts by $1$ on $\mathfrak{t}'$ and by $-1$ on a complementary space in $\mathfrak{t}$, thus
\begin{equation}\label{art6eqatn0}
D_{\gnn,\theta}(t)= 2^n det(Ad(t)\circ \theta -1)|_{\mathfrak{gl}_{2n}/\mathfrak{t}}:= 2^n D_{\gnn,\theta}^{1}(t)
\end{equation}
We obtain in an analogous way,
\begin{equation}\label{art6eqatn1}
D_{L,\theta}(t)= 2^n D_{L,\theta}^{1}(t)
\end{equation}

On the other hand, the action of $\theta$ on $M$ can be identified (through a variable change) with the automorphism
\[
(m_1,m_2)\mapsto (m_2, m_1)
\]
that is, the Galois automorphism in the split case $k\times k/k$ (\cite{A-C}, \S $I.5$). If $t=(x,1)$ and $t'= (x, ^tx^{-1})$ are associated, a straightforward calculation gives
\begin{equation}\label{art6eqatn2}
D_{L,\theta}^{1}(t)= D_{M}(t')
\end{equation}
Thus, if we rewrite the calculation of the twisted character, one obtains
\begin{equation}\label{6eqatn3}
\Theta_{\gnn,\theta}(t)= \sum_{\widetilde{j}}\Theta_{\widetilde{j}\delta_{2n}}(t)\frac{|D_{L^{\widetilde{j}},\theta}^{1}(t)|^{1/2}}{|D_{\gnn,\theta}^{1}(t)|^{1/2}}
\end{equation}
We recall that, for $t=(x,1)$ and $t'= (x, ^tx^{-1})$ associated, Kottwitz and Shelstad (\cite{K-S}, page 46) define the factor 
$\Delta_{IV}(t,t')$
\[
\Delta_{IV}(t,t')= \frac{|det(Ad(t)\circ \theta-1)|_{Lie(GL_{2n})/Lie(Cent(GL_{2n}^{\delta \theta}))}^{1/2}}{|det(Ad(t')-1)|_{Lie(\son)/Lie(T_{\son})}^{1/2}}= \frac{|D_{\gnn,\theta}^{1}(t)|^{1/2}}{|D_{\son}^{1}(t')|^{1/2}}
\]
Let $\alpha$ be the outer automorphism of $\son$ given by $g\mapsto wgw^{-1}$  ($w$ is a representative of $Out(\son)= O_{2n}/\son$) and $\pi'= \pi \circ \alpha$.

\begin{obs}\label{obsmatrixhecke}
The representation $\pi$ is not isomorphic to the representation $\pi'$. Indeed, the Hecke matrix for $\pi$ at the unramified places is the diagonal matrix
\[
t_{\pi,p}= diag(p^{\frac{n-1}{2}}t, p^{\frac{n-3}{2}}t, \dots p^{\frac{1}{2}}t, p^{-\frac{1}{2}}t^{-1}, \dots, p^{-\frac{n-1}{2}}t^{-1})
\]
On the other hand, the Hecke matrix of $\pi'$ is 
\[
t_{\pi^\prime,p}= D(p^{\frac{n-1}{2}}t, p^{\frac{n-3}{2}}t, \dots p^{-\frac{1}{2}}t^{-1}, p^{\frac{1}{2}}t^{}, \dots, p^{-\frac{n-1}{2}}t^{-1})
\]
The automorphism $\alpha$ is realized in $\widehat{SO}_{2n}$ by the matrix
\[
\begin{pmatrix} 1_{n-1} & & &\\ & & 1 &\\ & 1 & &\\ & & & 1_{n-1}\end{pmatrix}
\]
Since $|t| < p^{\frac{1}{2}}$, these two matrices are not conjugate under the Weyl group of $\son$.

\end{obs}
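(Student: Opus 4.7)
The plan is to distinguish $\pi$ and $\pi'$ at the level of Satake parameters at an unramified prime $p$. Two unramified irreducible admissible representations of $\son(\Q_p)$ are isomorphic if and only if their Hecke matrices represent the same $W(D_n)$-orbit in the dual torus $\widehat{T}(\C)\subset\widehat{SO}_{2n}(\C)$, so it suffices to show that the listed $t_{\pi,p}$ and $t_{\pi',p}$ are inequivalent at a single such $p$. The overall strategy is (i) to read off the action of the outer automorphism $\alpha$ on $\widehat{T}$ from the displayed representative, (ii) to exploit the self-dual structure of $t_{\pi,p}$ to pin down its $W(B_n)$-stabilizer, and (iii) to conclude that $\alpha$ represents the non-trivial coset of $W(B_n)/W(D_n)$ modulo this stabilizer.

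For (i), I parametrize $\widehat{T}$ by $n$-tuples $(t_1,\ldots,t_n)$ realized as $\operatorname{diag}(t_1,\ldots,t_n,t_n^{-1},\ldots,t_1^{-1})$ in the standard $2n$-dimensional representation. The displayed matrix swaps the $n$-th and $(n+1)$-th basis vectors, which carry the entries $t_n$ and $t_n^{-1}$, so conjugation by it induces $(t_1,\ldots,t_n)\mapsto(t_1,\ldots,t_{n-1},t_n^{-1})$. As a sign flip on a single coordinate, this is an odd signed permutation and therefore lies in $W(B_n)\setminus W(D_n)$.

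For (ii) and (iii), the explicit formula $t_i=p^{(n+1-2i)/2}t^{\epsilon_i}$ (with $\epsilon_i=+1$ for $i\le n/2$ and $\epsilon_i=-1$ otherwise) immediately yields the inversion pairing $t_{n+1-i}=t_i^{-1}$; in particular $t_n^{-1}=t_1$, so $\alpha\cdot(t_1,\ldots,t_n)=(t_1,\ldots,t_{n-1},t_1)$ repeats the entry $t_1$, while $(t_1,\ldots,t_n)$ itself has pairwise distinct coordinates. Under the Jacquet--Shalika bound $|t|<p^{1/2}$, and away from the finite set of algebraic coincidences $t^2=p^{\,i-j}$ for $i\le n/2<j$, the $W(B_n)$-isotropy of $(t_1,\ldots,t_n)$ is generated by the ``double-swap-and-invert'' elements attached to the inversion pairs $(i,n+1-i)$: each such generator is a product of two sign flips and therefore has even parity, hence lies in $W(D_n)$. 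Combined with step (i) this forces the $W(B_n)$-orbit of $(t_1,\ldots,t_n)$ to split into exactly two $W(D_n)$-orbits exchanged by $\alpha$, so that $t_{\pi',p}$ and $t_{\pi,p}$ represent distinct Satake classes; consequently $\pi_p\not\simeq\pi'_p$ and a fortiori $\pi\not\simeq\pi'$.

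The main obstacle is to justify carefully the regularity of $t_{\pi,p}$ used in the stabilizer computation: one has to rule out accidental collisions between the ``large'' entries $t_i=p^{(n+1-2i)/2}t$ and the ``small'' entries $t_j=p^{(n+1-2j)/2}t^{-1}$, which are precisely the equations $t^2=p^{\,i-j}$ cutting out finitely many algebraic loci inside the disk $|t|<p^{1/2}$. Away from these exceptions the Hecke matrix is in generic position and the orbit argument goes through; the role of the assumption $|t|<p^{1/2}$ is exactly to localize the comparison to this regular regime and to exclude the degenerate case $t=\pm 1$, which would collapse several inversion pairs and enlarge the isotropy beyond $W(D_n)$.
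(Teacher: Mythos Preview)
Your strategy matches the paper's: distinguish $\pi$ from $\pi'$ by showing that their Satake parameters lie in distinct $W(D_n)$-orbits, using that $\alpha$ acts on $\widehat{T}$ as a single sign change (an odd element of $W(B_n)$) and that the $W(B_n)$-stabilizer of $(t_1,\ldots,t_n)$ is contained in $W(D_n)$. The paper's own justification is the single line ``Since $|t|<p^{1/2}$, these two matrices are not conjugate under the Weyl group of $\son$'', so you are supplying exactly the details that the remark leaves implicit.

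There is, however, a genuine gap in your final step. You treat the coincidences $t^2=p^{\,i-j}$ (for $i\le n/2<j$) as a residual set of exceptions to be discarded by hand, and then assert that ``the role of the assumption $|t|<p^{1/2}$ is exactly to localize the comparison to this regular regime and to exclude the degenerate case $t=\pm 1$''. Neither claim is correct. The one-sided bound $|t|<p^{1/2}$ does \emph{not} exclude $t^2=p^{\,i-j}$ when $i-j<0$: for instance $t=p^{-1/2}$ with $n=4$ gives $(t_1,t_2,t_3,t_4)=(p,1,1,p^{-1})$, whose $W(B_4)$-stabilizer contains the single sign flip at position $2$ and hence meets $W(B_4)\setminus W(D_4)$. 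Conversely, $t=\pm 1$ is \emph{not} a bad case: then $t_i=p^{(n+1-2i)/2}$ are distinct half-integral powers of $p$, none equal to $\pm 1$, and your stabilizer argument goes through unchanged.

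What actually closes the argument is the \emph{two-sided} Jacquet--Shalika bound $p^{-1/2}<|t|<p^{1/2}$ (equivalently $|t^{\pm 1}|<p^{1/2}$, which is what the paper is invoking for a unitary cuspidal $GL_2$-representation). This gives $p^{-1}<|t|^2<p$, so $t^2\neq p^{k}$ for any nonzero integer $k$. Since every coincidence $t_i=t_j$ with $i\le n/2<j$ forces $t^2=p^{\,i-j}$ with $i-j\le -1$, and every coincidence $t_i=\pm 1$ forces $t^2=p^{\,2i-n-1}$ with $2i-n-1\le -1$, both are excluded. Hence the $t_i$ are pairwise distinct with $t_i\neq\pm 1$, the $W(B_n)$-stabilizer is exactly the group generated by your even ``swap-and-invert'' elements, and the $W(B_n)$-orbit splits into two $W(D_n)$-orbits as required. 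Once you replace the hand-waving about ``finitely many algebraic loci'' and ``$t=\pm 1$'' by this observation, the proof is complete.
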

We can now state the main theorem of this section.
\begin{teo}\label{arteo6chign1}
For associated $t\in \gnn$,  $\theta$-regular  and $t'\in \son$ we have the identity
\begin{equation}\label{6eqtc1}
\Theta_{\gnn, \theta}(t)= (\Delta_{IV}(t,t'))^{-1}\left(\Theta_{\son, \pi}(t')+\Theta_{\son, \pi^\prime}(t')  \right)
\end{equation}
where  $\pi'$ is the image of $\pi$ under $\alpha$, the outer automorphism of $SO_{2n}$.
\end{teo}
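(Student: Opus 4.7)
The plan is to start from the twisted van Dijk formula \eqref{6eqatn3} for $\Theta_{\gnn,\theta}^{\Pi}(t)$ and split the sum over embeddings $\widetilde{j}$ using Lemma \ref{setsj}. That lemma gives the disjoint decomposition $\{\widetilde{j}\} = \{j\} \coprod \{w\circ j\}$, where the $j$'s are precisely the $\son$-realized embeddings $A_M\to A_T$ and $w$ is a fixed representative of the non-trivial coset $O_{2n}/\son$. Substituting into \eqref{6eqatn3}, we obtain
\[
\Theta_{\gnn,\theta}^{\Pi}(t) = \frac{1}{|D_{\gnn,\theta}^{1}(t)|^{1/2}}\Bigl(\sum_{j}\Theta_{j\delta_{2n},\theta}(t)\,|D_{L^{j},\theta}^{1}(t)|^{1/2} + \sum_{j}\Theta_{(w\circ j)\delta_{2n},\theta}(t)\,|D_{L^{w\circ j},\theta}^{1}(t)|^{1/2}\Bigr).
\]
Using the identification \eqref{art6eqatn2}, namely $D_{L,\theta}^1(t) = D_{M}(t')$ for associated $(t,t')$, the discriminant factors on the right become exactly the ones that appear in the untwisted van Dijk formula \eqref{art6eq5} for $\son$.

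Next I would match each of the two sums to a character of $\son$. For the first sum, the character $\delta_{2n} = |\det(m_1)|^s$ on $L$ transported via an $\son$-realized $\widetilde{j} = j$ pulls back (via the canonical identification $A_L^\theta\simeq A_M$) to the character $\delta = |\det|^s$ of $M$ transported by $j$, so
\[
\sum_{j}\Theta_{j\delta_{2n},\theta}(t)\,\frac{|D_{M^{j}}(t')|^{1/2}}{|D_{\son}(t')|^{1/2}} \;=\; \Theta_{\son,\pi}(t').
\]
For the second sum, an embedding of the form $w\circ j$ is, by construction in Lemma \ref{art6lema}, the composition of an $\son$-realized embedding with the outer automorphism $\alpha$ represented by $w$. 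Consequently the transported inducing character is $\delta\circ\alpha$, so that the sum coincides with the $\son$-character formula applied to $\pi\circ\alpha = \pi'$:
\[
\sum_{j}\Theta_{(w\circ j)\delta_{2n},\theta}(t)\,\frac{|D_{M^{w\circ j}}(t')|^{1/2}}{|D_{\son}(t')|^{1/2}} \;=\; \Theta_{\son,\pi'}(t').
\]

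Finally, the overall prefactor collapses to the transfer factor. Factoring $|D_{\son}(t')|^{1/2}$ out of both sums yields a common normalization
\[
\frac{|D_{\son}(t')|^{1/2}}{|D_{\gnn,\theta}^{1}(t)|^{1/2}} = \Delta_{IV}(t,t')^{-1},
\]
by the definition of $\Delta_{IV}$ recalled just before the theorem (using that $D_{\son}^{1}(t') = D_{\son}(t')$ in our notation for the split torus). Combining the three displays gives the identity \eqref{6eqtc1}.

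The main obstacle will be the second bullet of the argument: one must verify carefully that composing $j$ with the representative $w$ of $\alpha$ really produces the transported inducing character for $\pi'$ rather than for some conjugate twist. This requires tracking the identifications $A_L^\theta \simeq A_M$ and $A_{T'} \simeq A_T$ along with the action of $w$ on the split torus described in the proof of Lemma \ref{art6lema}, and invoking Remark \ref{obsmatrixhecke} to confirm that $\pi'$ is genuinely not isomorphic to $\pi$, so that the two summands are independent contributions. The cancellation of the $2^n$ factors from \eqref{art6eqatn0}--\eqref{art6eqatn1} between numerator and denominator is routine and does not interfere.
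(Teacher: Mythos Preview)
Your proposal is correct and follows essentially the same line as the paper's own proof: split the sum over $\widetilde{j}$ via Lemma~\ref{setsj}, identify the two resulting blocks with the van Dijk expressions \eqref{art6eq5} for $\pi$ and $\pi'$ respectively, and use \eqref{art6eqatn2} together with the definition of $\Delta_{IV}$ to rewrite the discriminant ratio as $(\Delta_{IV}(t,t'))^{-1}\,|D_M(t')|^{1/2}/|D_{\son}(t')|^{1/2}$. The paper phrases the second matching as $\Theta_{(w\circ j)\delta_{2n}}(t)=\theta_{j\delta}(wt)$ rather than through $\delta\circ\alpha$, but this is the same identification you are making.
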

\begin{proof}
Lemma \ref{art6lema} implies that we need only to compare each term associated to the morphism $j$, in the equations \eqref{art6eq5} and \eqref{6eqatn3}, applied to the element $t'$. We have $\Theta_{j\,\delta_{2n}}(t)= \theta_{j\delta}(t)$ and $\Theta_{(w\circ j)\delta_{2n}}(t)= \theta_{j\delta}(wt)$, which appear in $\Theta_\pi$ and $\Theta_{\pi^\prime}$. On the other hand, the discriminants are algebraically invariant. Thus, we have to calculate them only for $j=1$. The equation \eqref{art6eqatn2} and the formula for the factor $\Delta_{IV}$ give
\[
\frac{|D^1_{L,\theta}(t)|^{1/2}}{|D^1_{\gnn,\theta}(t)|^{1/2}}= (\Delta_{IV}(t,t'))^{-1}\frac{|D_{M}(t')|^{1/2}}{|D_{\son}(t')|^{1/2}}
\]
Thus, we have an explicit form for the factor $\Delta_{IV}$.
\end{proof}

Next, we establish the result for associated functions. 
\begin{prop}\label{art6prop71}
Let $\phi$ and $f$ be associated functions on $\gnn$ and $\son$, respectively. Then we have
\[
\Theta^{\Pi}_{\gnn,\theta}(\phi)= \frac{1}{2}|2^n|_F\left(\Theta_{\son,\pi}(f)+\Theta_{\son,\pi^\prime}(f) \right)
\]
\end{prop}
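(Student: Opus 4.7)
The plan is to integrate the pointwise character identity of Theorem \ref{arteo6chign1} against the test functions $\phi$ and $f$, using the ordinary Weyl integration formula on $\son$ and its twisted counterpart on $\gnn$.

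First, I would apply the twisted Weyl integration formula on $\gnn$, which writes
\[
\Theta^{\Pi}_{\gnn,\theta}(\phi)=\sum_{T}\frac{1}{|W_\theta(T)|}\int_{T_\theta^{\mathrm{reg}}}\Theta^{\Pi}_{\gnn,\theta}(t)\,\Phi^\theta_\phi(t)\,|D_{\gnn,\theta}(t)|\,dt,
\]
where the sum runs over representatives of $\theta$-conjugacy classes of $\theta$-stable Cartan subgroups of $\gnn$, and $\Phi^\theta_\phi$ is the twisted orbital integral. The remark following \eqref{6eq1.1} restricts the support of $\Theta^{\Pi}_{\gnn,\theta}$ to $\theta$-tori coming from the Levi $L=GL_n\times GL_n$, so only these contribute.

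Second, I would plug Theorem \ref{arteo6chign1} into the integrand and replace $|D_{\gnn,\theta}(t)|$ by $|2^n|_F\,|D^1_{\gnn,\theta}(t)|$ via \eqref{art6eqatn0}. The integrand becomes
\[
|2^n|_F\,|D^1_{\gnn,\theta}(t)|\,\Delta_{IV}(t,t')^{-1}\bigl(\Theta_{\son,\pi}(t')+\Theta_{\son,\pi'}(t')\bigr)\,\Phi^\theta_\phi(t).
\]
By the hypothesis that $\phi$ and $f$ are associated (and that $\Delta_{IV}$ is the only non-trivial Kottwitz--Shelstad transfer factor in the split, principal-endoscopic setting at hand), one has $\Phi^\theta_\phi(t)=\Delta_{IV}(t,t')\,\Phi_f(t')$ for associated $t,t'$, which cancels the $\Delta_{IV}$ factor.

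Third, I would transfer the integration from $T_\theta$ to the $\son$-torus $S$ associated to $T$ by the norm map of Subsection \ref{artasctele}, using \eqref{art6eqatn2} to replace the remaining $|D^1_{L,\theta}(t)|$ by $|D_M(t')|$. Recognising the resulting two integrals as the ordinary Weyl integration formula on $\son$ applied respectively to $\Theta_{\son,\pi}\cdot f$ and $\Theta_{\son,\pi'}\cdot f$ yields the claimed identity up to a rational constant. The factor $\frac{1}{2}$ is forced by the comparison of stabilizers in Lemma \ref{setsj}: the set $\{\widetilde j\}$ is twice the set $\{j\}$ appearing in the $\son$-side character formula, and its two halves contribute respectively to $\Theta_{\son,\pi}$ and to $\Theta_{\son,\pi'}$, so each stable $\son$-orbit is counted twice in the sum $\Theta_{\son,\pi}(f)+\Theta_{\son,\pi'}(f)$ and must be divided by two.

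The main obstacle will be the bookkeeping of normalizations. One has to verify that the Haar measures implicit in the twisted orbital integrals on $\gnn$ and in the orbital integrals on $\son$ are precisely those dictated by the definition of associated functions, and that the factor $|2^n|_F$ coming from \eqref{art6eqatn0} is not silently cancelled once $|D^1_{L,\theta}(t)|$ is re-expressed via \eqref{art6eqatn1} and \eqref{art6eqatn2}. The delicate point is to confirm that Lemma \ref{setsj}, together with the non-isomorphism $\pi\not\simeq\pi'$ of Remark \ref{obsmatrixhecke}, yields exactly the coefficient $\frac{1}{2}$, rather than $1$ or $\tfrac{1}{4}$.
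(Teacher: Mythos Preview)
Your overall strategy---integrate the pointwise character identity of Theorem \ref{arteo6chign1} against the Weyl integration formula and use the matching of orbital integrals---is exactly what the paper does, except that the paper runs the computation in the opposite direction: it starts from $(\Theta_{\son,\pi}+\Theta_{\son,\pi'},f)$ on the $\son$ side, inserts Theorem \ref{arteo6chign1}, replaces $O_\gamma(f)$ by $\Delta_{IV}\cdot TO_\delta(\phi)$, and then recognises the twisted Weyl integral on $\gnn$.

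There is, however, a genuine mistake in your accounting for the factor $\tfrac12$. You attribute it to Lemma \ref{setsj}, i.e.\ to the fact that $\{\widetilde j\}=\{j\}\sqcup\{w\circ j\}$. But that doubling of embeddings is \emph{already} fully absorbed in the statement of Theorem \ref{arteo6chign1}: it is precisely what produces the two terms $\Theta_{\son,\pi}(t')+\Theta_{\son,\pi'}(t')$ on the right-hand side of the pointwise identity. Once you invoke Theorem \ref{arteo6chign1}, Lemma \ref{setsj} has been used up and cannot be invoked a second time to explain the $\tfrac12$. The actual source of the $\tfrac12$ is different: the norm map $\cala$ is $2$-to-$1$ on $\theta$-regular elements, so a single $\son$-conjugacy class in $U_G$ corresponds to exactly two $\theta$-conjugacy classes in $V_{\gnn}$. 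In the paper's direction this yields a factor $2$ when passing from $\sum_{T_G}\int_{U_G}$ to $\sum_T\int_{V_{\gnn}}$; in your direction it would appear as a collapse $2\to 1$ when pushing the integration forward along $\cala$. Relatedly, the relation you write, $\Phi^\theta_\phi(t)=\Delta_{IV}(t,t')\Phi_f(t')$, inverts the definition of associated functions given in \S\ref{artasctele}: it is $SO_{t'}(f)$ that is expressed as a \emph{sum} over the $\theta$-classes $t\to t'$, and that sum has two terms here. If you fix these two points---the origin of the $\tfrac12$ and the direction of the matching relation---your argument goes through and is equivalent to the paper's.
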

\begin{proof}
Let ${T}$ be a family of $\theta$-conjugacy classes of $\theta$-tori in $\gnn$ and let $T_G$ their image under the norm $\cala$.  We choose in each $T_G$, a set of regular elements $U_G$, such that $\coprod U_{\GO}$ is a set of representatives of the regular support of $\pi$ and $\pi'$, modulo $\son$-conjugation. Thus, for $\gamma$ and $\delta$ associated, $\delta= (x,1)$, $\gamma = (x, ^tx^{-1})$, we have by Weyl integration:
\[
(\Theta_\pi + \Theta_{\pi^\prime},f)=\sum_{T_{G}}\int_{U_{G}}|D_{\son}(\gamma)|(\Theta_\pi+\Theta_{\pi'})(\gamma)O_\gamma(f) d\gamma
\]
\[
=\sum_{T_{G}}\int_{U_{G}}|D_{\son}(\gamma)|\Delta_{IV}(\gamma,\delta)\Theta_{\Pi,\theta}(\delta)O_\gamma(f)d\gamma 
\]
\[
=\sum_{T_{G}}\int_{U_{G}}|D_{\son}(\gamma)|\Delta_{IV}(\gamma,\delta)^2\Theta_{\Pi,\theta}(\delta)TO_\delta(\phi) d\delta
\]
\[
=\sum_{T_{G}}\int_{U_{G}}|D^1_{\gnn,\theta}(\delta)|\Theta_{\Pi,\theta}(\delta)TO_\delta(\phi) d\delta
\]
\[
=|2^{-n}|_F\sum_{T_{G}}\int_{U_{G}}|D_{\gnn,\theta}(\delta)|\Theta_{\Pi,\theta}(\delta)TO_\delta(\phi) d\delta
\]
On the considered elements, conjugacy and stable conjugacy are the same, but the map norm   $\cala$ identifies two elements exactly on the set of $\theta$-regular elements. Thus, the last integral is
\[
2|2^{-n}|_F\sum_{T}\int_{V_{\gnn}}|D_{\son,\theta}(\delta)|\Theta_{\Pi,\theta}(\delta)TO_\delta(\phi) d\delta
\]
where $\{T\}$ and $\{V_{\gnn}\subset T\}$ form a set of representatives for the $\theta$-conjugation. Finally, the last integral is equal to
\[
\int_{\gnn}\Theta_{\Pi,\theta}(\delta)\phi(\delta)d\delta= \Theta_{\pi,\delta}(\phi).
\]
\end{proof}

\subsection{Consequences for unramified functions}
Let $\phi$ be a function  in the Hecke algebra $\calh_{2n}:= \calh(\gnn, \gnn(O_k))$ (where $O_k$ is the  maximal compact subgroup of $k$) and such that $f:= \lambda \phi$ is the image by the natural homomorphism
\[
\calh_{2n}\to \calh_G= \calh(\son, \son(O_F))
\]
given by the Satake transform $\widehat{\phi} \mapsto \widehat{f}$, where the element $\widehat{\phi}(t_1, \dots, t_{2n})$ belongs to the polynomial algebra $\C[t_1, t_1^{-1}, \dots, t_{2n}, t^{-1}_{2n}]^{\mathfrak{S}_{2n}}$ and  $\mathfrak{S}_{2n}$ is the symmetric group in $2n$ variables. The function  $\widehat{f}(t_1,\dots, t_n, t^{-1}_n, \dots, t^{-1}_1)\in \C[t_1, \dots, t_n]^W$, (where $W$ is the Weyl group of $\son$) is obtained by restriction of $\widehat{\phi}$. Suppose that we have measures $dg_{2n}$, $dg$ on the groups $\gnn(k)$ and $\son(k)$, respectively, such that
\[
1= \int_{\gnn(O_k)} dg_{2n}= \int_{\son(O_k)} dg
\]
 Let $t_\Pi$, $t_\pi$, $t_{\pi^\prime}$ the Hecke matrices of $\Pi$, $\pi$ and $\pi'$, respectively at the unramified places in the maximal tori $\widehat{T}_{2n}$ and  $\widehat{T}_G$. Thus, for every $\phi \in \calh_{2n}$
\[
trace\,\left(\Pi(\phi)A_\theta\right)= trace\,\Pi(\phi)= \widehat{\phi}(t_\Pi)
\]
Suppose that $\phi$ and $f=\lambda \phi$ are associated (in the sense of orbital integrals). Thus for $\phi \in \calh_{2n}$:
\[
trace\,(\pi(\lambda \phi))= \widehat{(\lambda \phi)}(t_\pi)= \widehat{\phi}(t_\pi)
\]
and also
\[
trace\,(\pi'\,(\lambda \phi))= \widehat{(\lambda \phi)}(t_{\pi^\prime})= \widehat{\phi}(t_{\pi^\prime})
\]
Since $t_\pi$ and $t_{\pi^\prime}$ are conjugate in $\gnn(\C)$ we find that
\begin{equation}\label{artegalitetraces}
trace\,\left(\Pi(\phi)A_\theta\right)= trace\,\pi(f)= trace\,\pi'(f)
\end{equation}
At least, if $p\neq 2,\infty$.

\begin{obs}\label{artobs6}
The factor $|2^n|_F$ has no importance. Indeed, Arthur's arguments involve the adelic functions $\phi= \otimes_v \phi_v$, $f= \otimes_v f_v$. This factor occurs for $\R$ and $\Q_2$, the product of these two factors being equal to $2$. For Arthur's global calculations, we have an identity without these factors.  This suggests, at least in our case, a slight correction in Kottwitz-Shelstad's definition for the transfer factors.
\end{obs}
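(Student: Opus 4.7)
The plan is to promote the local identity of Proposition \ref{art6prop71} to a global identity on Arthur's adelic test functions and show that the correction $|2^n|_{F_v}$ disappears by the product formula. In the proof of Proposition \ref{art6prop71} the factor $|2^n|_{F_v}$ enters through the discriminant comparison $D_{\gnn,\theta}(t)=2^n D^1_{\gnn,\theta}(t)$ of \eqref{art6eqatn0}, which is a purely local computation in the Lie algebra; the combinatorial $\tfrac12$ records the two-to-one correspondence $\{\widetilde j\}=\{j\}\sqcup\{w\circ j\}$ of Lemma \ref{setsj}, which is of global origin and comes from the pair $\pi,\pi'=\pi\circ\alpha$. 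These two sources must therefore be handled differently in the adelic passage.

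Next, I would specialize to adelic test functions $\phi=\otimes_v\phi_v$ on $\gnn(\A)$ and matching $f=\otimes_v f_v$ on $\son(\A)$. At every non-archimedean place $v$ of odd residue characteristic one has $|2|_{F_v}=1$ and hence $|2^n|_{F_v}=1$, so the local identity already contains no correction there. The only places where $|2^n|_{F_v}\neq 1$ are $v=\R$, with $|2^n|_{\R}=2^n$, and $v=\Q_{2}$, with $|2^n|_{\Q_{2}}=2^{-n}$. Applying the product formula $\prod_v |x|_{F_v}=1$ on $\Q^{\times}$ to $x=2^n$ gives $|2^n|_{\R}\cdot|2^n|_{\Q_{2}}=1$, so that when the local identities of Proposition \ref{art6prop71} are assembled into the global identity through Arthur's trace formula, the correction $|2^n|_{F_v}$ multiplies out to~$1$.

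One thus obtains a global identity between $\Theta_{\gnn,\theta}^{\Pi}(\phi)$ and $\Theta_{\son,\pi}(f)+\Theta_{\son,\pi'}(f)$ free of the factor $|2^n|$, as claimed. The main obstacle, and the true content of the remark, is the tension with Kottwitz--Shelstad's place-by-place definition of $\Delta_{IV}$: that definition does not contain $|2^n|_{F_v}^{1/2}$ at the dyadic and archimedean places, so the local identity of Proposition \ref{art6prop71} does not match the clean form $\Theta_{\gnn,\theta}^{\Pi}(\phi_v)=\tfrac12\bigl(\Theta_{\son,\pi_v}(f_v)+\Theta_{\son,\pi'_v}(f_v)\bigr)$ one would want uniformly at every place. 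The natural resolution is to insert $|2^n|_{F_v}^{1/2}$ into $\Delta_{IV}$ at $v=\R$ and $v=\Q_{2}$, or equivalently to renormalize the corresponding local Haar measures used in the orbital and twisted orbital integrals. Such a modification is invisible to the global identity by the product formula, yet restores local compatibility of the endoscopic transfer, which is exactly the slight correction to Kottwitz--Shelstad suggested by the remark.
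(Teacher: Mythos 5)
Your argument follows the same line as the remark itself: localize $|2^n|_{F_v}$ to the places $v=\R$ and $v=\Q_2$ and invoke the product formula on $\Q^\times$ to conclude that the factor contributes trivially to any global identity built from Arthur's adelic test functions. Your computation $|2^n|_\R\cdot|2^n|_{\Q_2}=2^n\cdot 2^{-n}=1$ is correct; note that the remark as printed states this product equals $2$, which is evidently a slip---the value $1$ is exactly what is required for the factor to be invisible in Arthur's global calculation, as both the remark and your proof intend. Two smaller points. First, your attribution of the combinatorial $\tfrac12$ to Lemma \ref{setsj} is slightly off: in the proof of Proposition \ref{art6prop71} that factor arises because the norm map $\cala$ identifies pairs of $\theta$-regular elements (the two-to-one step displayed at the end of the proof), a purely local phenomenon, so it is not best described as being ``of global origin'' or as coming from $\{\widetilde j\}=\{j\}\sqcup\{w\circ j\}$; this does not affect your main argument but misstates the provenance of the constant. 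Second, your proposed remedy---inserting $|2^n|_{F_v}^{1/2}$ into $\Delta_{IV}$ at $v=\R,\Q_2$, or equivalently renormalizing the local Haar measures---is indeed the correction the remark has in mind; equivalently one could define $\Delta_{IV}$ using $D_{\gnn,\theta}$ in place of $D^1_{\gnn,\theta}$, cf.\ \eqref{art6eqatn0}, which absorbs the constant uniformly at every place, remains invisible globally by the product formula, and restores a clean place-by-place trace identity.
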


\begin{obs}\label{artobs6.5}
This theorem remains true if we replace $|det|^s$ by a character $\chi(det)$, where $\chi$ is an abelian character.
\end{obs}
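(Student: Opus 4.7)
The plan is to verify that each step in the proofs of Theorem \ref{arteo6chign1} and Proposition \ref{art6prop71} is insensitive to the precise shape of the inducing character on the Levi $M=GL_n \subset \son$, provided one selects the correct $\theta$-invariant lift to the twisted side.

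First I would identify the right data on $\gnn$. Replacing $|\det|^s$ by $\chi\circ\det$ on $M$, the natural analogue on $L=GL_n\times GL_n$ of $|\det(m_1)|^s|\det(m_2)|^{-s}$ is the one-dimensional character
\[
\delta_{2n}^{\chi}(m_1,m_2) := \chi(\det m_1)\,\chi(\det m_2)^{-1}.
\]
Using $\theta(m_1,m_2) = ({}^t m_2^{-1}, {}^t m_1^{-1})$, one checks directly that $\delta_{2n}^{\chi}\circ\theta = \delta_{2n}^{\chi}$, so the prescription $A_\theta=1$ on the one-dimensional inducing space continues to intertwine the induced representation with its $\theta$-twist, exactly as in the case $\chi=|\,\cdot\,|^s$.

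Next I would replay van Dijk's formula. The identities for the characters of parabolically induced representations in both the untwisted and twisted settings express $\Theta_{\son,\pi}(t')$ and $\Theta_{\gnn,\theta}(t)$ as sums, indexed respectively by embeddings $j$ and $\widetilde{j}$, of the inducing character evaluated along the embedding, times a discriminant ratio. The discriminants $D_{\son}$, $D_M$, $D^1_{\gnn,\theta}$ and $D^1_{L,\theta}$ are intrinsic to the groups and tori and are independent of the inducing character, so the key relation $D^1_{L,\theta}(t) = D_M(t')$ and the computation of $\Delta_{IV}$ are unaffected. Similarly, Lemmas \ref{art6lema} and \ref{setsj} describing $\{\widetilde{j}\} = \{j\} \coprod \{w \circ j\}$ rest only on the geometry of the tori and the action of $\theta$, not on the inducing data. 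Substituting $\theta_{j(\chi\circ\det)}$ for $\theta_{j\delta}$ and $\Theta_{\widetilde{j}\delta_{2n}^{\chi},\theta}$ for $\Theta_{\widetilde{j}\delta_{2n},\theta}$, the proof of Theorem \ref{arteo6chign1} goes through verbatim, yielding the same identity with $\pi$ and $\pi'=\pi\circ\alpha$ now induced from $\chi\circ\det$.

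Finally, Proposition \ref{art6prop71} will be obtained from the resulting character identity by the very same Weyl-integration computation used in the text, together with the defining property of associated functions (matching of orbital and twisted orbital integrals); neither ingredient depends on the inducing character. The only delicate point is the first step, namely ensuring the $\theta$-invariance of $\delta_{2n}^{\chi}$ and the consistency of taking $A_\theta=1$; once that is secured, the remaining arguments reduce to routine substitutions in the same chain of formulas, and no new local subtlety arises even when $\chi$ is ramified, because the inducing space remains one-dimensional and $\theta$-fixed.
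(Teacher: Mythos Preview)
Your proposal is correct. The paper states this as a bare remark without proof, leaving the verification implicit; your write-up supplies exactly the details one would expect, correctly isolating the one nontrivial point (the $\theta$-invariance of the lifted character $\delta_{2n}^{\chi}$ on $L$, so that $A_\theta=1$ still intertwines) and observing that the discriminant identities, Lemmas~\ref{art6lema}--\ref{setsj}, and the Weyl-integration argument of Proposition~\ref{art6prop71} are all independent of the inducing character.
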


\subsection{Associated elements}\label{artasctele}
Let $\phi \in C_c^\infty(\gnn,\C)$ and $f\in C_c^\infty(\son,\C)$. Fix Haar measures $dg_{2n}$ and $dg$ on the groups $\gnn$ and $\son$ respectively. We say that $\phi$ and $f$ are \emph{associated} (or that they have \emph{matching orbital integrals}) if 
\[
SO_{t^\prime}(f)= \sum_{t\to t'}\Delta(t,t')O_{\theta t}(\phi)
\]
This means that  the stable orbital integral of $f$ is equal to the sum of  the twisted orbital integrals of $\phi$ times the transfer factor $\Delta(t,t')$ defined in chapter $4$ of \cite{K-S}. We suppose also compatibility between the measures and the twisted stabilizers (see \cite{K-S}, page $71$). The next results in this section shall show that conjugacy is equivalent to stable conjugacy.  In the next section, we shall show that the term $\Delta(t,t')$ has a simple expression in our case, more precisely, we shall prove that $\Delta(t,t')= \Delta_{IV}(t,t')$.

We shall use the notation of Kottwitz and Shelstad (\cite{K-S}, chapter 3) for describing the correspondence (modulo conjugacy) between elements  of $\son$ and elements of $\gnn$ (modulo $\theta$-conjugacy). As stated before, we need only the norm for $\theta$-regular and semi-simple elements, therefore we give the complete description only in this case.

It is sometimes more useful to work with a slightly different outer automorphism. In this section we denote by $\theta_0: \gnn \to \gnn$ the map given by $g \mapsto J_0\, ^t g^{-1}J_0$, where the matrix $J_0$ is
\[
J_0= \begin{pmatrix}   & & & &-1\\ & & & 1 & \\ & & \Ddots & & \\ & -1 & & & \\ 1 & & & &   \end{pmatrix}= \begin{pmatrix}
 & K_0\\ K_0 & \end{pmatrix}
\]
This outer automorphism preserves the Whittaker model (see \cite{C-C}).

We shall describe the conjugacy classes in $\son(\overline{k})$ and the $\theta_0$-conjugacy classes  in $\gnn(\overline{k})$. In the case of $\theta_0$-conjugacy we denote, as Kottwitz-Shelstad  do,  by $Cl_{ss}(\gnn,\theta_0)$, the set of classes of $\theta_0$-conjugacy of the group $\gnn(k)$. According to Kottwitz-Shelstad (\cite{K-S}) page $26$,  we have an explicit bijection
\begin{equation}\label{art6eq13}
Cl_{ss}(\gnn,\theta_0)\to T_{\theta_0}^{2n}/ \Omega^{\theta_0}
\end{equation} 
Here $T^{2n}\simeq \G_m^{2n}$ is the diagonal torus and $T_{\theta_0}= T/(1-\theta_0)T$. The set $\Omega^{\theta_0}$ is the centralizer of $\theta_0$ in the Weyl group  $W_{\gnn}= \mathfrak{S}_{2n}$. Thus, $\Omega^{\theta_0}= \mathfrak{S}_{n}\rtimes (\Z/2\Z)^n$ acting on $\G_m^n$ as the extended Weyl group of type $\cald_n$. If $x= (x_1, \dots, x_{2n})$ is a general element of $T$ we have
\[
\theta_0(x)= (x^{-1}_{2n}, \dots, x^{-1}_1)
\]
and 
\[
1-\theta_0(x)= (x_1 x_{2n}, \dots, x_{2n} x_1)
\]
Hence, the image is
\[
\{(u_1, \dots, u_n, u_n, \dots, u_1\}\quad \text{and}\quad T_{\theta_0}= T/Im(1-\theta_0) \simeq \G_m^n
\]
under the map $(x_1, \dots, x_{2n}) \mapsto (x_1/ x_{2n}, \dots, x_n/ x_{n+1})$.  
On the other hand, we have
\[
Cl_{ss}(\son)= \G_m^n/W_{\son},
\]
where $W_{\son}= \mathfrak{S}_n\rtimes (\Z/2\Z)^n$ is the Weyl group of $\son$.
We define as Arthur does $\widehat{SO}_{2n}$  ($\widehat{SO}_{2n} \subset \widehat{GL}_{2n}= GL_{2n}(\C)$) as the centralizer of the element $s = \left( \begin{smallmatrix} -1_n & \\ & 1_n \end{smallmatrix} \right) \in \widehat{SO}_{2n}$. Thus, the matrix defining the group $\widehat{SO}_{2n}$ is
\[
J'= \begin{pmatrix} & -K_0 \\ K_0 & \end{pmatrix}
\]
The torus $T= T_{2n}$ is diagonal, as $\widehat{T}$, so we have
\[
\widehat{T}_{\theta_0} = (\widehat{T})^{\widehat{\theta}_0} = \{ diag(t_1, \dots, t_n, t^{-1}_n, \dots, t^{-1}_1) \} = \widehat{T}_{\son}
\]
We have the isomorphism $(\widehat{T})^{\theta_0} \simeq \widehat{T}_{\son}$ and thus, we have a canonical isomorphism
\[
T_{\son} \simeq T_{\theta_0}
\]
We use now the following proposition.
\begin{prop}[K-S, theorem $3.3.4$]\label{6prp1}
There exists a canonical map 
\[
\cala= \cala_{\son/ \gnn}:Cl_{ss} ( \son) \to Cl_{ss}(\gnn, \theta_0)
\]
This action is defined over $k$, i.e., equivalently  it commutes with the action of the Galois group $Gal(\overline{k}/k)$.
\end{prop}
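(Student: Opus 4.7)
The plan is to construct $\cala$ by stitching together the two quotient descriptions of $Cl_{ss}(\son)$ and $Cl_{ss}(\gnn,\theta_0)$ that have just been recorded. First I would combine the bijection \eqref{art6eq13} with the explicit isomorphism $T_{\theta_0}\simeq \G_m^n$ given by $(x_1,\dots,x_{2n})\mapsto (x_1/x_{2n},\dots,x_n/x_{n+1})$, so that $Cl_{ss}(\gnn,\theta_0)\simeq \G_m^n/\Omega^{\theta_0}$. On the $\son$ side we already have $Cl_{ss}(\son)=\G_m^n/W_{\son}$. The map $\cala$ is then defined to be the map of quotients induced by the identity on $\G_m^n$, so that construction reduces entirely to showing that both groups act through the \emph{same} transformations on $\G_m^n$.

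For this matching of actions, I would check, generator by generator, that a permutation $\sigma\in\mathfrak{S}_n\subset\Omega^{\theta_0}\subset W_{\gnn}=\mathfrak{S}_{2n}$ acts on $T$ by simultaneously permuting the pairs $(i,2n+1-i)$, which, modulo $(1-\theta_0)T$, descends to the diagonal $\sigma$ on $\G_m^n$; and that the sign generators in $(\Z/2\Z)^n$, coming from transposing $x_i$ and $x_{2n+1-i}$, induce on $T_{\theta_0}$ precisely the inversions $x_i\mapsto x_i^{-1}$. Comparing with the action of $W_{\son}$ on $T_{\son}\simeq \widehat{T}^{\widehat{\theta}_0}$ via the canonical isomorphism $T_{\son}\simeq T_{\theta_0}$ already recorded, the two extended Weyl group actions on $\G_m^n$ coincide, so the identity of $\G_m^n$ does descend and $\cala$ is well defined.

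For the second assertion, that $\cala$ is defined over $k$, I would observe that every ingredient of the construction -- the inclusion $T\hookrightarrow\gnn$, the quotient $T\to T_{\theta_0}$, the dual-torus identification $T_{\theta_0}\simeq \widehat{T}_{\son}$, and the projections onto the Weyl-group quotients -- is an algebraic morphism defined over $k$, because the tori are split and both $\theta_0$ and the Weyl group representatives are rational. Galois equivariance of $\cala$ therefore follows formally. The main obstacle I expect is the step of the previous paragraph: although both quotienting groups are abstractly $\mathfrak{S}_n\rtimes(\Z/2\Z)^n$, one must pin down the precise action of $\Omega^{\theta_0}\subset W_{\gnn}=\mathfrak{S}_{2n}$ on $T_{\theta_0}$ and check that it agrees coordinate-by-coordinate with the action of $W_{\son}$ on $T_{\son}$. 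This bookkeeping is the technical core of Kottwitz--Shelstad \cite{K-S}, \S 3.3; once it is settled, both the existence of $\cala$ and its Galois-equivariance follow at once.
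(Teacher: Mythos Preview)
The paper does not give its own proof of this proposition: it is quoted verbatim as Kottwitz--Shelstad, theorem $3.3.4$, and the paper only records the explicit form of $\cala$ on the diagonal tori immediately afterwards. So there is nothing to compare your argument against on the paper's side.

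Your sketch is a reasonable specialization of the Kottwitz--Shelstad construction, but one step is not quite right. You write that the construction ``reduces entirely to showing that both groups act through the \emph{same} transformations on $\G_m^n$''. They do not: the Weyl group $W_{\son}$ of type $\cald_n$ consists of signed permutations with an \emph{even} number of sign changes, hence is an index-$2$ subgroup of $\Omega^{\theta_0}=\mathfrak{S}_n\rtimes(\Z/2\Z)^n$ (the paper calls the latter the ``extended Weyl group of type $\cald_n$''; its statement $W_{\son}=\mathfrak{S}_n\rtimes(\Z/2\Z)^n$ just before the proposition is a slip). What you actually need, and what suffices, is only the inclusion $W_{\son}\subset\Omega^{\theta_0}$ as subgroups of the automorphisms of $\G_m^n$; then the identity on $\G_m^n$ descends to a well-defined map $\G_m^n/W_{\son}\to\G_m^n/\Omega^{\theta_0}$, generically $2$-to-$1$. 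This is consistent with the paper's later use of the outer automorphism $\alpha$ and the pair $\pi,\pi'$, which are precisely the two preimages collapsed by $\cala$. With that correction your outline goes through, and your remark that the rationality is immediate because all tori are split and all morphisms are $k$-rational is correct.
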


The inverse mapping $T_{\theta_0} \to T_{\son}$ has a simple form, namely $(x_1,\dots, x_{2n}) \to (x_1 x_{2n}^{-1},\dots, x_{2n}x^{-1}_1)$. In particular, it  is a bijection
\[
T \ni (x_1, \dots, x_n, 1, \dots, 1)  \mapsto (x_1, \dots, x_n, x^{-1}_n, \dots, x^{-1}_1) \in T_{\son}.
\]
The inverse map is called the \emph{norm} $\cala$. We have,
\[
\cala: \G^n_m /W_{\son} \to \G^n_m/\Omega^{\theta_0}
\]
where $W_{\son}$ is the Weyl group and $\Omega^{\theta_0}$ is the extended Weyl group.

We recall that $\son$ is defined by the matrix $J'$ (which indeed defines the split group of type $\cald_n$).  For $g\in GL_n$ let $g'$ be defined by $g'= K_0\, ^tg^{-1} K^{-1}_0$. We consider now the following subgroups,
\begin{enumerate}
\item $\son \supset M = \{ g, g'\}$, a Levi subgroup of $\son$, $g \in GL_n$.
\item $\gnn \supset L = GL_n\times GL_n$ (embedded diagonally).
\end{enumerate}

We have now the following lemma.
\begin{lema}\label{rt6lem1}\hfill \\
\begin{enumerate}
\item Let  $T$ be a Cartan subgroup of  $\son$ contained in  $L$. Then $H^1(k, T)= \{1\}$.
\item In particular, if  $\gamma \in T$ is a regular element, every element that is stably conjugate to $\gamma$ (in $\son$) is conjugate to  $\gamma$.
\end{enumerate}
\end{lema}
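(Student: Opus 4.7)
The plan is to identify $T$ explicitly as a product of Weil restrictions, apply Hilbert 90 via Shapiro's lemma to obtain (1), and then deduce (2) from the standard cohomological parametrization of the stable conjugacy class of a regular element.

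First, I would pin down the structure of $T$. Any element of $\son\cap L$ is block-diagonal $(g_1,g_2)$ with the constraint $g_1^{T}K_0 g_2 = K_0$ coming from $g^{T}J'g=J'$, which forces $g_2=g_1'$; hence $\son\cap L = M$ and $T\subset M$. Under the isomorphism $M\xrightarrow{\sim}GL_n$, $(g,g')\mapsto g$, the Cartan $T$ corresponds to a maximal $k$-torus $S$ of $GL_n$. Such tori are classified, up to conjugacy, by \'etale $k$-algebras $E$ of degree $n$; writing $E = K_1\times\cdots\times K_r$ with field factors $K_i/k$, we obtain
\[
T \;\simeq\; \operatorname{Res}_{E/k}\G_m \;\simeq\; \prod_{i=1}^{r}\operatorname{Res}_{K_i/k}\G_m.
\]

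Part (1) then reduces to Hilbert 90. Indeed, Shapiro's lemma gives, for each $i$,
\[
H^1(k,\operatorname{Res}_{K_i/k}\G_m) \;\simeq\; H^1(K_i,\G_m) \;=\; \{1\},
\]
and taking the product over $i$ yields $H^1(k,T)=\{1\}$.

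For part (2), I would invoke the standard cocycle argument. Let $\gamma\in T(k)$ be regular, so that $Z_{\son}(\gamma)=T$. Any stable conjugate $\gamma'=g\gamma g^{-1}$, with $g\in \son(\overline{k})$, yields a cocycle $\sigma\mapsto g^{-1}\sigma(g)\in Z^1(k,T)$, whose class in $H^1(k,T)$ depends only on the $\son(k)$-conjugacy class of $\gamma'$. This produces the familiar injection
\[
\{\son(k)\text{-conjugacy classes in the stable class of }\gamma\} \;\hookrightarrow\; \ker\bigl(H^1(k,T)\to H^1(k,\son)\bigr),
\]
the target of which vanishes by part (1). Hence $\gamma'$ is already $\son(k)$-conjugate to $\gamma$.

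No step is genuinely delicate; the only point requiring verification is the identification $\son\cap L = M$, equivalently the fact that every Cartan of $\son$ inside $L$ arises from a maximal $k$-torus of the Levi $M\simeq GL_n$. Once this is in place, (1) is immediate from Hilbert 90 via Shapiro's lemma, and (2) is a formal consequence of (1).
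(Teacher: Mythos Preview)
Your proof is correct and follows essentially the same approach as the paper's. The paper's proof is a two-sentence sketch: ``Part 1 is Hilbert's theorem 90 ($T$ is the product of tori $\operatorname{Res}_{K/k}\G_m$, where $K$ is an extension of $k$). Part 2 follows from 1 since the conjugacy classes in a stable regular conjugacy class are parametrized by $\ker(H^1(k,T)\to H^1(k,\son))$.'' You have supplied the details the paper omits---the identification $\son\cap L = M$, the explicit description of maximal tori of $GL_n$ as Weil restrictions, and the invocation of Shapiro's lemma---but the underlying argument is identical.
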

\begin{proof}
Part $1$ is Hilbert's theorem $90$ ($T$ is the product of tori $Res_{K/k} \G_m$, where $K$ is an extension of $k$). Part $2$ follows from $1$ since the conjugacy classes in a stable regular conjugacy class are parametrized by
\[
Ker(H^1(k,T)\to H^1(k, \son))
\]
\end{proof}

Next, we recall a calculation made in the article of Chenevier-Clozel (\cite{C-C}). In $L$, the automorphism $\theta_0$ is given by
\[
(g_1, g_2)  \mapsto (g_2', g_1').
\]
The map $I$ given by $(g_1, g_2) \mapsto (g_1, g'_2)$ conjugates $\theta_0$ with the map $\sigma$ defined by
\[
(g_1, g_2) \mapsto (g_2, g_1).
\]
This means that the Galois conjugacy in the trivial case of one split extension $k\times k / k$, a case already known by the work of Clozel and Arthur in base change (see \cite{A-C}). Thus, we deduce the following lemma. 

\begin{lema}\label{art6leme3}\hfill \\
\begin{enumerate}
\item If $\delta \in L(k)$, then $\delta$ is strongly $\theta_0$-regular (i.e. its twisted centralizer is a torus) if and only if $\delta \delta^\theta_0 \in \gnn$ is regular.
\item Every element strongly $\theta_0$-regular in $\gnn$ is $\theta_0$-conjugate to an element of the form
\[
\delta= (g_1, 1),
\]
$g_1 \in GL_n$, regular.
\item For  $\delta$ strongly $\theta_0$-regular in  $L$, $\theta_0$-conjugacy and stable $\theta_0$-conjugacy coincide.
\end{enumerate}
\end{lema}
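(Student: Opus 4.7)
My plan is to exploit the $I$-trick recalled just before the lemma, which intertwines $\theta_0|_L$ with the coordinate swap $\sigma$, combined with direct twisted-centralizer computations. For \textbf{(1)}, the defining identity $x\delta=\delta\theta_0(x)$ of the twisted centralizer, together with $\theta_0^2=\mathrm{id}$, immediately gives $x\cdot N(\delta)=N(\delta)\cdot x$, where $N(\delta):=\delta\theta_0(\delta)$. The twisted centralizer of $\delta$ therefore sits inside $Z_{\gnn}(N(\delta))$ as the fixed points of the involution $\mathrm{Ad}(\delta)\circ\theta_0$. When $N(\delta)$ is regular, $Z_{\gnn}(N(\delta))$ is a maximal torus and its fixed-point scheme is a subtorus of the expected dimension, so $\delta$ is strongly $\theta_0$-regular. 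Conversely, if $N(\delta)$ fails to be regular, $Z_{\gnn}(N(\delta))$ contains a non-abelian connected reductive subgroup and its $\theta_0$-fixed points cannot be a torus of maximal rank, contradicting strong regularity.

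For \textbf{(2)}, I first treat the problem inside $L$. The isomorphism $I:(g_1,g_2)\mapsto(g_1,g_2')$ intertwines $\theta_0|_L$ with $\sigma$, and a direct $\sigma$-conjugation computation shows that $(a,b)$ is $\sigma$-conjugate in $L$ to $(ab,1)$ via the element $(1,b^{-1})$; translating back, every element of $L$ is $\theta_0$-conjugate inside $L$ to some $(g,1)$. To handle an arbitrary strongly $\theta_0$-regular $\delta\in\gnn(k)$, I reduce to this situation by showing that the $\theta_0$-class of $\delta$ meets $L$. For this I use Proposition~\ref{6prp1}: $\cala(\delta)$ is a regular semisimple stable class in $\son$, which admits a diagonal representative in $T_{\son}$; unwinding the explicit description of $\cala$ recalled just above produces a representative of the $\theta_0$-class of the block-diagonal form $(\mathrm{diag}(x_1,\dots,x_n),1)\in L$. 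Finally, (1) applied to $(g,1)$ shows that strong $\theta_0$-regularity is equivalent to regularity of $g\in GL_n$.

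For \textbf{(3)}, the $k$-rational $\theta_0$-conjugacy classes in a fixed stable class are parametrized by $\ker\bigl(H^1(k,G_{\delta,\theta_0})\to H^1(k,\gnn)\bigr)$, where $G_{\delta,\theta_0}$ denotes the twisted centralizer. By (2) I may assume $\delta=(g_1,1)$. The same computation performed just before Lemma~\ref{art6lema} applies and yields $G_{\delta,\theta_0}=\{(x,x')\mid x\in Z_{GL_n}(g_1)\}\simeq Z_{GL_n}(g_1)$. For regular $g_1$ this is a product of Weil restrictions $\mathrm{Res}_{K_i/k}\G_m$, whose $H^1$ vanishes by Hilbert's Theorem~90, exactly as in Lemma~\ref{rt6lem1}. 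The main obstacle throughout is the descent step in (2)---that Kottwitz--Shelstad's norm delivers a representative of the $\theta_0$-class inside $L$ over $k$, not just over $\bar k$; once this is granted, everything else reduces to the regularity--centralizer dictionary and Hilbert~90.
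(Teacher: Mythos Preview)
Your arguments for the within-$L$ part of (2) and for (3) are exactly the paper's: both use the conjugation $I$ to pass from $\theta_0|_L$ to the swap $\sigma$, perform the explicit $\sigma$-conjugation $(a,b)\sim(ab,1)$, and then invoke Hilbert~90 for the induced torus $\prod_i\mathrm{Res}_{K_i/k}\G_m$ appearing as twisted centralizer. Your discussion of the $k$-rational descent in (2)---finding a rational representative of the $\theta_0$-class inside $L$ for an arbitrary strongly $\theta_0$-regular $\delta\in\gnn(k)$---actually goes beyond the paper, which simply writes ``Parts~1 and~2 reduce to straightforward calculations in the case of split base change'' and appeals to \cite{A-C},~\S I.5.

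For (1), however, your route via $Z_{\gnn}(N(\delta))$ with $N(\delta)=\delta\theta_0(\delta)$ has real gaps. Your converse (``if $N(\delta)$ is not regular then the $\mathrm{Ad}(\delta)\circ\theta_0$-fixed points of $Z_{\gnn}(N(\delta))$ cannot be a torus'') is not justified: the fixed-point group of an involution on a non-abelian reductive group can perfectly well be a torus. And even in the forward direction, the fixed points of an involution on a maximal torus form a diagonalizable group that need not be connected, so ``subtorus of the expected dimension'' requires an argument you do not give. The paper avoids all of this by staying inside $L$ and transporting via $I$ to the $\sigma$-picture: for $\delta$ of type $(g_1,1)$ one computes directly (the two lines the paper writes out) that the $\sigma$-twisted centralizer is $\{(x,x):x\in Z_{GL_n}(g_1)\}\simeq Z_{GL_n}(g_1)$, so ``twisted centralizer is a torus'' is literally ``$g_1$ is regular in $GL_n$'', which is the intended content of the regularity hypothesis on $\delta\delta^{\theta_0}$. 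You should redo (1) through the $I$-trick rather than through the ambient $Z_{\gnn}(N(\delta))$.
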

\begin{proof}
Parts $1$ and $2$ reduce to straightforward calculations in the case of split base change. If $\delta$ is of type $2$, its $\theta_0$-centralizer can be calculated with the bijection $I$ defined above and we have
\[
(x_1,x_2) \to (x_1, x_2)^\sigma (g_1, 1)(x_1, x_2)^{-1}= (g_1,1)
\]
Thus $x_1= x_2^{-1}$, $x_2g_1x_1= x_1^{-1}g_1 x_1= g_1$ and so this set is isomorphic to the centralizer of  $g_1$. Part $3$ is proved as in lemma \ref{rt6lem1} since the classes of stable $\theta_0$-conjugacy are parametrized by
\[
Ker\left(H^1(\gnn^{\delta,\theta_0} \to H^1(\gnn))\right)
\]
where $\gnn^{\delta,\theta_0}$ is the twisted centralizer. In that case, both groups $H^1$ are trivial.
\end{proof}

\begin{obs}\label{artobs8.9}
It is possible that, for all elements the notions strongly regular and regular coincide. We consider only the norm for the calculation of characters. So, it suffices to consider the dense set of elements strongly $\theta_0$-regular. Finally, only  elements having the form $2$ of lemma \ref{art6leme3}, occur in the calculation of the character. The norm calculated in these elements has a very simple form.  
\end{obs}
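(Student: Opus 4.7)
The remark bundles several assertions of different character: a speculation about strongly regular versus regular elements, a density/reduction argument, a support statement for the character, and an explicit description of the norm on the elements that remain. My plan would be to treat the last three rigorously and leave the first as an informal comment, as the author does.

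First, I would justify the density of the strongly $\theta_0$-regular locus. Strong $\theta_0$-regularity is the condition that the twisted centralizer of $\delta$ is a torus; this fails precisely on the zero set of the algebraic function $D^1_{\gnn,\theta}(\delta)$ introduced in \eqref{art6eqatn0}, which is a non-zero polynomial function on $\gnn$ (it is non-zero at any generic element of the diagonal torus, as one sees directly from the expression involving $(x_i x_j -1)$ and $(x_i+1)$ factors). Hence the strongly $\theta_0$-regular set is Zariski-open and dense in $\gnn$, and in particular dense in the $k$-analytic topology when $k$ is infinite. Combined with the local constancy of the twisted character on the $\theta_0$-regular set (Clozel \cite{Cl1}) and the fact that the invariant distributions in Proposition \ref{art6prop71} are computed by Weyl integration which is supported on conjugacy classes meeting the regular locus, this shows that every character identity we need can be tested on strongly $\theta_0$-regular elements alone.

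Next, I would reduce to form~$2$. The key input is the support statement recalled in the text before \eqref{art6eq5} (and its twisted analogue): $\Theta^{\Pi}_{\gnn,\theta}(\delta)$ vanishes unless $\delta$ is $\theta_0$-conjugate to an element of the Levi subgroup $L = GL_n \times GL_n$. Intersecting this support with the strongly $\theta_0$-regular locus and invoking Lemma~\ref{art6leme3}(2), every such $\delta$ is $\theta_0$-conjugate to one of the form $\delta=(g_1,1)$ with $g_1 \in GL_n$ regular. Since the twisted character and the twisted orbital integrals are both constant on $\theta_0$-conjugacy classes, it suffices to evaluate them on these representatives; this is exactly the content of the fourth sentence of the remark.

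Finally, I would write out the simple form of the norm on elements of type $\delta = (g_1,1)$. Conjugating $g_1$ into the diagonal torus so that its eigenvalues are $(x_1,\dots,x_n)$, the representative $\delta$ in $T$ becomes $(x_1,\dots,x_n,1,\dots,1)$, and the explicit description of $\cala$ given right before Proposition~\ref{6prp1} yields
\[
\cala(\delta) = (x_1,\dots,x_n,x_n^{-1},\dots,x_1^{-1}) \in T_{\son},
\]
which is visibly the very simple form alluded to. The only genuinely delicate point is the density/local-constancy reduction of step one, since it is what licenses us to ignore the complement of the strongly $\theta_0$-regular locus throughout the character comparison; the remaining steps are then immediate consequences of earlier results in the section, and the speculative first sentence of the remark need not be proved.
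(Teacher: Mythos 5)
This is a Remark, not a proposition, and the paper offers no proof for it --- the assertions are meant as informal glosses on the surrounding lemmas, so there is no ``paper's own proof'' for you to match. Your reconstruction is a reasonable rigorization of what the author leaves implicit, and it is essentially consistent with the logic of the section: support of the twisted character (Clozel) plus Lemma~\ref{art6leme3}(2) for the reduction to form~$2$, and the pre--Proposition~\ref{6prp1} description (equivalently Lemma~\ref{artformanorma}, which immediately follows the remark) for the explicit norm.

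One imprecision worth flagging: you identify the strongly $\theta_0$-regular locus with the non-vanishing set of $D^1_{\gnn,\theta}$, but that non-vanishing characterizes $\theta_0$-\emph{regularity}, i.e., that the identity component of the twisted centralizer is a torus of minimal dimension. Strong $\theta_0$-regularity is the stronger requirement that the full twisted centralizer, not merely its identity component, is a torus --- this is the distinction the first sentence of the remark is alluding to. Your density argument survives unchanged because the strongly regular locus is still Zariski-open and dense (the component group can only jump on a proper closed subset), but as written your step conflates the two notions rather than acknowledging that an extra, standard, genericity argument is being used. Also, when you write $\cala(\delta) = (x_1,\dots,x_n,x_n^{-1},\dots,x_1^{-1})$ for $\delta = (x_1,\dots,x_n,1,\dots,1)$, you are describing the map in the direction $\gnn \to \son$, which is the inverse of the map $\cala$ of Proposition~\ref{6prp1}; the paper itself is loose about which direction is being called ``the norm,'' and Lemma~\ref{artformanorma} records the direction $\son \to \gnn$, so you should say explicitly which convention you are using to avoid confusion.
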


\begin{lema}\label{artformanorma}
Let $T_1\subset GL_n$ be a maximal torus. The norm   $\cala$, mapping the torus $T_1\times T'_1$ (where $T'_1= K_0\, ^t T_1 K_0^{-1}$) into  $T_1\times \{1\}\subset L $, is given by
\[
(t_1, t'_1) \mapsto (t_1,1)
\]
\end{lema}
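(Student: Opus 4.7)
The plan is to verify the lemma by a direct unwinding of the explicit description of the norm $\cala$ given just before the statement, once both sides have been brought into the common diagonal torus.

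First I would reduce to the diagonal case. Conjugating inside $M$ if necessary, I may assume $t_1 = \mathrm{diag}(x_1,\dots,x_n) \in T_1 \subset GL_n$. Then $t_1' = K_0\, {}^t t_1^{-1} K_0^{-1}$, and because $K_0$ is antidiagonal, conjugation by $K_0$ reverses the order of the diagonal entries, so $t_1' = \mathrm{diag}(x_n^{-1},\dots,x_1^{-1})$. Under the embedding $M \hookrightarrow \son$ described at the start of the section, the pair $(t_1,t_1')$ becomes the element $\mathrm{diag}(x_1,\dots,x_n,x_n^{-1},\dots,x_1^{-1})$ of $T_{\son}$, which is exactly the shape of the standard elements of $T_{\son}$ used in the formula for $\cala$.

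Next I would handle the $\gnn$-side. The element $(t_1,1) \in L = GL_n \times GL_n \subset \gnn$, embedded diagonally as a $2n \times 2n$ matrix, is $\mathrm{diag}(x_1,\dots,x_n,1,\dots,1) \in T \subset \gnn$. By Lemma \ref{art6leme3}(2) this is a permissible representative of the $\theta_0$-conjugacy class, and by Lemma \ref{art6leme3}(3) together with Lemma \ref{rt6lem1} stable $\theta_0$-conjugacy coincides with $\theta_0$-conjugacy for such strongly $\theta_0$-regular elements, so it suffices to match the two representatives through the explicit canonical bijection $T_{\theta_0} \simeq T_{\son}$.

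Finally I would apply the explicit formula stated between Proposition \ref{6prp1} and the lemma: the map
\[
T \ni (y_1,\dots,y_n,1,\dots,1) \;\longmapsto\; (y_1,\dots,y_n,y_n^{-1},\dots,y_1^{-1}) \in T_{\son}
\]
is precisely the inverse of the canonical projection used to define $\cala$. Applying it to $(x_1,\dots,x_n,1,\dots,1)$ yields exactly the image $\mathrm{diag}(x_1,\dots,x_n,x_n^{-1},\dots,x_1^{-1})$ computed in the first step, which identifies $(t_1,t_1')$ as the norm of $(t_1,1)$. The only real point to verify carefully is the bookkeeping for the $K_0$-conjugation that produces the reversal in $t_1'$; once this is checked, the lemma becomes a transcription of the earlier formula, and there is no serious obstacle.
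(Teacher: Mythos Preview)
Your proposal is correct and follows essentially the same route as the paper: reduce to the diagonal torus over $\overline{k}$ and apply the explicit bijection $T \ni (x_1,\dots,x_n,1,\dots,1) \mapsto (x_1,\dots,x_n,x_n^{-1},\dots,x_1^{-1}) \in T_{\son}$ given just before the lemma. The paper's proof is two lines because it omits the explicit $K_0$-bookkeeping and the appeal to Lemmas~\ref{rt6lem1} and~\ref{art6leme3} that you spell out, but the argument is the same.
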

\begin{proof}
Since conjugacy and stable conjugacy are the same, it is enough to prove it over $\overline{k}$, for the diagonal torus. Thus we have
\[
(x_1, \dots, x_n, x^{-1}_n, \dots, x^{-1}_1) \mapsto (x_1, \dots, x_n, 1)
\]
as claimed.
\end{proof}

\subsection{Calculation of the transfer factor}
\subsubsection{Factor $\Delta_I$}
We shall describe $\Delta(\gamma, \delta)$. First, we recall the definition of $\Delta(\gamma, \delta)$ given by Kottwitz and Shelstad (\cite{K-S} pages $31-33$). In this case, we set $T_{\son}$ to be the centralizer of $\gamma$ in $\son$, which is a torus. Let $T_{2n} \subset \gnn$ be a torus. We denote by $\gnn^{sc}$, the simply connected covering space of the subgroup $(\gnn)_{der}$, the derived subgroup of $\gnn$. Thus, we have
\[
GL^{sc}_{2n}= SL_{2n}
\]
The morphism  $\theta^*_0$ defined in \cite{K-S}, page $31$  and the morphism $\theta_0$ are the same. We shall denote by $GL^{x}_{2n}$ (see \cite{K-S} \S $4.2$ page $31$) the group of fixed points of the automorphism $\theta_0$ in $GL^{sc}_{2n}$. Therefore, $GL^{x}_{2n}= SL_{2n}^{\theta_0}$. Since $\theta_0$ is described  by the matrix $J_0$, that is anti-symmetric, we have
\[
GL^{x}_{2n}= Sp_{2n}
\]
According to lemma $3.3$B of \cite{K-S} (page $28$), we choose a pair $(B,T)$ for $\gnn$ such that the norm $\cala : T_{\son} \to T/(\theta_0-1)T$ is defined over $k$ (and $T$  is $\theta_0$-stable). We shall fix the choice of this morphism such that 
\[
T=\{ (t_1,t'_1)\}\subset GL_n\times GL_n.
\]
$B$ is a Borel subgroup over $\overline{k}$ adapted to $T$ and $t'_1= K_0\,^tt_1^{-1}K_0$. Thus, if we consider the simply connected part, we obtain
\[
T^{sc}=\{( t_1,t'_1) \mid det(t_1,t'_1)= 1 \}=T
\]
We have $T^x= T^{sc}\cap GL^x_{2n}= T \cap Sp_{2n}$. Hence $T^x$ is a maximal torus of $M^x$ which is a Levi subgroup of $Sp_{2n}$. We have, as previously,  $H^1(k,T^x)={1}$. According to \cite{K-S} page $33$
\[
\Delta_I(\gamma,\delta)= \left< a, s \right>
\]
where $a \in H^1(k,T^x)$ and the element $s$ is deduced from $(T,\theta_0)$. Finally, we obtain
\[
\Delta_I(\gamma,\delta)=1
\]

\subsubsection{Factor $\Delta_{II}(\gamma,\delta)$}
We consider the torus $T= T_1\times T_1 \subset \gnn= \gnn^*$ and we consider also the orbits of $Gal(\overline{k}/k)$ on the system of restricted roots $R_{res}(\gnn,T)$, following \cite{K-S} \S $1.3$. This restriction gives
\[
R_{res}(\gnn,T)= \{ \alpha |_{T^{\theta_0}} \mid \alpha \in R(\gnn,T)\}
\]
We have $T^{\theta_0}= \{(t,t')\}$. Next, we calculate the roots over $\overline{k}$ and then
\begin{equation}\label{art6eqnokase}
T^{\theta_0}= \{x_1,\dots, x_n, x_n^{-1}, \dots, x_1^{-1}\}
\end{equation}
Thus the restricted roots are
\[
\begin{cases}  x_i\pm x_j \quad(i\neq j)\\ \pm 2x_i \end{cases}
\]
This set  is characterized by $R_{res}= R(\son^{\theta_0},T^{\theta_0})= R(\son^x, T^{\theta_0})$. Note that $R_{res}$ is a reduced root system. Now we use lemma $4.3.A$ from \cite{K-S}. We consider the orbits of the roots that do not appear in the group $\son$. In our case, these are the roots $\pm 2x_i$. We recall (see \cite{K-S} \S 1.3) that this term in defined in terms of the theory of Langlands-Shelstad \cite{L-S}. For any root system $\calr$ provided with an action of $\Gamma_k=Gal(\overline{k}/k)$ (in our case $R_{res}(\son, T)$), Langlands and Shelstad associate the data
\[
(\chi_{\alpha_{res}}, \,a_{\alpha_{res}})\qquad \alpha_{res}\in R_{res}
\]
Here $\alpha_{res}$ define a finite extension $k_{\alpha_{res}}/k$ such that $Gal(\overline{k}/k_{\alpha_ {res}})= \{ \sigma \in \Gamma_k\mid \sigma \alpha_{res}= \alpha_{res}\}$, the term $a_{\alpha_{res}}\in k^\times_{\alpha_{res}}$ and $\chi_{\alpha_{res}}$ is a character of $k^\times_{\alpha_{res}}$ (see \cite{L-S} \S$2.2$ and \S$2.5$).  However, the elements $a_{\alpha\, res}$ are not really important in our case. Recall that
\[
(\chi_{\alpha_{res}}, \,a_{\alpha_{res}})\qquad \alpha_{res}\in R_{res}
\]
is formed by the roots $(\pm x_i\pm x_j,\, 2x_i)$, where $T^{\theta_0}$ is described by \eqref{art6eqnokase} in $\overline{k}$. In particular, since in this case $T$ is a product of induced tori, $\prod k_i^\times$, where $k_i/k$ are extensions (embedded twice in $\gnn$), the Galois group $\Gamma_k$ permutes the roots $(\pm x_i\pm x_j)$ and $(\pm 2x_i)$. In particular, $\Gamma_k$ permutes the short roots and permutes separately the roots $2x_i$ and $-2x_i$. If 
$\alpha_{res}= 2x_i$, we see that the stabilizers of $\{\alpha_{res}\}$ and $\{\pm \alpha_{res}\}$ in $\Gamma_k$ coincide. We shall denote this group by $\Gamma_{\alpha_{res}}$. We have now the notion of "$\chi-data$" for any root $\alpha_{res}\in \calr$ (defined by \cite{L-S}, \S$2.5$), satisfying the following conditions
\begin{enumerate}
\item $\chi_{-\alpha_{res}}= \chi^{-1}_{\alpha_{res}}$.
\item $\chi_{\sigma\alpha_{res}}= \chi^\sigma_{\alpha_{res}}$,  for $\alpha_{res}\in \calr$ and $\sigma \in \Gamma_k$.
\end{enumerate}
There is also another condition (\cite{L-S}, \S $2.5$(iii), page $235$), but in our case it is not important. Finally, we have that the characters  $\chi_{\alpha_{res}}$ are independent. In particular, the family given by $\chi_{\alpha_{res}}=1$ (long roots), extended in an arbitrary way for the short roots, verifies the conditions. Thus, for any choice of factors $a_{\alpha_{res}}$ the factor $\Delta_{II}$, according to lemma $4.3.A$, is given by
\[
\Delta_{II}= \prod_{\alpha_{res},\,\text{long}} \chi_{\alpha_{res}}\left(\frac{\alpha(\delta)-1}{a_{\alpha_{res}}}\right)
\]
and is equal to $1$.

\subsubsection{Factor $\Delta_{III}$}
This factor is the most difficult to define. However, in our case it simplifies since $\gnn$ is split and $\theta_0$ fixes the torus and the standard Borel subgroup. The definition of  $\Delta_{III}$ appeals to the hypercohomology of a complex of two chains $f:A \to B$ of $\Gamma_k$-modules, where $\Gamma_k$ is the absolute Galois group of $k$, or it could be also the Weil group $W_k$, see the appendix $A$ in \cite{K-S}.
We recall that 
\[
\xymatrix{H^1(\Gamma_k,  A \ar[r]^-f & B)= Z^1/B^1,
}
\]
where 
\[
Z^1=\{ (c,b)\mid c\in Z^1(\Gamma_k,  A),\, b\in B,\,\partial b= f(c)\}
\]
and the differential operator $\partial:A\to Z^1(\Gamma_k, A)$ is given by $\partial(a)(\sigma)= a^{-1}\sigma (a)$, $\sigma\in \Gamma_k$. The set of coboundaries  $B^1$ is given by
\[
B^1=\{ (\partial a, f(a))\mid  a\in A\}
\]
We have also a long exact sequence
\begin{equation}\label{6eqhyper1}
\xymatrix{ \cdots \ar[r] & H^0(\Gamma_k, A) \ar[r]^f & H^0(\Gamma_k, B)  \ar[r]^j & H^1(\Gamma_k,  A\ar[r]^-f & B) \ar[r]^-i &\\
  & \ar[r]^-i & H^1(\Gamma_k, A) \ar[r]^f & H^1(\Gamma_k, B) \ar[r]  & \cdots & 
}
\end{equation}
where the mappings $i:(c,b) \to $ class of $c \in H^1(\Gamma_F, A)$ and $j: c' \mapsto (0,c')$, $c'\in H^0(\Gamma_F,B)\subset B$. In our case, the constructions are simplified.

Let $T\subset \gnn$ be a maximal $\theta_0$-stable torus, contained in a  $\theta_0$-stable Borel subgroup (non necessarily defined over $k$) denoted by $B_{\overline{k}}$. Consider $T_{\theta_0}= T/(1-\theta_0)T$ and $N:T\to T_{\theta_0}$ the quotient map. Let $\delta \in \son(k)$ and $\overline{\gamma} \in T_{\theta_0}(k)$. We say that $\overline{\gamma}$ is a norm of $\delta$ if $g^{-1}\delta\theta_0 g= t$,  $g\in \gnn(\overline{k})$,  $t\in T(\overline{k})$ and $N(t)= \overline{\gamma}$.  We have $\theta_0$ such that 
$g\mapsto J_0\, ^tg^{-1} J_0:= g'$, and $J_0$ is our known matrix
\[
J_0= \begin{pmatrix} & & & & -1\\ & & & 1 &\\ & & \Ddots & & \\ & -1 & & &\\ 1 & & & &\end{pmatrix}= \begin{pmatrix} & K_0\\ K_0 & \end{pmatrix}
\]
This matrix defines a symplectic group. Recall that $\son$ is defined by the matrix
\[
J'= \begin{pmatrix} & -K_0\\ K_0 & \end{pmatrix}
\]
The previous calculations of the characters remain true by a change of variables. We have in $\son$ a Levi subgroup $M'$
\[
M'= \begin{pmatrix} g & \\ & g' \end{pmatrix}. 
\]
Recall that $g'= J_0\, ^tg^{-1} J_0$. Its dual group 
\[
\widehat{M'} \subset \widehat{SO}_{2n} \subset GL_{2n}(\C) = \widehat{GL}_{2n}
\]
is given by the same equations. On the other hand, Kotwittz and Shelstad  (\cite{K-S} page $4$) define
\[
\widehat{G}^1= \widehat{GL}_{2n}^{\widehat{\theta}_0}= Sp(J_0, \C)
\]
The morphism $\widehat{\theta}_0$ identifies with $\theta_0$ and 
\[
^L\widehat{G}^1 \simeq \widehat{G}^1 \times W_k
\]
In particular $^L\widehat{G}^1$ and $^L\widehat{SO}_{2n}$ have the common subgroup 
\[
\widehat{M'}= \left\{ \begin{pmatrix} g & \\ & g' \end{pmatrix} \right\}\times W_k
\]
If $T= T_1 \times T'_1$ is one of the tori we consider, the morphisms $\theta_0$ and $\widehat{\theta}_0$ coincide on $T$. We have $T_{\theta_0}\simeq T_1$. According to the definition of the norm $\cala$,  an element $\gamma \in T_{\theta_0}(k)= T_1(k)$ is the norm of $\delta$ if $\delta= \cala(\gamma)$. In particular, for $\gamma$ strongly regular, $\delta$ ($k$-rational element) exists and is unique up to $\theta_0$-conjugacy ($k$-rational). From this data, Kottwitz and Shelstad  define 
\[
c= c_\sigma \in Z^1(\Gamma_k, T(\overline{k}))
\]
with the condition
\[
c_\sigma = g\sigma (g)^{-1},
\]
which is equal to $1$ in our case, since $g=\delta$ is rational. We have now
\[
c_\sigma  \theta_0(c_\sigma)^{-1}= 1= \gamma \sigma(\gamma)^{-1}
\]
Setting $A=B=T$, in the above construction,  $T=T(\overline{k})$ and $f= 1-\theta_0$, we remark, that according to the definitions, the term
\[
(c_\sigma=1,\gamma) \in Z^1(\Gamma_k,T)\times T
\]
is a hypercocyle, but it is not a hypercoboundary. It defines, thus, an element 
\[
\xymatrix{\V \in H^1(\Gamma_k,\, T  \ar[r]^-{\,\,1-\theta_0} & T)
}
\]
Let $\widehat{T}\subset \widehat{GL}_{2n}$ be  the dual torus of $T$ (provided with the action of $W_k$ or $\Gamma_k$ since $T_1 = T_1= K_1^\times \times \cdots \times K^\times_r$, where $K_i/k$ are field extensions). We have $\widehat{T}= \widehat{T}_1\times \widehat{T}_1$, where $\widehat{T}_1\subset GL_n(\C)$ and the torus $T_G$ of $\son$ is isomorphic to $T_1$. According to Langlands-Shelstad \cite{L-S}, we can construct naturally a homomorphism of $L$-groups
\[
\xymatrix{^LT_1= \widehat{T}_1\rtimes W_k \ar[rd] \ar[r]  &  ^LGL_n= GL_n(\C)\rtimes W_k \ar[d]\\ & W_k
}
\]
We denote by $\xi^1$ this morphism. We write $\xi^1(\hat{t}_1,w)= (\xi^1(\hat{t}_1,w),w)$, by  abuse of notation. Thus, the morphism $\xi: ^LT_G= \widehat{T}_1\rtimes W_F\to\, ^L\gnn$, is given by 
\[
(\hat{t}_1,w) \mapsto \left(\begin{pmatrix} \xi_1(\hat{t}_1,w) & \\ & \xi'_1(\hat{t}_1,w) \end{pmatrix},\,  w\right)
\]
It maps  $^LT_{G}$ into the Levi $L$-subgroup common to $^L\widehat{SO}_{2n}$ and $^L G_1$. The two morphisms 
$\xi:\, ^L T_{G}\to \, ^L\widehat{SO}_{2n}$ and $\xi_1:^L T_{G}\to \,^L G^1$ (defined in \cite{K-S} page $4$)  coincide in this case. Their difference $\xi \xi_1^{-1}:\,^L T_{G }\to \, ^L\widehat{SO}_{2n}$ defines a $1$-cocycle $A\in Z^1(W_k, \widehat{T})$. Thus $A=1$ in our case. 

On the other hand, by construction, $^L\widehat{SO}_{2n}$ and $\widehat{G}^1$ appear naturally as $\theta_0$-centralizers of  $\theta_0$-semi-simples elements $s \in \gnn(\C)$.  Since $\theta_0$ is symplectic, the group $\widehat{G}^1$ is simply defined by $s=1$. It is equal to the set of elements of $\widehat{GL}_{2n}$ such that 
\[
(g,\theta_0)1(g,\theta_0)^{-1}= gJ_0\, ^tg^{-1}J_0^{-1}= 1
\]
If we consider now
\begin{equation}\label{6eq0191}
s= \begin{pmatrix} 1 & \\ & -1\end{pmatrix}
\end{equation}
such that $J=J_0 s$, the twisted endoscopic group becomes $\widehat{SO}_{2n}$. Now Kotwittz and Shelstad consider the pair 
\[
(A,s)=(1,s)\in Z^1(W_k,\widehat{T})\times \widehat{T}
\]
We have now $f(1)=1= \partial s= s^{-1}\sigma(s)$, $\sigma \in W_k$. Since $s$ is given by equation \eqref{6eq0191}, it belongs to $\G_m\times \G_m\subset \widehat{T}_1\times \widehat{T}_1$.  If $T_1  = \prod K_i^\times$, $\G_m$  is the $L$-group of $k^\times$ (for the map $T_1 \to \G_m/k$, given by the product of norms). Thus the groups $W_k$ or $\Gamma_k$ act trivially. Therefore
\[
(c=A=1,s)\in Z^1(W,\widehat{T})\times \widehat{T}
\]
is a hypercocycle, but not a hypercoboundary. Thus, it defines an element
\[
\xymatrix{\A \in H^1(W_k,\,\widehat{T}  \ar[r]^-{\,\,1-\theta_0} & \widehat{T})
}
\]
The factor $\Delta_{III}$ is then defined as
\begin{equation}\label{6eq0201}
\Delta_{III}(\gamma,\delta)= \left<\V, \A\right>,
\end{equation}
given a Tate  pairing  in hypercohomology (this is the main object in the definition of \cite{K-S}, appendix $A$). Recall that neither $\V$ nor $\A$ are trivial classes. 

Now an isotropy result, assures the triviality. Consider the following exact  sequence
\begin{equation}\label{6eqhyper2}
\xymatrix{ \cdots \ar[r]  & H^0(\Gamma_k, T) \ar[r]^-j & H^1(\Gamma_k,  T \ar[r]^-f & T) \ar[r]^-i & H^1(\Gamma_k, T) \ar[r]^-f   & \cdots
}
\end{equation}
where $f= 1-\theta_0$. Since $T=T_1\times T'_1$ is a reduced torus, we have
\[
H^1(\Gamma_k, T)= \{1\}
\]
Now, the have the following lemma:
\begin{lema}\label{6lemmanos4}
For the element $\V$, we have
\[
\xymatrix{\V \in Im\Big(H^0(\Gamma_k,T) \ar[r]^-j & H^1(\Gamma_k,\,T \ar[r]^-f & T)\Big)
}
\]
\end{lema}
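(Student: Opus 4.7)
The plan is to read off the conclusion directly from exactness of the long exact sequence \eqref{6eqhyper2}, together with the explicit description of $\V$ carried out just above the lemma. I do not foresee any substantial obstacle: the bookkeeping has already been done in the paragraphs preceding the statement.

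First, I would recall that $\V$ is, by construction, represented by the hypercocycle $(c_\sigma,\gamma) = (1,\gamma)$. The cocycle component is trivial because one writes $c_\sigma = g\sigma(g)^{-1}$ with $g = \delta \in \son(k)$ already $k$-rational; and the compatibility $\partial(\gamma) = f(c_\sigma) = 1$ simply reflects the fact that $\gamma \in T_{\theta_0}(k)$ is $\Gamma_k$-fixed, as required for $(1,\gamma)$ to define a hypercocycle with values in $A \to B$ where $A = B = T$ and $f = 1-\theta_0$.

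Next, I would observe that the map $i$ in the long exact sequence \eqref{6eqhyper2} sends a hypercocycle $(c,b)$ to the class of $c$ in $H^1(\Gamma_k,T)$. Applied to $\V$, the cocycle component is trivial, so $i(\V)=0$. Equivalently (and this is the shortcut the paper has set up), since $T = T_1 \times T_1'$ is a product of induced tori of the form $\mathrm{Res}_{K_i/k}\mathbb{G}_m$ attached to the partition $n = n_1 + \cdots + n_r$, Hilbert's Theorem~90 gives $H^1(\Gamma_k,T) = \{1\}$, a vanishing already recorded just before the lemma. Either way, $i$ annihilates $\V$.

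By exactness of \eqref{6eqhyper2} at the middle term, $\ker(i) = \mathrm{Im}(j)$, and consequently $\V \in \mathrm{Im}(j)$, which is the desired conclusion. The substantive content of the lemma is therefore the ability to take the $k$-rational representative $\delta$, which trivialises the cocycle $c_\sigma$; everything else is formal exactness.
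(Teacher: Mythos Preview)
Your argument is correct and matches the paper's approach: the paper records $H^1(\Gamma_k,T)=\{1\}$ (induced torus, Hilbert~90) immediately before stating the lemma, so the conclusion follows from exactness of \eqref{6eqhyper2} at the middle term. Your additional direct observation that the cocycle component of $\V$ is already trivial (so $i(\V)=0$ without invoking the vanishing of the whole $H^1$) is a harmless refinement; the paper does not spell this out but it is implicit in the discussion of $(c_\sigma,\gamma)=(1,\gamma)$ preceding the lemma.
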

Note that  neither $H^0(\Gamma_k,T)$ is  trivial, nor the mapping $(1-\theta_0)$ is not injective. In a similar way, for $\A$, we have the following exact sequence in hypercohomology:
\begin{equation}\label{6eqhyper3}
\xymatrix{ \cdots \ar[r]  & H^0(W, \widehat{T}) \ar[r]^-{\widehat{j}} & H^1(W,  \widehat{T} \ar[r]^-{1-\hat{\theta_0}} & \widehat{T}) \ar[r]^-{\widehat{i}} & H^1(W, \widehat{T}) \ar[r]  & \cdots
}
\end{equation}
where $\widehat{i}$ is the map $(c,\widehat{t})\to $ class of $c$ and $\widehat{j}$ is the map such that $c' \to (1,c')$. Here we have $(c,\hat{t})= (1,s)$, $\hat{i}(\A)= 1$. We have now the following proposition:
\begin{prop}[Kottwitz-Shelstad, page $137$]\label{6propkillbill}
For all elements $u \in H^0(\Gamma_k, T)$, $\hat{z}\in H^1(W_k,\widehat{T} \to \widehat{T})$
\[
(j(u),\widehat{z})= (u,\widehat{i}(\widehat{z}))
\]
where the product on the right side is the Langlands pairing  (\cite{L3}, \cite{Lab2})
\[
H^0(\Gamma_k,T)\times H^1(W,\widehat{T}) \to \C^\times
\]
\end{prop}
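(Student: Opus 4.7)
The proposition is a formal consequence of the construction of the Tate pairing in hypercohomology from Appendix A of \cite{K-S}, together with the compatibility between the Langlands pairing for tori and the boundary maps in the long exact sequences \eqref{6eqhyper2} and \eqref{6eqhyper3}. My plan is to verify this compatibility directly on explicit cocycle representatives, so that the statement reduces to a single substitution.

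First, I would choose representatives. For $u \in H^0(\Gamma_k, T) = T^{\Gamma_k}$, the class $j(u)$ is represented by the hypercocycle $(1, u) \in Z^1(\Gamma_k, T \to T)$: the $1$-cocycle component is trivial, which is compatible with the coboundary condition because $\partial u = 1$ (since $u$ is $\Gamma_k$-fixed) and $f(1)=1$. For $\widehat{z} = (C, s) \in Z^1(W_k, \widehat{T} \to \widehat{T})$, the image $\widehat{i}(\widehat{z})$ is by definition the class of $C$ in $H^1(W_k, \widehat{T})$.

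Next, I would invoke the explicit formula for the hypercohomology pairing from Appendix A of \cite{K-S}. On cocycle representatives, the pairing between $(c, b) \in Z^1(\Gamma_k, T \to T)$ and $(C, s) \in Z^1(W_k, \widehat{T} \to \widehat{T})$ decomposes as a product of two cup-product contributions: one coupling $c$ with $s$ (in bidegree $(1,0)$), and one coupling $b$ with $C$ (in bidegree $(0,1)$). Substituting the representative $(1, u)$ for $j(u)$, the first contribution becomes trivial, and the second reduces precisely to the cup product of $u \in H^0(\Gamma_k, T)$ with the class $[C] \in H^1(W_k, \widehat{T})$, which is the Langlands pairing $\langle u, \widehat{i}(\widehat{z}) \rangle$ in the normalization of \cite{L3, Lab2}.

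The main obstacle is to verify that the hypercohomology cocycle formula really splits in this clean fashion and that the residual pairing matches the Langlands pairing on the nose rather than a twist of it; this requires matching the Galois-theoretic side with the Weil-theoretic side (the two appear in \eqref{6eqhyper2} and \eqref{6eqhyper3} respectively) and tracking sign conventions. Both issues are settled by the general duality formalism of Appendix A of \cite{K-S}: what we need is precisely the functoriality of the pairing with respect to the boundary maps $j$ and $\widehat{i}$, applied to the specific complexes $T \xrightarrow{1-\theta_0} T$ and $\widehat{T} \xrightarrow{1-\widehat{\theta}_0} \widehat{T}$. Once this functoriality is granted, the remaining normalization can be pinned down on a single nontrivial test case.
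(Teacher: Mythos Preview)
The paper does not supply its own proof of this proposition: it is quoted verbatim as a result of Kottwitz--Shelstad (Appendix~A, page~137 of \cite{K-S}) and used as a black box to conclude $\Delta_{III}(\gamma,\delta)=1$. So there is no argument in the paper to compare yours against.

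That said, your sketch is a faithful outline of how the Kottwitz--Shelstad argument actually goes. The identification of $j(u)$ with the hypercocycle $(1,u)$ and of $\widehat{i}(\widehat{z})$ with the class of the $1$-cocycle component is exactly the content of the maps $j$ and $\widehat{i}$ as the paper records them, and the splitting of the hypercohomology pairing into two cup-product pieces is precisely the mechanism of Appendix~A in \cite{K-S}. Your honest acknowledgement that the remaining work is a normalization check (matching the Galois and Weil sides, and pinning down signs) is accurate; this is what \cite{K-S} carries out in their appendix, and it is not something one can shortcut. In effect you have reproduced the skeleton of the cited proof rather than offered an alternative, which is entirely appropriate here since the paper itself defers to the same source.
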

Since $\V$ is in the image of $j$ and $\widehat{i}(\A)=1$, we deduce that $\Delta_{III}(\gamma,\delta)= 1$. Finally, we have proved the following theorem:
\begin{teo}\label{art6teokillbill}
Let $T= T_1\times T'_1\subset L$  be a torus and $T_{G}= \{ (t_1, t'_1)\} \subset M$ be the associated torus,  $\delta \in T$, $\gamma \in T_{G}$ associated elements. Then
\[
\Delta(\gamma,\delta)= \Delta_{IV}(\gamma,\delta).
\]
\end{teo}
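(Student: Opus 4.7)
The plan is simply to assemble the vanishing statements for the three ``other'' factors in the Kottwitz--Shelstad decomposition
\[
\Delta(\gamma,\delta) \;=\; \Delta_I(\gamma,\delta)\,\Delta_{II}(\gamma,\delta)\,\Delta_{III}(\gamma,\delta)\,\Delta_{IV}(\gamma,\delta),
\]
and verify that $\Delta_I=\Delta_{II}=\Delta_{III}=1$ in the present setting. The work done in the preceding subsections performs each of these verifications; the proof is then a one-line assembly.

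First I would recall the computation of $\Delta_I$: following the recipe of \cite{K-S} we pass to the simply connected cover $GL_{2n}^{sc}=SL_{2n}$, form the fixed-point group $GL_{2n}^{x}=SL_{2n}^{\theta_0}=Sp_{2n}$, and consider $T^x=T\cap Sp_{2n}$. Since $T=T_1\times T_1'$ is a product of induced tori $\mathrm{Res}_{K_i/k}\G_m$, so is $T^x$, hence Hilbert~90 gives $H^1(k,T^x)=\{1\}$; by the formula $\Delta_I(\gamma,\delta)=\langle a,s\rangle$ with $a\in H^1(k,T^x)$, this forces $\Delta_I=1$. Next, for $\Delta_{II}$ I would invoke the explicit description of the restricted root system $R_{\mathrm{res}}(\gnn,T)=\{\pm x_i\pm x_j,\,\pm 2x_i\}$: the roots of $\son$ appearing as short roots $\pm x_i\pm x_j$ contribute trivially because $\chi_{\alpha_{\mathrm{res}}}$ may be chosen arbitrarily on them, while the ``long'' roots $\pm 2x_i$, which do not appear in $\son$, admit the admissible $\chi$-data with $\chi_{\alpha_{\mathrm{res}}}\equiv 1$ (satisfying both $\chi_{-\alpha}=\chi_\alpha^{-1}$ and the Galois-equivariance, as the Galois group only permutes the $\pm 2x_i$ among themselves). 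With this choice Lemma 4.3.A of \cite{K-S} gives $\Delta_{II}=1$.

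The main obstacle is $\Delta_{III}$, which is defined via a Tate pairing between hypercohomology classes $\V\in H^1(\Gamma_k,T\xrightarrow{1-\theta_0}T)$ and $\A\in H^1(W_k,\widehat T\xrightarrow{1-\hat\theta_0}\widehat T)$. Neither $\V$ nor $\A$ is individually trivial, so one must exploit the exact sequences \eqref{6eqhyper1}--\eqref{6eqhyper3}. On the dual side, the key input is that the embeddings $\xi,\xi_1:{}^LT_G\to{}^L\gnn$ coincide in this case because $\theta_0$ is symplectic and the same common Levi subgroup accommodates both ${}^L\widehat{SO}_{2n}$ and $\widehat G^1$; hence the cocycle $A\in Z^1(W_k,\widehat T)$ is trivial, and $\A$ is represented by $(1,s)$, so $\hat i(\A)=1$. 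On the primal side, $\delta=g$ can be taken rational (Lemma \ref{art6leme3}), so the cocycle $c_\sigma=g\sigma(g)^{-1}=1$ is trivial; thus $\V$ is represented by $(1,\gamma)$ and lies in the image of $j:H^0(\Gamma_k,T)\to H^1(\Gamma_k,T\to T)$, which is exactly the content of Lemma \ref{6lemmanos4}.

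Having placed $\V$ in the image of $j$ and noted $\hat i(\A)=1$, I would finish by applying Proposition \ref{6propkillbill} (the Kottwitz--Shelstad adjunction between the hypercohomological and Langlands pairings):
\[
\Delta_{III}(\gamma,\delta)\;=\;\langle\V,\A\rangle\;=\;\langle j(u),\A\rangle\;=\;\langle u,\hat i(\A)\rangle\;=\;\langle u,1\rangle\;=\;1,
\]
where $u\in H^0(\Gamma_k,T)$ is a preimage of $\V$ under $j$. Combining this with $\Delta_I=\Delta_{II}=1$ in the product decomposition yields $\Delta(\gamma,\delta)=\Delta_{IV}(\gamma,\delta)$, which is the assertion of the theorem. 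The only point requiring genuine computation is the identification of $\xi$ with $\xi_1$ and the verification that $c_\sigma=1$; everything else is bookkeeping in the long exact sequences together with the duality of Proposition \ref{6propkillbill}.
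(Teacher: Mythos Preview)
Your proposal is correct and follows essentially the same approach as the paper: the entire subsection preceding the theorem computes $\Delta_I$, $\Delta_{II}$, and $\Delta_{III}$ one by one, showing each equals $1$ by exactly the arguments you outline (Hilbert~90 for $T^x$, the choice $\chi_{\alpha_{\mathrm{res}}}=1$ on the long roots $\pm 2x_i$, and the adjunction of Proposition~\ref{6propkillbill} applied to $\V\in\mathrm{Im}(j)$ and $\hat i(\A)=1$). The theorem is then just the assembly of these three vanishing statements, so your summary matches the paper's argument line by line.
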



\section{Local Arthur's packets}
In this section, we study the composition of the Arthur's local packet. We establish this composition from the calculation of characters and also from Arthur's conjectures. We consider $\son$ as an elliptic twisted endoscopic subgroup of $\gnn$. Recall that a twisted endoscopic group $H$ of $\gnn$ is called \emph{elliptic} if $(Z(\widehat{H})^{\Gamma_\Q})^0$ is contained in $Z(\widehat{GL}_{2n}^0)$. Denote by $\cale_{\text{ell}}(\gnn)$, the set of twisted endoscopic groups of $\gnn$. With the aim of describing all the set $\cale_{\text{ell}}(\gnn)$, Arthur decomposes the integer $2n$ as a sum of positive integers
\begin{equation}\label{artartinteg}
2n= N_0+ N_s
\end{equation}
where both $N_0$ and $N_s$ are even. We will say that an element $H \in \cale_{\text{ell}}(\gnn)$ is \emph{primitive} if one of them $N_0$ or $N_s$, are equal to zero. We denote by $\cale_{\text{prim}}(\gnn)$,  the subset of primitive elements in $\cale_{\text{ell}}(\gnn)$. The case of $\son$ split occurs when $N_0= 2n$. We denote by $\widetilde{\Psi}(\gnn)$, the set of parameters of $\gnn$ invariant  by the outer automorphism $\theta$. Likewise, we set $\widetilde{\Psi}(\son)$, to be the parameters in $\widetilde{\Psi}(\gnn)$ factorizing through $^LSO_{2n}$. We denote by $\dsonv$, the local form of the dual group $\dson$. Let $SO_\gamma(f)$ be the stable orbital integral of the function $f$, $f \in C^\infty_c (\son(\Q_v))$ and   $\gamma \in \son$ a strongly regular element. Let $\widetilde{\calh}(\son)$ be defined by
\[
\widetilde{\calh}(\son)= \{ \text{functions on $\son \mid\, SO_\gamma(f)$ is  Out$(\son)$-invariant} \}.
\]
For each parameter $\psi \in \widetilde{\Psi}(\gnn)$ and any function $\phi \in  C^\infty_c (\gnn(\Q_v))$ we consider the local twisted trace:
\[
\text{trace}(\Pi(\psi)(\phi)A_\theta)
\]
where $\Pi$ is the associated representation of $\gnn(\Q_v)$ and $A_\theta$ is the operator that intertwines  $\Pi$ and $\Pi\circ \theta$ (we have by hypothesis $\Pi \simeq \Pi \circ \theta$). This intertwining operator is defined by Arthur (\cite{Ar2}, page $244$).

On the other hand, if we assume that the twisted form of the conjecture stated by  Langlands and Shelstad holds  for the local field $\Q_v$, we obtain an map from the Hecke algebra $\calh(\gnn) \to \widetilde{\calh}(\son)$, given by
\[
\phi \to \phi^{\GO}:= f
\]
where  $\phi$ and $f$ are associated (\emph{cf}. section \ref{artasctele}). We shall calculate some of the objects defined by James Arthur. We recall that in the local case we can identify the local Langlands group $\call_{\Q_v}$ with $W_{\Q_v}$, the absolute Weil group  of $\Q_v$. Thus, we have a parameter given by
\[
\psi:W_{\Q_v}\times SL_2(\C) \to SO_{2n}(\C) \hookrightarrow GL_{2n}(\C)
\]
This local parameter, in our case, has the particular form
\begin{equation}\label{elnuestrito}
\psi= \mu\otimes sp_n \oplus \mu^{-1} \otimes sp_n
\end{equation}
Recall that $sp_n$ is the representation of degree $n$ of $SL_2(\C)$ (since $n$ is even in this case it is symplectic) and $\mu$ is the character $x\mapsto |x|^s$, $s\in \C$. Therefore, the representation associated to this parameter, is the induced representation
\begin{equation}\label{parametrewhitak}
\Pi(\psi)=Ind^{\son}_{GL_n}\left(\mu \circ det \otimes \mu^{-1} \circ det \right).
\end{equation}
We consider normalized induction. The variable $s$ satisfies $|s| < \frac{1}{2}$. Hence, if $\phi$ is a function in $\calh(\gnn)$, the twisted character $\phi_{\gnn}(\psi)$ is defined as:
\[
\phi_{\gnn}(\psi):= trace\left( \Pi(\psi)(\phi)A_\theta \right),
\]
where $A_\theta$ is an intertwining operator naturally associated to $\theta$. For this calculation, we shall rather consider $A_\theta= A_{\theta_0}$, with the matrix $J_0$ fixing the Whittaker model (in fact Arthur uses this theory of Whittaker models to define an extension of this representation to the non-connected group $GL_{2n}\rtimes \Z/2\Z$, see \cite{Ar2}, page $244$). Now we embed this representation \eqref{parametrewhitak} into a larger representation denoted by $R$:
\begin{equation}\label{repchoncha6}
	R= Ind_{GL_n\times GL_n}^{\gnn} \left( Ind_{B_n}^{GL_n}
          (\mu\otimes \cdots \otimes \mu)\delta_{B_n} \otimes
          Ind_{B_n}^{GL_n} (\mu^{-1}\otimes \cdots \otimes \mu^{-1})\delta_{B_n} \right)
\end{equation}
$B_n$ is the usual Borel subgroup of $GL_n$ and $\delta_{B_n}$ is the modular character of $B_n$. This representation $R$ has, as quotient, the representation $\Pi(\psi)$ given by \eqref{parametrewhitak}:
\[
\xymatrix{
R \ar @{->>}[r] & \Pi(\psi)
}
\]
Arthur defines $A_\theta$ as the quotient operator of the operator over $R$, fixing the Whittaker vector and  equal to $1$ in the unramified vector. We use this operator for the quotient representation. We need also the group $\call_\psi$ (pages $240-241$ \cite{Ar2}). Arthur defines a group denoted by $\call_\psi$, associated to any parameter $\psi \in \widetilde{\Psi}(\gnn)$  factorizing through $^L \son$, i.e., $\psi \in \widetilde{\Psi}(\son)$. By the definition of $\widetilde{\Psi}(\son)$, this parameter is self-dual with respect to the automorphism $\theta$. Recall, that in general, such a parameter has the form
\begin{equation}\label{ptravezpsi}
\psi= l_1 \psi_1 \boxplus \cdots \boxplus l_r \psi_r
\end{equation}
Let $\cali$ be  the set of indices $i$ such that $\psi_i^\theta= \psi$. The complement of $\cali$ is made of two disjoint sets $\calj$ and $\calj'$ that are bijective to each other. The bijection is given by the mapping $j \mapsto j'$ such that $\psi_j^\theta = \psi_{j^\prime}$ for every  $j \in \calj$. Thus the parameter $\psi$ can be written as
\begin{equation}\label{losgrandotes}
\psi = \left( \BIGboxplus_{i\in \cali} l_i\psi_i\right) \BIGboxplus
\left( \BIGboxplus_{j\in \calj}l_j(\psi_j \boxplus \psi_{j^\prime}) \right)
\end{equation}
For $i \in \cali$ Arthur applies the global and local induction hypothesis (page $240$ \cite{Ar2}, to the automorphic cuspidal self-dual representation $\mu_i$ of $GL_{m_i}$ associated to $\psi_i$. This gives an endoscopic data in $\cale_{\text{prim}}(G_{m_i}\rtimes \theta)$.  If $j \in \calj$ we set $G_j= GL_{m_j}$. We obtain thus a group $G_\alpha$, over the local field $\Q_v$, for $\alpha \in \cali \cup \calj$. Let $^LG_{\alpha}$ be the Galois form of its $L$-group. Arthur now defines the group $\call_\psi$, as the fibered product of these groups over $\Gamma_{\Q_v}$.
\begin{equation}\label{elpinelepsi}
\call_\psi = \prod_{\alpha \in \cali \bigcup \calj}(^L G_\alpha \to \Gamma_{\Q_v})
\end{equation}

We can define this group globally and locally. We shall study the local form. This group $\call_\psi$ is an extension of the Galois group. We suppose that $\psi$ factorizes through $^L\sonv$ and thus, there must exist an $L$-homomorphism $\psi_{\sonv}$:
\[
\psi_{\sonv}: \call_\psi \times SL_2(\C) \to\, ^L\sonv.
\]
Next, we define $\tilde{\psi}$ as the composition $\xi\circ\psi_{\sonv}$, where $\xi$ is the morphism of the twisted endoscopic data. The morphisms $\psi$ and $\xi$ are $\gnn(\C)$-conjugacy classes of homomorphisms, so $\psi_{\sonv}$ is defined up to conjugacy. Thus, $\tilde{\psi}$ defines an embedding 
\[
\widetilde{\psi}: \call_\psi \times SL_2(\C) \to GL_{2n}(\C) \times \Gamma_{\Q_v} = \,^L\gnn
\]
Finally, there is another object we shall use: the group defined as
\[
 S_\psi= S_\psi(\sonv)=
Cent_{\dsonv}(\psi_{\sonv}(\call_\psi \times SL_2(\C))).
\]
Where $Cent$ is the centralizer. We are interested in the quotient group $\boldsymbol{\cals}_\psi
= S_\psi/S_\psi^0 Z(\dsonv)^{\Gamma_{\Q_v}}$ which is a finite abelian group. In this group $S_\psi$ there is a canonical element denoted by $s_\psi$ and defined as
\begin{equation}\label{elesepsi}
s_\psi= \psi_{\sonv}\left(1,\begin{pmatrix} -1 & 0\\ 0 & -1 \end{pmatrix} \right)
\end{equation}
Its image in $\boldsymbol{\cals}_\psi$ is denoted  by $\overline{s}_\psi$.

We go back to our parameter $\psi$ which, as we saw above, has the form:
\[
\psi= \psi_1 \boxplus \psi_2= \mu\otimes sp_n \oplus \mu^{-1}\otimes sp_n
\]
For this parameter $\psi$, one has two possible cases.
\begin{enumerate}
\item $\mu\neq \mu^{-1}$ (we shall call this one, the regular case). Then
  $\psi_1 \neq \psi^{\theta}$. We set $G_1= GL_1$ and $\call_\psi=
  GL_1 \times \Gamma_{\Q_v}$,  and we recall that $\Gamma_{\Q_v}$ is the Galois group of
   $\Q_v$. Consequently we have a morphism
\[
\call_\psi \times SL_2(\C) \to GL_{2n}(\C)
\]
given by 
\[
(z,s)\mapsto z\, sp_n(s) \oplus z^{-1}sp_n(s)
\]
(recall that $sp_n$ is the irreducible representation of  $SL_2(\C)$ of degree $n$).
\item $\mu = \mu^{-1}= \epsilon$, which implies $\epsilon^2= 1$.  Thus the parameter
  $\psi$ has the form
\[
\psi = \epsilon\otimes sp_n \oplus \epsilon\otimes sp_n
\]
In this  case we set $G_0=SO_1=\{1\}$. Therefore, we have an embedding 
\[
\call_\psi=
SO_1 \times \Gamma_{\Q_v}  \to GL_1(\C)\times \Gamma_{\Q_v}
\]
given by
\[
(1,\gamma)\mapsto (\epsilon(\gamma),\gamma)
\]
Now we set $\call_\psi= \{1\}\times \{1\}\times \Gamma_{\Q_v}$ and we have $\call_\psi  \times SL_2 \to \widehat{GL}_{2n}\times \Gamma_{\Q_v}$. Let $\widetilde{\psi}$ be the morphism such that
\[
\widetilde{\psi}(\gamma,s) = \epsilon(\gamma)\,sp_n(s)\oplus \epsilon(\gamma)\,sp_n(s)
\]
Then, the parameter $\widetilde{\psi}$ factorizes through $SO_{2n}(\C)= \dsonv \subset \widehat{GL}_{2n}$.
\end{enumerate} 

Now we calculate the groups $S_\psi$ and $\boldsymbol{\cals}_\psi$ in both cases. The calculation of the centralizer in the first case gives $S_\psi \simeq \C^\times$ and in the second, $S_\psi \simeq SL_2(\C)$. Next, in both  cases we find that
\[
 \boldsymbol{\cals}_\psi = \{1\} 
\]
From this follows that the element $\overline{s}_\psi \in \boldsymbol{\cals}_\psi$ is obviously $1$.

We denote now by $\widetilde{\Pi}(\son)$, the set of $Out_{\gnn}$-orbits in the set $\Pi(\son)$ of irreducible representations of $\son$. We denote by $\widehat{\Pi}_{\text{fin}}(\son)$, the set of formal finite linear combinations of elements of $\widetilde{\Pi}(\son)$. Any element $\pi \in \widehat{\Pi}_{\text{fin}}(\son)$ determines a linear form on $\widetilde{\calh}(\son)$: 
 \[
 f \mapsto f^{\GO}(\pi)\quad\quad \text{(its trace)}
 \]
 With all this data Arthur in his theorem $30.1$ (\cite{Ar2} page $246$) affirms that for any local field $\Q_v$, $G$ an elliptic twisted endoscopic group of $GL_{2n}$ and for every parameter $\psi \in \widetilde{\Psi}(G)$, there exists a stable linear form $f \to f^G(\psi)$  on $\widetilde{\calh}(G)$ such that, for $\phi, f$  associated functions
\[
\text{trace}\,(\Pi(\phi)I_\theta)= f^G(\psi).
\]
This theorem also establishes that for any parameter $\psi \in \widetilde{\Psi}(G)$, there exists a finite subset $\widetilde{\Pi}_\psi$ of the set of non-negative linear combinations of unitary representations of $G$ modulo the group of outer automorphisms (in our case isomorphic to $\Z/ 2\Z$) and an injective map
\begin{equation}\label{pigrandotapsi}
\pi \mapsto  \left<\,\cdot,\pi  \right>, \qquad \pi \in \widetilde{\Pi}_\psi
\end{equation}
from the set $\widetilde{\Pi}_\psi$ into the group of characters
$\widehat{\boldsymbol{\cals}}_\psi(G)$,  satisfying the following condition:
\begin{equation}\label{6eqult}
f^{\GO}(\psi) = \sum_{\pi \in \widetilde{\Pi}_\psi}
\left<s_\psi,\pi  \right> f_{G}(\pi), \quad f\in \widetilde{\calh}(G)
\end{equation}
where  $s_\psi$  is the element  defined by \eqref{elesepsi} and $f_G(\pi)= trace\,\pi(f)$.

The result in our case is the following:
\begin{teo}\label{7teo1ch}
We consider the split special orthogonal group $SO_{2n}$ and $\psi$, 
the parameter  associated to our representation $\pi= Ind_{GL_n}^{SO_{2n}}(|det|^s \otimes \epsilon)$. Then
\begin{enumerate}
\item There exists a stable linear form  $f \mapsto f^{\son}(\psi)$ on $\widetilde{\calh}(\son)$ such that 
\[
f^{\son}(\psi) \quad \text{is equal to the  trace$(\Pi(\phi)I_\theta) =$  trace$(\pi(f))$}
\]
where $\phi$, $f$ are associated in the sense of stable orbital integrals (\emph{cf.} section \ref{artasctele}).  
\item If $f \in \widetilde{\calh}(\son)$ is unramified, 
\[
f^{\son}(\psi)= \text{trace}\,\pi(f)= \text{trace}\,\pi'(f).
\]
In particular $\pi$ (which is isomorphic to  $\pi'$ in $\widetilde{\Pi}_\psi$) is the only unramified element, of $\widetilde{\Pi}_\psi$, with multiplicity $1$.
\end{enumerate}
\end{teo}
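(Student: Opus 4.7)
The plan is to combine Arthur's abstract Theorem 30.1 (cited just above the statement) with the explicit character identity proved in Section 5, exploiting the computation made immediately before the theorem that $\boldsymbol{\cals}_\psi = \{1\}$; this triviality forces the local Arthur packet to contain at most one element, which must then be identified with our representation.

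I will first apply Arthur's theorem to the parameter $\psi = \mu \otimes sp_n \oplus \mu^{-1} \otimes sp_n$. This yields a stable linear form $f \mapsto f^{\son}(\psi)$ on $\widetilde{\calh}(\son)$ such that $\text{trace}(\Pi(\phi)A_\theta) = f^{\son}(\psi)$ for associated $(\phi,f)$, together with a finite set $\widetilde{\Pi}_\psi \subset \widetilde{\Pi}(\son)$ and an injection $\pi_0 \mapsto \langle\,\cdot\,,\pi_0\rangle$ into the characters of $\boldsymbol{\cals}_\psi$ satisfying the endoscopic identity \eqref{6eqult}. Since $\boldsymbol{\cals}_\psi$ is trivial, by injectivity $|\widetilde{\Pi}_\psi| \leq 1$ and $\langle s_\psi, \pi_0 \rangle = 1$; so \eqref{6eqult} collapses, once the packet is shown to be nonempty, to $f^{\son}(\psi) = f_{\son}(\pi_0)$ for a unique $\pi_0$.

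For part (1), I feed Proposition \ref{art6prop71} into this framework. That proposition gives
\[
\text{trace}(\Pi(\phi)A_\theta) = \tfrac{1}{2}|2^n|_F \bigl(\Theta_{\son,\pi}(f) + \Theta_{\son,\pi'}(f)\bigr)
\]
for associated $(\phi,f)$. By Remark \ref{artobs6}, the normalizing factor $\tfrac{1}{2}|2^n|_F$ is absorbed into Arthur's convention for the local transfer factor; and the right-hand side is precisely Arthur's pairing $f_{\son}(\pi_0)$ for $\pi_0$ the Out$(\son)$-orbit $\{\pi,\pi'\} \in \widetilde{\Pi}(\son)$. This identifies the unique packet element with the orbit of $\pi$, establishes the nonemptiness of $\widetilde{\Pi}_\psi$, and gives assertion (1). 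For part (2), I specialize to unramified $f$: equation \eqref{artegalitetraces} provides $\text{trace}(\Pi(\phi)A_\theta) = \text{trace}\,\pi(f) = \text{trace}\,\pi'(f)$, which by (1) equals $f^{\son}(\psi)$; the multiplicity-one and uniqueness statements then follow from the injectivity of $\pi_0 \mapsto \langle\,\cdot\,,\pi_0\rangle$ into the trivial character group of $\boldsymbol{\cals}_\psi$.

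The main obstacle is the identification step — matching Arthur's abstract packet element with the concrete induced representation $\pi$. Proposition \ref{art6prop71} is the right tool, but carrying out the match requires careful reconciliation of normalizations (the Kottwitz--Shelstad transfer factor of Section 5 differs from Arthur's local convention by $\tfrac{1}{2}|2^n|_F$ per place, an issue handled by Remark \ref{artobs6}), and a verification that Arthur's pairing $f_{\son}(\pi_0)$ for the orbit $\{\pi,\pi'\}$ coincides with $\Theta_\pi(f) + \Theta_{\pi'}(f)$ (and, at unramified places, simply with $\text{trace}\,\pi(f) = \text{trace}\,\pi'(f)$).
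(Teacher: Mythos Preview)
Your proposal is correct and follows essentially the same approach as the paper: invoke Arthur's Theorem 30.1, use the triviality of $\boldsymbol{\cals}_\psi$ to collapse \eqref{6eqult} to a single term $f^{\son}(\psi)=f_{\son}(\pi_0)$, and then identify $\pi_0$ with the Out-orbit of $\pi$ via the character identity of Section~5 (Proposition~\ref{art6prop71}), specializing to \eqref{artegalitetraces} in the unramified case. The paper handles the normalization slightly more directly, writing $\tfrac{1}{2}(\text{trace}\,\pi(f)+\text{trace}\,\pi'(f))=\text{trace}\,\pi(f)$ by using that $f\in\widetilde{\calh}(\son)$ has Out-invariant stable orbital integrals (so $\text{trace}\,\pi(f)=\text{trace}\,\pi'(f)$), rather than phrasing it as an orbit-trace; but this is the same computation.
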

\begin{proof}
Part $1$ is the theorem $30.1$  of Arthur (\cite{Ar2}), in our case  $\boldsymbol{\cals}_\psi= \{1\}$. From part  \eqref{6eqult} we deduce that
\[
f^{\son}(\psi)= f_{\son}(\pi_0)
\]
where $\pi_0$ is the only element of  $\widetilde{\Pi}_\psi$. Combining this with the results of section $5$, we obtain that for $\phi$ and $f$ associated,
\[
f^{\son}(\psi):= trace\,(\Pi(\phi)I_\theta) \quad (\Pi= \Pi(\psi))
\]
\[
= \frac{1}{2}(\text{trace}\,\pi(f) + \text{trace}\,\pi'(f)) = \text{trace}\,\pi(f)= \text{trace}\,\pi'(f),
\]
since the orbital integrals of $f$ are stables under the outer automorphism. If $\phi$ (and thus $f$) is unramified, the equality
\[
\text{trace}\,\pi_0(f) = \text{trace}\,\pi'(f)
\]
shows that $\pi$, with multiplicity $1$, is the only  unramified element of this representation $\pi_0$.
\end{proof}

\begin{obs}\label{obscorreg}
\begin{enumerate}
\item As remarked by Colette M\oe glin to the author,  the norm into $\son$ is surjective. We have, in fact, characterized the  \emph{trace} $\pi_0(f)$ for every function $f \in \widetilde{\calh}(\son)$. This implies that $\pi_0= \pi= \pi'$ (with multiplicity $1$).
\item The calculation remains true in the real  place (or complex in the case of number fields). 
\end{enumerate}
\end{obs}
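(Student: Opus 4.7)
The plan is to address the two assertions of the remark separately. For (1), I first verify that the norm $\cala$ is surjective on strongly $\theta_0$-regular conjugacy classes. This is immediate from the description in Section 5: the map $\cala$ factors through $\G_m^n/W_{\son}\to \G_m^n/\Omega^{\theta_0}$ where $W_{\son}$ sits as an index-two subgroup of the extended Weyl group $\Omega^{\theta_0}=\mathfrak{S}_n\rtimes(\Z/2\Z)^n$, so every target orbit is the image of a source orbit (two generically). Combined with Lemma \ref{art6leme3}, which realises every strongly $\theta_0$-regular class by a representative $(g_1,1)\in L$, and Lemma \ref{artformanorma}, this gives surjectivity on the level of stable conjugacy classes. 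Invoking the Waldspurger transfer theorem for the twisted endoscopic pair $(\son,\gnn\rtimes\theta)$, this class-level surjectivity lifts to orbital integrals: for every $f\in\widetilde{\calh}(\son)$ one can find $\phi\in\calh(\gnn)$ with $\phi$ and $f$ associated in the sense of Subsection \ref{artasctele}.

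Next I combine this extension of the associated-function correspondence with the two identities we already have. Theorem \ref{7teo1ch}(1), applied to such a pair $(\phi,f)$, yields $f^{\son}(\psi)=\operatorname{trace}(\pi_0(f))$ on \emph{all} of $\widetilde{\calh}(\son)$, not merely on the image of the transfer map. Proposition \ref{art6prop71} rewrites the twisted trace on $\gnn$ as $\tfrac12|2^n|_F\bigl(\operatorname{trace}\pi(f)+\operatorname{trace}\pi'(f)\bigr)$. Since $\pmb{\cals}_\psi=\{1\}$ in both the regular and non-regular case analysed just before the statement, Arthur's identity \eqref{6eqult} reduces to $f^{\son}(\psi)=f_{\son}(\pi_0)$ with $\widetilde{\Pi}_\psi=\{\pi_0\}$ a single orbit. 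Putting these three pieces together, and absorbing the normalisation constant as in Remark \ref{artobs6}, gives the chain of equalities $\operatorname{trace}\pi_0(f)=\operatorname{trace}\pi(f)=\operatorname{trace}\pi'(f)$ for every $f\in\widetilde{\calh}(\son)$. Linear independence of characters of inequivalent $\mathrm{Out}(\son)$-orbits of irreducible admissible representations then forces $\pi_0=\pi=\pi'$ as elements of $\widetilde{\Pi}(\son)$, with packet multiplicity one.

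For (2) I will revisit each ingredient used in the proof at an archimedean place $v\in\{\R,\C\}$ and confirm its validity there. The holomorphy and non-vanishing of the local intertwining operator on $\pi_v$ was treated in Proposition \ref{hubunavez} in the archimedean setting via the Langlands quotient theorem and the quotient $\Gamma(s)/\Gamma(s+\sigma)$, which is holomorphic and non-zero for $\mathrm{Re}(s)>0$. Van Dijk's character formula \eqref{art6eq2}, its twisted analogue \eqref{art6eqvdj0}, and the discriminant identities \eqref{art6eqatn0}-\eqref{art6eqatn2} are formulated for arbitrary local fields. The transfer-factor computation of Section 5 uses only the split structure of $\gnn$ and $\son$, the vanishing of $H^1$ of induced tori $\prod\operatorname{Res}_{K_i/k}\G_m$ (Hilbert 90, equally valid for $k=\R$ or $\C$), and the Langlands pairing of Kottwitz-Shelstad, none of which depends on the residue characteristic. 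Arthur's local theorem 30.1 is stated uniformly for every local place, and the required real transfer for our twisted orthogonal-linear pair is due to Shelstad. Consequently, Theorem \ref{7teo1ch} and the argument of (1) go through verbatim at archimedean places.

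The main obstacle is not one of calculation but of citation: it is essential that Waldspurger-Shelstad transfer and Arthur's local theorem 30.1 are available in the precise twisted endoscopic setting $(\son,\gnn\rtimes\theta)$ and at every local place. Both are used as black boxes; once they are in place, the inequality $|t|<p^{1/2}$ (used already in Remark \ref{obsmatrixhecke}) guarantees that our explicit parameter $\psi=\mu\otimes sp_n\oplus\mu^{-1}\otimes sp_n$ has unambiguous trace behaviour on strongly $\theta_0$-regular support, which is all that the extended correspondence needs.
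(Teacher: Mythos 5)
Your reconstruction matches the paper's implicit argument: the surjectivity of the norm from $(\gnn,\theta_0)$ onto the stable regular classes of $\son$ (K--S totality of $\cala$, read in the opposite direction) lets the trace identity of Theorem \ref{7teo1ch} be tested on all of $\widetilde{\calh}(\son)$ rather than only on unramified functions, and linear independence of characters on $\widetilde{\Pi}(\son)$ then forces $\pi_0=\pi=\pi'$ as a single $\mathrm{Out}(\son)$-orbit with multiplicity one; your archimedean checklist for part (2) is the correct list of verifications. The only imprecision is attributing the \emph{inverse} transfer $\widetilde{\calh}(\son)\to\calh(\gnn)$ to the ``Waldspurger transfer theorem''---Waldspurger's result provides the map $\phi\mapsto f$, whereas what you actually invoke is surjectivity of that map onto $\widetilde{\calh}(\son)$, which follows from surjectivity of the norm together with inversion of twisted orbital integrals and is part of the package assumed with Arthur's chapter $30$, so it should be cited as such rather than as Waldspurger's theorem.
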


\subsection{Local irreducibility}
The author thanks Colette M\oe glin deeply for her help in the argument given below. Once we have studied the local objects defined by Arthur, a natural question is whether the local representation associated to our parameter is irreducible.

Arthur's conjectures lead us to consider representations 
\[
W_{\Q_v} \times SL_{2}(\C) \to  SO_{2n}(\C)
\]
having the form
\[
\chi \otimes sp_n \oplus \chi^{-1} \otimes sp_n,
\]
where $\chi$ is an unramified character. If $\chi$ is unitary and $\chi^2 \neq 1$ or $\chi = |\,|^s$ with $s$ real and small, the induced representation is irreducible according to well known results. If $\chi$ has order $2$ ($\chi = \epsilon$ in our notations), then the calculation in the preceding subsection shows that the associated centralizer $S_\psi$  is connected. In that case, Arthur shows that the induced representation has to be irreducible. On the other hand, Langlands gives the result for the parabolic induction, when the inducing representation is a discrete series. There are well known results, in this case, to determine whether the representation
\[
Ind^{\son}_{GL_n}(\delta\otimes s):= Ind^{\son}_{GL_n}(\delta \otimes |det|^s), \quad s \in i\R,
\]
induced from the maximal parabolic subgroup $GL_n$ of $\son$, is irreducible (see for instance \cite{Wal0}). These criteria of irreducibility are:
\begin{enumerate}
\item If $(\delta \otimes s) \not\simeq w_0(\delta \otimes s)$, where $w_0$ is the element of maximal length in 
the Weyl group $W_{\son}$, then the induced representation is irreducible.
\item If $(\delta \otimes s) \simeq w_0(\delta \otimes s)$, consider $s+t$ ($s\in i\R,\,t\in \R$ and $t >0$) and  consider the intertwining operator $A(w_0,t)$
\[
\xymatrix{
Ind^{\son}_{GL_n}(\delta\otimes s \otimes w_0 t) \ar[r]^{A(w_0,t)} & Ind^{\son}_{GL_n}(\delta
\otimes (s+t))
}
\]
If the intertwining operator $A(w_0,t)$ has a pole at  $t=0$, then the induced representation is irreducible.
\end{enumerate}

In our case the inducing representation is not a discrete series.  We induce from the parabolic subgroup containing $M=GL_n$ as Levi subgroup, the representation $Ind(triv\otimes s)= \pi$, with the additional assumption that $\pi$ is everywhere unramified. This implies that all local representations $\pi_v$ are unramified. So this representation is a representation of the Hecke algebra $\calh(\son,I)$, where $I$ is the Iwahori subgroup of $\son$.  We consider now a particular involution of this algebra $h \mapsto h^*$,  $h \in \calh(\son,I)$. This involution has been studied by Marie-Anne Aubert in  \cite{Aub}. One of the main results in this paper is that a representation of this algebra is irreducible if and only if the representation transformed by the involution is also irreducible. It is known that this involution permutes the  trivial representation with the Steinberg representation $St$. We use this involution and we study the Steinberg representation as an induced representation.

The Steinberg representation is embedded in a representation induced from the principal Borel subgroup:
\[
 St \hookrightarrow Ind_B^{GL_{n}}(|\;|^{\frac{n-1}{2}},\dots, |\;|^{\frac{1-n}{2}})
\]
Thus, we have a diagram 
\[
\xymatrix{ Ind^{SO_{2n}}_{GL_n}(St\otimes \epsilon |\;|^{s}) \ar[d]^{A_{w_0}} \ar @{^{(}->}[r] & Ind_B^{SO_{2n}}(|\;|^{\frac{n-1}{2}+s+\epsilon},\dots, |\;|^{\frac{1-n}{2}+s+\epsilon}) \ar[d]^{A_w} \\ Ind(St\otimes \epsilon |\;|^{-s}) \ar @{^{(}->}[r] & Ind_B^{SO_{2n}}(|\;|^{\frac{n-1}{2}-s+\epsilon},\dots, |\;|^{\frac{1-n}{2}-s+\epsilon})
}
\]
where $A_w$ is the "long" intertwining operator  for $B\subset SO_{2n}$.

The operator $A_w$ sends the elements $(x_1,\dots, x_n)$ to $(x_n^{-1}, \dots, x_1^{-1})$. This operator is a product of operators realized in $GL_2$ or $SO_4$. The case of $GL_2$ being simple, we consider only the case of $SO_4$. In that case we have a diagram
\[
\xymatrix{ Ind^{SO_4}_{GL_2}(St\otimes \epsilon |\;|^{s}) \ar[d]^{A_{SO_4}} \ar @{^{(}->}[r] & Ind_{B}^{SO_{4}}(|\;|^{1/2}, |\;|^{-1/2}\otimes \epsilon |\;|^s) \ar[d]^{A_{w,SO_4}} \\ Ind^{SO_4}_{GL_2}(St\otimes \epsilon |\;|^{-s}) \ar @{^{(}->}[r] & Ind_{B}^{SO_{4}}(|\;|^{1/2}, |\;|^{-1/2}\otimes \epsilon |\;|^{-s})
}
\]
The letter $B$ now denotes the usual Borel subgroup of $SO_4$. On the other hand, we know that $SL_2\times SL_2/\pm 1 \simeq SO_4 \supset GL_2 \simeq SL_2\times \G_m/\pm 1$. Thus the problem is reduced to considering the intertwining operator 
\[
Ind_{\calb}^{PGL_2}(\epsilon |\;|^{s}) \to Ind_{B_2}^{PGL_2}
\]
where $\calb$  Borel subgroup of  $PGL_2$. Now we are in Langlands' conditions and we know that this operator has a pole, since it is associated to a quotient of $L$-functions 
\[
 \frac{L(\epsilon^2,2s)}{L(\epsilon^2, 2s+1)}
 \]
with $\epsilon^2=1$, which has a pole. Since the operator $A_{SO_4}$ coincides, on the spherical vector, with the operator $A_{w_0}$, we deduce that it has a pole and therefore the representation $St$ is irreducible. In view of Aubert's result, we obtain finally that $\pi= Ind_{GL_n}^{SO_{2n}}(\epsilon \circ det)$ is irreducible.
 \begin{obs}
 We have the same result in the real place for unramified parameters.
 \end{obs}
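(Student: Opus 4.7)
The plan is to transport the $p$-adic argument of the preceding subsection to the real place almost verbatim, making two substitutions: the Aubert--Schneider--Stuhler involution is replaced by the Vogan--Zuckerman duality on Harish-Chandra modules, and the ratios of $p$-adic $L$-functions are replaced by ratios of archimedean local factors (essentially $\Gamma$-functions). Vogan's duality preserves irreducibility and, restricted to the relevant unramified infinitesimal character, exchanges the trivial representation of $SO_4$ with the (limit of) discrete series playing the role of the Steinberg module. It therefore suffices to prove that $\mathrm{Ind}_{GL_n}^{SO_{2n}}(\mathrm{St}\otimes \epsilon\circ\det)$ is irreducible at $\R$.

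I would then embed $\mathrm{St}$ into the full principal series $\mathrm{Ind}_B^{GL_n}(|\,|^{(n-1)/2},\dots,|\,|^{(1-n)/2})$ and study the long intertwining operator $A_{w_0}$ acting on the induced representation of $SO_{2n}$. Exactly as in the $p$-adic case, $A_{w_0}$ factors as a product of rank-one intertwining operators realized in $GL_2$ or $SO_4$; the $SO_4$ factors reduce, via the isogeny $SL_2\times SL_2/\{\pm 1\}\simeq SO_4$, to intertwining operators on $PGL_2(\R)$ applied to the principal series induced from $\epsilon|\,|^s$. The Gindikin--Karpelevich calculation supplies the archimedean analogue of the Langlands $L$-quotient, namely $L_\infty(\epsilon^2,2s)/L_\infty(\epsilon^2,2s+1)$, which for $\epsilon^2 = 1$ reduces essentially to $\Gamma_{\R}(2s)/\Gamma_{\R}(2s+1)$ and manifestly has a pole at $s=0$.

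The presence of this pole, together with the fact that on the spherical vector the operator $A_{SO_4}$ coincides with the relevant rank-one factor of $A_{w_0}$, forces $\mathrm{Ind}_{GL_n}^{SO_{2n}}(\mathrm{St}\otimes \epsilon)$ to be irreducible by the same Langlands-type criterion invoked in the $p$-adic case. Applying Vogan's involution then yields the irreducibility of $\mathrm{Ind}_{GL_n}^{SO_{2n}}(\epsilon\circ\det)$ itself, completing the argument.

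The main obstacle is not the $\Gamma$-function computation, which is entirely routine, but rather verifying that the archimedean involution interacts compatibly with parabolic induction from the non-minimal Levi $GL_n\subset SO_{2n}$ and carries the trivial module, block by block, to the expected discrete-series type module. Once this compatibility is in place, every remaining step is a direct translation of the $p$-adic proof, with $\Gamma$-factors replacing $L$-factors and Vogan--Zuckerman duality replacing Aubert duality.
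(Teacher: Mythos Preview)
The paper supplies no proof for this remark; it is stated as a bare observation immediately after the $p$-adic argument, presumably meaning that the author regards the archimedean case as a routine transposition of the same reasoning. So there is no ``paper's own proof'' to compare against, and your proposal is effectively an attempt to make that transposition explicit.

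Your outline is the natural one and is almost certainly what the author had in mind: replace the Aubert involution by the archimedean duality on Harish-Chandra modules, and replace the local $L$-quotient $L(\epsilon^2,2s)/L(\epsilon^2,2s+1)$ by its $\Gamma$-factor analogue, which indeed has the required pole at $s=0$. The reduction through $SO_4\simeq (SL_2\times SL_2)/\{\pm 1\}$ to $PGL_2(\R)$ goes through verbatim, and the Gindikin--Karpelevich computation on the spherical vector is standard.

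You are right to flag the compatibility of the archimedean involution with parabolic induction from $GL_n\subset SO_{2n}$ as the one non-formal point. In the $p$-adic case Aubert's involution is defined on the full Grothendieck group and its behavior under induction is part of her theorem; in the real case the analogous statement (that the dual of $\mathrm{Ind}_{GL_n}^{SO_{2n}}(\mathrm{triv})$ is $\mathrm{Ind}_{GL_n}^{SO_{2n}}(\text{``Steinberg''})$, and that the latter is the correct discrete-series-type module) requires either invoking the general compatibility of Vogan's duality with real parabolic induction, or a direct check on the Langlands parameters involved. This is genuinely a point to be written out, not merely asserted. An alternative, and arguably cleaner, route at the real place is to bypass the involution entirely and appeal directly to the Speh--Vogan reducibility criteria for generalized principal series of real reductive groups, which handle $\mathrm{Ind}_{GL_n}^{SO_{2n}(\R)}(\chi\circ\det)$ for $\chi$ unitary without the detour through the Steinberg module.
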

 These calculations allow us to conclude this proposition:
 \begin{prop}\label{moeirrerep}
 The local representation $Ind_{GL_n}^{SO_{2n}}(\chi)$ ($\chi$ unitary or $\chi = |\,|^s$, $s$ real and small) associated to parameters of  type $\chi\otimes sp_n$ for the split group $SO_{2n}$ is irreducible.
 \end{prop}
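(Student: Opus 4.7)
The plan is to split the argument along the structure of $\chi$. When $\chi$ is unitary with $\chi^2 \neq 1$, or $\chi = |\,|^s$ with $s$ real and small nonzero, the induced character on $GL_n$ is in sufficiently general position that classical irreducibility criteria for parabolic induction from a maximal parabolic (Harish-Chandra, Silberger, Casselman) apply directly; the reducibility locus on the complex line is discrete and does not meet this range. The genuinely delicate case is $\chi^2 = 1$, i.e.\ $\chi = \epsilon$ a quadratic character, where $Ind_{GL_n}^{SO_{2n}}(\epsilon \circ \det)$ sits exactly at a reducibility point for non-tempered unramified induction.

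For this remaining case I see two routes. The first is conceptual: the computations leading up to Theorem \ref{7teo1ch} show that for $\psi = \epsilon \otimes sp_n \oplus \epsilon \otimes sp_n$ the centralizer $S_\psi \simeq SL_2(\C)$ is connected, hence $\boldsymbol{\cals}_\psi$ is trivial and the local Arthur packet contains a single element with multiplicity one, forcing irreducibility. The second route, which avoids a circular appeal to Arthur's conjectures, is to pass to the Hecke algebra $\calh(\son, I)$ of Iwahori-fixed vectors and use the involution $h \mapsto h^*$ of Aubert, which preserves irreducibility and exchanges the trivial and Steinberg representations of the $GL_n$-Levi.

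Under Aubert's involution, the problem reduces to deciding irreducibility of $Ind_{GL_n}^{SO_{2n}}(St \otimes \epsilon|\,|^s)$. I would embed $St$ into $Ind_{B_n}^{GL_n}(|\,|^{(n-1)/2}, \ldots, |\,|^{(1-n)/2})$, obtaining a commutative square involving the long intertwining operator $A_w$ of the full principal series of $\son$. Factor $A_w$ into a product of rank-one operators realised in $GL_2$ and $SO_4$ subgroups; the $GL_2$ factors are standard, so the entire analysis concentrates on the $SO_4$ contribution. Using the local isogeny $SL_2 \times SL_2/\{\pm 1\} \simeq SO_4$ and the inclusion $GL_2 \simeq SL_2 \times \G_m/\{\pm 1\} \subset SO_4$, this reduces further to an intertwining operator on $PGL_2$ attached to an induced representation of $\epsilon|\,|^s$, which by Langlands' explicit formula is proportional to $L(\epsilon^2, 2s)/L(\epsilon^2, 2s+1)$. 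Since $\epsilon^2 = 1$, this ratio has a pole at the relevant point, and the standard reducibility/irreducibility dichotomy, based on the order of the long intertwiner on the spherical vector, then forces the Steinberg-induced representation to be irreducible; Aubert's involution transports the conclusion back to the original representation.

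The main obstacle is this $\chi = \epsilon$ case: everything rests on showing that the residual $SO_4$-factor of the long intertwining operator really is singular at $s = 0$, which is why the reduction to $PGL_2$ via the isogeny is essential. The archimedean case follows by the analogous argument, replacing the ratio of local $L$-factors by the corresponding ratio of $\Gamma$-functions, which has the same pole pattern.
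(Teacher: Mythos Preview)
Your proposal matches the paper's proof essentially line for line: the case split on $\chi$, the mention of the connected-centralizer route via Arthur, the Aubert involution swapping the trivial representation for the Steinberg, the embedding into the full principal series with the long intertwiner, the reduction of the $SO_4$ factor to $PGL_2$ via the isogeny, and the pole of $L(\epsilon^2,2s)/L(\epsilon^2,2s+1)$ at $s=0$ giving irreducibility of the Steinberg-induced representation. The archimedean remark is also the same.
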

 
 
 \section{Global results}
 In this section we establish some consequences in the case of the group $\son$ for the parameter we have studied. We remain in the case everywhere unramified.
 
 We consider our parameter of section $2$, $\psi \in\widetilde{\Psi}_2(\son)$: it is a global square  integrable parameter of $\son$. We set $\psi_\infty$ the archimedian component, and let $f \in  \widetilde{\calh}(\son)$, a function everywhere unramified. For this function we have also a decomposition $f = (f_\infty, f_p)$, where all the $f_p$ are unramified. If we consider the calculation of the trace for $f$,  we find (see also \cite{Ar2}, page $247$) that:
 \[
\text{trace}\,\left(f| L^2_{\psi_\infty}\right)=\sum_{\substack{\pi \in L^2_{\text{disc}}\\ \omega_\pi = \omega_{\psi_\infty}}} \,\text{trace}\,\pi(f)= \sum_{\substack{\psi^\prime \neq \psi\\ \psi^\prime_\infty = \psi_\infty}} m_\psi \bigotimes_{\pi \in \calb_\psi} \pi_v + 2 \prod_v \text{trace}\, (\otimes \pi_v) (f)
\]
We consider a finite set of places $S$ and $f_S$, any  function associated to this set $S$.  We verify by varying the function $f_S$ and then the set $S$, that all the previous terms are linearly independent from last term in the previous equation.  This implies that the trace is simply
\[
\text{trace}\,\left(f | L^2_{\psi_\infty, \psi_S}\right)= 2 \prod_v \text{trace}(\otimes \pi_v)(f)
\]
Let $\pi$ be the residual representation of $\son$ of multiplicity one in $L^2_{\psi_\infty}$, constructed in section $2$. Let $\alpha' = \pi^\alpha$ be the image of $\pi$ under the outer automorphism $\alpha$. By transport of structure, the representation $\pi'$ is also residual, with multiplicity one but these representations are not isomorphic (Langlands theorem of disjunction for parabolic subgroups). Moreover, we know that, locally, the Hecke matrices of these two representations are different (\emph{cf}. remark \ref{obsmatrixhecke}). Consequently,
\[
\text{trace}\,(f| L^2_{\psi_\infty,\psi_S})= \sum_{\substack{\pi \in
    L^2_{\text{disc}}\\ \text{type $\psi_\infty,\psi_S$}}}
\text{trace}\,\pi (f)= \text{trace}\,\pi(f)+\text{trace}\,\pi'(f) +
\sum\,' \,\text{trace}\,\rho(f)
\]
\[
= 2\prod_v \text{trace}\, \pi(f)
\]
where $\rho$ denotes the representations of $\widetilde{\calh}(\son)$,  different from $\pi$, that might appear with the type $\psi$. Since trace $\pi =$ trace $\pi'$, on $\widetilde{\calh}(\son)$, the previous identity, true for all the unramified functions, shows that the sum $\Sigma'$ is empty. Thus we have this result.

\begin{teo}\label{8teo1ja}
Assume  the global and local results of Arthur, chapter
30 \cite{Ar2}. There exist only two representations of type  $\psi$
(for our fixed parameter $\psi$) in the space $L^2\left(SO_{2n}(\Q)
  \backslash SO_{2n}(\A_\Q)\right)$. These are the residual representations  $\pi$ and $\pi'= \pi^\alpha$ where $\alpha$ is the  outer automorphism  of $SO_{2n}$ (notations as above). In particular, there is no cuspidal representation of  $SO_{2n}(\A_\Q)$ of type $\psi$.
\end{teo}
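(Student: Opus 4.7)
The plan is to bootstrap the multiplicity formula of Arthur together with the multiplicity one statement for $\pi$ (Theorem \ref{tchin2}) and the character identity of Theorem \ref{arteo6chign1}, and then extract the conclusion by varying test functions.

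First I would decompose the discrete spectrum along Arthur parameters. Fix an unramified function $f=\bigotimes_v f_v \in \widetilde{\calh}(\son)$ and compute the trace of $f$ acting on $L^2_{\psi_\infty}$. By Arthur's spectral expansion, this trace is a sum over global square-integrable parameters $\psi'$ with $\psi'_\infty = \psi_\infty$, each contributing (through the multiplicity formula and local packet data of Section 6) a weighted trace $m_{\psi'} \prod_v \operatorname{trace}\pi_v(f_v)$. For our specific $\psi = \mu\otimes sp_n \oplus \mu^{-1}\otimes sp_n$, the local calculations in Section 6 show that every $\boldsymbol{\cals}_{\psi_v}$ is trivial, so the local packets are singletons and only the global multiplicity coefficient remains; the factor of $2$ on the $\psi$-contribution arises because the outer automorphism $\alpha$ acts non-trivially on the class of $\pi$ (cf.\ Remark \ref{obsmatrixhecke}).

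Next I would isolate the $\psi$-contribution. Choose a finite set $S$ of places and let $f_S$ range over Hecke-algebra elements supported at $S$; by varying $f_S$ and then $S$, the traces $\prod_v \operatorname{trace}\pi_v(f)$ attached to distinct global parameters $\psi'\neq \psi$ sharing the fixed local data are linearly independent as functions of $f$ (standard consequence of strong multiplicity one for $GL_{2n}$ applied to the images of $\psi'$). This forces
\[
\operatorname{trace}\bigl(f\,\big|\,L^2_{\psi_\infty,\psi_S}\bigr) = 2\prod_v \operatorname{trace}\pi_v(f).
\]
On the other hand, the left-hand side decomposes as the sum over all representations of $\widetilde{\calh}(\son)$ of type $\psi$ in the discrete spectrum: it contains $\operatorname{trace}\pi(f) + \operatorname{trace}\pi^\alpha(f)$ from the two residual representations produced by Theorem \ref{tchin2}, plus a possible remainder $\sum' \operatorname{trace}\rho(f)$.

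Finally I would close the argument by exploiting the $\alpha$-invariance of $\widetilde{\calh}(\son)$: since the orbital integrals of $f\in\widetilde{\calh}(\son)$ are $\operatorname{Out}(\son)$-invariant, one has $\operatorname{trace}\pi(f) = \operatorname{trace}\pi^\alpha(f)$ for every unramified $f$, so the first two contributions already equal $2\prod_v\operatorname{trace}\pi_v(f)$. Matching against the formula above forces $\sum' \operatorname{trace}\rho(f) = 0$ for every such $f$, and hence (by linear independence of characters on the unramified Hecke algebra) the sum is empty. Thus no representation other than $\pi$ and $\pi^\alpha$ of type $\psi$ occurs in $L^2\bigl(\son(\Q)\backslash\son(\A_\Q)\bigr)$, and since both are residual by construction, no cuspidal representation of type $\psi$ exists. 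The delicate point is invoking Arthur's multiplicity formula with the correct factor of $2$ coming from the outer automorphism action on the global packet for $\son$; everything else reduces to the linear-independence argument and the character identity already established in Section 5.
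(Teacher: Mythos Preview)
Your proposal is correct and follows essentially the same argument as the paper: decompose the trace on $L^2_{\psi_\infty}$ via Arthur's spectral expansion, isolate the $\psi$-term by varying $f_S$ over finite sets $S$ to obtain $\operatorname{trace}(f\mid L^2_{\psi_\infty,\psi_S})=2\prod_v\operatorname{trace}\,\pi_v(f)$, then match this against $\operatorname{trace}\,\pi(f)+\operatorname{trace}\,\pi^\alpha(f)+\sum'\operatorname{trace}\,\rho(f)$ and use $\operatorname{trace}\,\pi(f)=\operatorname{trace}\,\pi^\alpha(f)$ on $\widetilde{\calh}(\son)$ to force $\sum'=0$. Your invocation of strong multiplicity one for $GL_{2n}$ to justify the linear-independence step makes explicit what the paper leaves as a one-line remark.
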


Finally, we can  ameliorate the bounds of the eigenvalues for the  Hecke operators given in section $4$. Using these estimates we deduce now the following theorem:
\begin{teo}\label{8teo2ver}
Let $\pi$ be a cuspidal representation, unramified at every place $v$, of $SO_{2n}(\A_\Q)$. Denote by $\pi_v$, the local representations. Suppose that Arthur's conjectures are verified for $\son$, i.e., the results announced in his article \cite{Ar2}, chapter $30$. Then
\begin{enumerate}
\item The Hecke matrix $t_{\pi_v}= \text{diag}(t_1, \dots, t_n,
  t_n^{-1}, \dots, t_1^{-1})$ of $\pi_v$ (the local component),
  satisfies in every finite place the estimates
\[
|t^{\pm 1}_i| \le p^{\frac{n}{2} -1 +\epsilon(n)},
\]
with $\epsilon(n)= \frac{1}{n^2+1}$.
\item At the archimedian place, we have also
\[
\pi_\infty \hookrightarrow ind_{B_{\son}}^{\son}(\chi_1,\dots,
\chi_n)\quad \text{(subquotient)}
\]
where $\chi_i:\R^\times \to \C^\times$, satisfies $\chi(x)=
|x|^{s_i}$, $|s_i| \le \frac{n}{2}-1 +\epsilon(n)$, $\epsilon(n)$ defined as above. We have thus the same type of bounds as for the finite places.
\end{enumerate}
\end{teo}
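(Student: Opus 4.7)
The plan is to combine Arthur's conjectural parametrization of the discrete spectrum (which fixes the block structure of the unramified Satake parameters), the block-size restriction coming out of Sections 3 and 7, and the Luo--Rudnick--Sarnak approximation to Ramanujan for $GL_d$-cuspidals at each block.

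First, by Arthur's conjectures, an everywhere-unramified cuspidal $\pi$ corresponds to a discrete parameter
\[
\psi = \BIGboxplus_i \mu_i \boxtimes sp_{k_i},
\]
where each $\mu_i$ is a self-dual unitary cuspidal automorphic representation of some $GL_{d_i}(\A_\Q)$, $\sum_i d_i k_i = 2n$, and each block $\mu_i \otimes sp_{k_i}$ is of orthogonal type so that $\psi$ factors through $\dson(\C)$. At an unramified place $p$ the eigenvalues of the Satake parameter $t_{\pi_p}$ are precisely the numbers $\alpha_{i,j}\, p^{(k_i-1)/2 - \ell}$ with $0 \le \ell \le k_i - 1$, where the $\alpha_{i,j}^{\pm 1}$ range over the Satake eigenvalues of $\mu_{i,p}$.

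Second, the block sizes are controlled: Theorem \ref{teosingexct} shows any block $sp_k$ of odd size $k > n+1$ forces singularity in Howe's sense and hence excludes cuspidality. An extension of the same singularity argument, using induction-in-stages together with Theorem \ref{8teo1ja} to treat the boundary case and the parity constraint that $\mu_i \otimes sp_{k_i}$ be of orthogonal type in $\son$, yields $k_i \le n-1$ for every block of a cuspidal parameter.

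Third, one splits by block size. When $k_i = n-1$ (necessarily odd since $n$ is even), orthogonality forces $\mu_i$ to be of orthogonal type, and the constraint $d_i k_i \le 2n$ leaves only $d_i \in \{1,2\}$: a quadratic Hecke character or a dihedral cuspidal $GL_2$-representation, both of which satisfy Ramanujan, so $|\alpha_{i,j}| = 1$ and the block contributes eigenvalues of size at most $p^{(n-2)/2}$. When $k_i \le n-2$ and $d_i \le n$, Luo--Rudnick--Sarnak at $GL_{d_i}$ gives $|\alpha_{i,j}| \le p^{1/2 - 1/(d_i^2+1)} \le p^{1/2 - 1/(n^2+1)}$, yielding block eigenvalues bounded by $p^{k_i/2 - 1/(n^2+1)} \le p^{n/2 - 1 - 1/(n^2+1)}$; the remaining case $d_i > n$ forces $k_i = 1$, making the contribution trivially bounded by $p^{1/2}$. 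Combining all three regimes yields $|t_j^{\pm 1}| \le p^{n/2 - 1} \le p^{n/2 - 1 + \epsilon(n)}$ with $\epsilon(n) = 1/(n^2+1)$, proving (1). Part (2) follows from the strictly parallel archimedean argument, applied to $\psi_\infty$ with the same block decomposition and the archimedean form of the Luo--Rudnick--Sarnak bound on the real parts of the Langlands exponents of each $\mu_{i,\infty}$.

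The principal obstacle is the second step: Theorem \ref{teosingexct} as stated only handles blocks $sp_k$ with trivial (or character) coefficient and $k$ odd, whereas a general cuspidal parameter carries arbitrary self-dual $\mu_i$ and blocks of both parities. One must synthesize the singularity argument of Section 3 with the orthogonal/symplectic type constraints and the boundary-case elimination of Theorem \ref{8teo1ja} in order to push the uniform cutoff down to $k_i \le n-1$; this assembly is the real technical content underlying the improved estimate.
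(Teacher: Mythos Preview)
Your proposal is correct and follows precisely the route the paper intends. Note, however, that the paper gives no proof of this theorem at all: it is stated immediately after Theorem~\ref{8teo1ja} with only the sentence ``Using these estimates we deduce now the following theorem,'' so what you have written is in fact a reconstruction of the argument the paper leaves implicit.

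Your flagged ``principal obstacle'' is genuine but resolves exactly as you outline, and it is worth recording why. For a discrete Arthur parameter of $\son$ every block $\mu_i\otimes sp_{k_i}$ must be of orthogonal type with multiplicity one. Now:
\begin{itemize}
\item If $k_i\ge n+2$ then $d_ik_i\le 2n$ forces $d_i=1$; when $k_i$ is odd this is exactly the situation of Section~3 (Corollary~\ref{sonclmio8} for $k_i=n+1$ and Theorem~\ref{teosingexct} for $k_i>n+1$), while when $k_i$ is even the block is symplectic and cannot occur at all.
\item If $k_i=n+1$ (odd), again $d_i=1$ and Section~3 applies.
\item If $k_i=n$ (even, so $sp_n$ symplectic), then $d_i\le 2$; the case $d_i=1$ is excluded by parity, and the case $d_i=2$ with $\mu_i$ symplectic is precisely the parameter of Section~2, eliminated from the cuspidal spectrum by Theorem~\ref{8teo1ja}.
\end{itemize}
Thus $k_i\le n-1$ for every block, and your case analysis in step three goes through. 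Your observation that the resulting bound is actually $p^{n/2-1}$ (hence slightly sharper than the stated $p^{n/2-1+\epsilon(n)}$) is also correct; the dihedral case $k_i=n-1$, $d_i=2$ is in fact empty over $\Q$ under the everywhere-unramified hypothesis, since a dihedral $GL_2$-cuspidal over $\Q$ is always ramified somewhere.
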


\begin{obs}
Finally, we remark, that we worked all the time over  $\Q$ for simplifying the  notations. The previous theorems remain true, under the same hypothesis, for any number field $F$.
\end{obs}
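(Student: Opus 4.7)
The plan is to inspect each of the previous theorems in turn and verify that replacing $\Q$ by an arbitrary number field $F$ introduces no new obstruction. The observation is essentially that nothing in the arguments used $\Q$-specific features (such as class number one, a unique archimedean place, or the explicit ring $\Z$), so the proof will consist mainly of a checklist rather than new mathematics. First I would handle the construction of section 2: the split form of $SO_{2n}$ is defined over $F$ by the same quadratic form, the standard parabolic $P=MU$ with $M\simeq\prod^m GL_2$ is defined over $F$, and for any cuspidal automorphic representation $\pi$ of $GL_2(\A_F)$ the twisted representation $\pip=\bigotimes \pi\otimes|\det|^{s_i}$ of $M(\A_F)$ is well-defined. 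The Eisenstein series \eqref{eqse0} is formed the same way with $\gamma\in P(F)\backslash G(F)$; convergence, constant-term calculation via Langlands, and the description \ref{pcm} of the intertwining operator in terms of quotients of $L^S$-functions are all formal consequences of the reduction theory and the unramified Satake isomorphism, which hold over any number field. The location of poles, the holomorphy statement \ref{hubunavez}, and Kato's theorem, are local statements at places of $F$, so they carry over verbatim. Multiplicity one then follows as in Theorem~\ref{tchin2}.

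Next I would check section 3: the Mackey-theoretic decomposition, the Howe/Li singularity argument in Proposition~\ref{sonmiodhl1}, the description of the set $B$ of roots and Corollary~\ref{sonclmio8} are purely local in nature, with the global step (Proposition~\ref{howelemma}) being Howe's ``local-implies-global'' principle. Howe's result is formulated over general number fields (his original setting already allows arbitrary global fields of characteristic zero), and Li's theorem on Fourier coefficients of cuspidal forms on classical groups is also stated for arbitrary number fields. The extension to the non-unitary case uses only the Dixmier decomposition and weak containment, both of which hold for any reductive group over any local field. Section 4 (Hecke operators) is again place-by-place: the Satake isomorphism, the formula \eqref{ophck02} of Kottwitz for the minuscule coweight, and the bounds of Luo--Rudnick--Sarnak and of Kim--Sarnak towards Ramanujan for cuspidal representations of $GL_2$ are all valid over any number field, with the exponent $\epsilon(n)$ unchanged.

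For section 5, the calculation of ordinary and twisted characters and the explicit description of the norm map $\cala\colon\mathrm{Cl}_{ss}(SO_{2n})\to\mathrm{Cl}_{ss}(GL_{2n},\theta_0)$ depend only on the algebraic structure of the groups and on the Kottwitz--Shelstad formalism, which is set up over any local field of characteristic zero. The calculation $\Delta_I=\Delta_{II}=\Delta_{III}=1$ and the reduction $\Delta=\Delta_{IV}$ use Hilbert 90 (hence the triviality of $H^1(k,T)$ for induced tori, valid at any place of $F$) and the Tate--Langlands duality, both of which are insensitive to the particular choice of number field. Sections 6 and 7 invoke Arthur's theorems of \cite{Ar2}, which are announced over arbitrary number fields; the local representation theory (irreducibility via Aubert's involution, reduction to the principal series computation of the intertwining operator at $s=0$, triviality of the centralizer group $\boldsymbol{\cals}_\psi$) is all local at a place $v$ of $F$, and the global trace-formula argument in Theorem~\ref{8teo1ja} uses only the linear independence of matrix coefficients at a varying finite set $S$ of places, an argument which is independent of the base field.

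The only potentially delicate point, and the one I would flag as the main step to verify carefully, is that the adelic input functions at the archimedean places of $F$ must be handled uniformly when $F$ has several archimedean places (real and complex). Here Remark~\ref{obscorreg}(2) already notes that the character identity of section 5 and the local analysis of section 6 extend to both the real and complex archimedean cases; once this is granted, the global identity $\mathrm{tr}\,(f\mid L^2_{\psi_\infty,\psi_S})=2\prod_v\mathrm{tr}\,\pi_v(f)$ goes through unchanged, and Theorems~\ref{8teo1ja} and \ref{8teo2ver} follow exactly as in the $\Q$ case. No further argument is required.
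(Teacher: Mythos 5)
The paper states this remark without proof, so there is no ``paper's own argument'' to compare against; your section-by-section checklist is exactly the kind of verification the remark tacitly presupposes, and it is sound. You correctly identify that the only genuine subtlety is the presence of several archimedean places (real and complex) when $F\neq\Q$, and you rightly observe that Remark~\ref{obscorreg}(2) already covers the local archimedean character identity, after which the global trace argument of Theorem~\ref{8teo1ja} is insensitive to the base field.
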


\bigskip

\begin{flushleft}
Octavio Paniagua Taboada \\
Mathematisches Institut \\
George-August-Universit\"at G\"ottingen\\
Bunsenstra\ss e 3-5\\
D-37073 Göttingen\\
Germany \\\bigskip
E-mail address:\\
{\tt paniagua@uni-math.gwdg.de}
\end{flushleft}

\end{document}